\newtheorem{thm}{Theorem}[section]
\newtheorem{prop}[thm]{Proposition}
\newtheorem{lem}[thm]{Lemma}
\newtheorem{cor}[thm]{Corollary}
\theoremstyle{definition}
\newtheorem{rem}[thm]{Remark}
\newtheorem{ex}[thm]{Example}
\DeclareMathOperator{\sgn}{sgn}
\DeclareMathOperator{\arctanh}{arctanh}
\newcommand{\RR}{\mathbb{R}}
\newcommand{\ZZ}{\mathbb{Z}}
\newcommand{\EE}{\mathbb{R}}
\newcommand{\HH}{{H}}
\newcommand{\CC}{\mathcal{C}}
\newcommand{\AAA}{\mathcal{A}}
\newcommand{\eps}{\varepsilon}
\newcommand{\la}{\langle}
\newcommand{\ra}{\rangle}
\newcommand{\dd}{\partial}
\newcommand{\Ksing}{\mathcal{K}_{\mathrm{sing}}}
\newcommand{\Ssing}{\Sigma_{\mathrm{sing}}}
\newcommand{\Imm}{\mathrm{Im}}
\newcommand{\spann}{\mathrm{span}}
\newcommand{\eq}[1]{\begin{align} #1 \end{align}}
\newcommand{\eqq}[1]{\begin{align}\begin{split}#1\end{split}\end{align}}
\newcommand{\al}[1]{\begin{align*} #1 \end{align*}}
\DeclareFontFamily{U}{mathx}{\hyphenchar\font45} 
\DeclareFontShape{U}{mathx}{m}{n}{<-> mathx10}{}
\DeclareSymbolFont{mathx}{U}{mathx}{m}{n}
\DeclareMathAccent{\widebar}{0}{mathx}{"73}
\begin{document}


\title{Embeddedness of timelike maximal surfaces in $(1+2)$-Minkowski space}
\author{E.\ Adam Paxton\footnote{Department of Physics, University of Oxford, Parks Road, Oxford, UK}}

\maketitle
\begin{abstract}
We prove that if $\phi \colon \RR^2 \to \EE^{1+2}$ is a smooth proper timelike immersion with vanishing mean curvature, then necessarily $\phi$ is an embedding, and every compact subset of $\phi(\RR^2)$ is a smooth graph. It follows that if one evolves any smooth self-intersecting spacelike curve (or any planar spacelike curve whose unit tangent vector spans a closed semi-circle) so as to trace a timelike surface of vanishing mean curvature in $\EE^{1+2}$, then the evolving surface will either fail to remain timelike, or it will fail to remain smooth. We show that, even allowing for null points, such a Cauchy evolution must undergo a scalar curvature blow-up---where the blow-up is with respect to an $L^1\-L^\infty$ norm---and thus the evolving surface will be $C^2$ inextendible beyond singular time. In addition we study the continuity of the unit tangent for the evolution of a self-intersecting curve in isothermal gauge, which defines a well-known evolution beyond singular time.
\end{abstract}
\tableofcontents
\listoffigures

\section{Introduction \& statement of main results}
The study of minimal surfaces in Euclidean space $\EE^3$ has a long history, and many interesting examples of complete minimal surfaces in $\EE^3$ are known. On the other hand, many beautiful theorems have demonstrated that minimal surfaces in $\RR^3$ exhibit a certain rigidity. For example, Bernstein's theorem states that any complete minimal surface in $\RR^3$ which is a graph, must be a plane. In this article we consider the timelike maximal surfaces in Minkowski space $\EE^{1+2}$, where the picture is quite different. 

If $\phi \colon M^2 \to \RR^{1+2}$ is a smooth proper timelike immersion then $\phi^0$ is a Morse function and it follows that $M^2$ is diffeomorphic to either $\RR^2$ or $S^1 \times \RR$. In appropriate coordinates, the mean curvature of $\phi(M^2)$ is hyperbolic, and by solving a Cauchy problem with sufficiently ``small'' initial data, it is possible to construct smooth proper timelike maximal immersions $\phi\colon \RR^2 \to \RR^{1+2}$ such that $\phi(\RR^2)$ is a smooth graph close to a timelike plane (see Lindblad \cite{lindblad}). This clearly contrasts with Bernstein's theorem in $\EE^3$ (for more stability results in higher dimensions and higher codimensions see Allen, Anderson \& Isenberg \cite{AAI}, Brendle \cite{brendle}, Donninger, Krieger, Szeftel \& Wong \cite{DKSW}, as well as \cite{lindblad}). 

On the other hand, given suitably ``large'' data, the Cauchy evolution for a timelike maximal surface will develop singularity in finite time (see e.g.\ Bellettini, Hoppe, Novaga \& Orlandi \cite{BHNO}, Eggers \& Hoppe \cite{EH}, Kibble \& Turok \cite{KT}, and Nguyen \& Tian \cite{NT}. See also Bahouri, Marachli \& Perelman \cite{BMP}, Eggers, Hoppe, Hynek \& Suramlishvili \cite{EHHS} and Wong \cite{WWong} for results in higher dimensions). Nguyen \& Tian proved: {there exists no smooth proper timelike immersion $\phi\colon S^1\times \RR \to \EE^{1+2}$ with vanishing mean curvature} \cite{NT}. Thus the Cauchy evolution of any closed curve will form singularity in finite time, and every smooth proper timelike maximal immersion in $\RR^{1+2}$ is of the form $\phi \colon \RR^2 \to \RR^{1+2}$. 

In this article we will be concerned with the geometry of timelike maximal immersions $\phi \colon \RR^2 \to \RR^{1+2}$ and the corresponding Cauchy evolution for open curves. Our first result is:
\begin{thm}\label{maintheorem}
Let $\phi \colon \RR^2 \to \EE^{1+2}$ be a smooth, proper, timelike immersion with vanishing mean curvature. Then $\phi$ is an embedding. Moreover, for each compact subset $K\subseteq \phi(\RR^2)$, there is a timelike plane $P\subseteq \EE^{1+2}$ such that $K$ is a smooth graph over $P$.
\end{thm}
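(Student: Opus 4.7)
The plan is to use the characteristic (Kibble--Turok) representation of timelike maximal surfaces. On the simply connected domain $\RR^2$, one introduces global null coordinates in which $\phi(u, v) = a(u) + b(v)$ for smooth future-null curves $a, b \colon \RR \to \EE^{1+2}$; properness of $\phi$ forces each of $a^0$, $b^0$ to be a bijection $\RR \to \RR$, so after rescaling I may take $a^0(u) = u$ and $b^0(v) = v$, giving
\[
a'(u) = (1, \cos\alpha(u), \sin\alpha(u)), \qquad b'(v) = (1, \cos\beta(v), \sin\beta(v)),
\]
for smooth $\alpha, \beta \colon \RR \to S^1$. The immersion condition is precisely $\alpha(u) \ne \beta(v)$ for all $(u,v)$; lifting to $\RR$ and relabeling, I arrange $\beta(v) < \alpha(u) < \beta(v) + 2\pi$ throughout, so the images of $\alpha, \beta$ lie in disjoint intervals $I_\alpha, I_\beta \subset \RR$ with $|I_\alpha| + |I_\beta| \le 2\pi$.

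For the embedding, suppose $\phi(u_1, v_1) = \phi(u_2, v_2)$ with $(u_1, v_1) \ne (u_2, v_2)$. The time component $\phi^0 = u + v$ forces $u_1 + v_1 = u_2 + v_2$; WLOG $\tau := u_1 - u_2 = v_2 - v_1 > 0$, and dividing the spatial equation by $\tau$ gives
\[
\frac{1}{\tau}\int_{u_2}^{u_1} e(\alpha(u))\, du = \frac{1}{\tau}\int_{v_1}^{v_2} e(\beta(v))\, dv, \qquad e(\theta) := (\cos\theta, \sin\theta).
\]
Both sides are vector averages lying in the convex hulls of $e(\alpha([u_2, u_1]))$ and $e(\beta([v_1, v_2]))$, which are compact subarcs of strictly disjoint arcs of $S^1$. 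Picking one point from each component of $S^1 \setminus (\alpha([u_2, u_1]) \cup \beta([v_1, v_2]))$ yields a chord of the unit disk that strictly separates the two convex hulls into disjoint open half-planes in $\RR^2$, contradicting the equality of the averages. Hence $\phi$ is injective.

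For the graph property, given compact $K \subset \phi(\RR^2)$, properness makes $\phi^{-1}(K)$ compact, so the function $\gamma := (\alpha + \beta)/2$ has compact image $\Gamma_K \subset S^1$ of length strictly less than $\pi$. Choose $\psi \in S^1$ so that $\Gamma_K \subset (\psi - \pi/2, \psi + \pi/2)$, set $\hat\nu := (0, \cos\psi, \sin\psi)$ (a unit spacelike vector), and let $P := \hat\nu^\perp$ (a timelike plane through the time axis). The unit normal to $\phi$ is proportional to $(\cos\tfrac{\alpha - \beta}{2}, \cos\gamma, \sin\gamma)$, so $\hat\nu \cdot \hat N \propto \cos(\gamma - \psi) > 0$ on $\phi^{-1}(K)$; hence $\hat\nu$ is transverse to the tangent plane at every point of $K$. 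Since $\hat\nu^0 = 0$, the projection $\pi_P$ preserves the time coordinate, and on each slice $\{u + v = t\}$ the spatial curve $\psi_t(u) := (\phi^1, \phi^2)(u, t-u)$ has derivative proportional to $(-\sin\gamma, \cos\gamma)$, whose inner product with $e(\psi + \pi/2)$ equals $\cos(\gamma - \psi) > 0$; hence $\psi_t$ is monotonic along the spatial direction of $P$, making $\pi_P$ injective on each slice and thus on $K$. Therefore $K$ is a smooth graph over $P$.

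The central technical step is establishing the global null parametrization with $a^0(u) = u$, $b^0(v) = v$ on all of $\RR^2$: one must show that the two characteristic null foliations on the intrinsic Lorentzian manifold extend to globally parametrized leaves whose time coordinates sweep out all of $\RR$, which combines simple connectivity of the domain with properness of $\phi$. Given that, the embedding follows from the separating-chord convexity argument, and the graph property from the monotonicity of the time-slice curves along the chosen direction.
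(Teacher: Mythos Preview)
Your approach is essentially the same as the paper's. Your null coordinates $(u,v)$ are the characteristic variables $s\pm t$ of the paper's isothermal parametrization $\phi(s,t)=(t,\gamma(s,t))$, and your angle functions $\alpha,\beta$ are precisely the lifts of the paper's $a_+,a_-$ (up to a shift by $\pi$). Your separating-chord/convex-hull argument is a reformulation of the paper's Lemma~3.2: the direction $\omega$ there is the normal to your chord, and the conclusion $\langle a_+(\xi)-a_-(\eta),\omega\rangle>0$ is exactly what makes the two hulls lie in opposite half-planes. The monotonicity of the time-slice curves along the chosen spatial direction of $P$ is likewise identical to the paper's computation $\langle\gamma_s,\omega_M\rangle>0$.

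The one substantive point is the one you flag yourself: the existence of the global null (equivalently, isothermal) parametrization with $a^0(u)=u$, $b^0(v)=v$ on all of $\RR^2$. The paper devotes Lemmas~2.1 and~2.2 to this, first using properness and the timelike condition to straighten $\phi^0$ to $t$ via the flow of its gradient, and then solving a transport equation along characteristics (with an a~priori bound $|\langle\gamma_s,\gamma_t\rangle|<1$ ensuring global solvability) to achieve the isothermal gauge. Your sketch (``simple connectivity plus properness'') is correct in spirit but does not by itself supply this argument; in particular, global conformal coordinates on a simply connected Lorentzian surface are not automatic, and the paper's explicit use of properness to show the characteristics reach $\{t=0\}$ and that the reparametrization is a diffeomorphism onto $\RR^2$ is where the work lies. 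Once that is in hand, your deduction of embeddedness and the local-graph property is complete and matches the paper's.
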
 
\begin{rem}
The restriction to compact subsets in Theorem \ref{maintheorem} cannot be relaxed, and there exist examples (see Subsection \ref{graphsection}) of smooth proper timelike maximal embeddings $\phi \colon \RR^2 \to \EE^{1+2}$ such that $\phi(\RR^2)$ is not a graph. 
\end{rem}
\begin{rem}
\sloppy If $\phi \colon \RR^2 \to \RR^{1+2}$ is a smooth proper timelike maximal immersion, then in terms of a spacelike unit normal 
$N \colon \phi(\RR^2) \to S^{1+1} = \big\{ (\sinh 
\varphi, \cos \vartheta \cosh \varphi , \sin \vartheta \cosh \varphi ) \colon (\vartheta,\varphi)\in \RR^2 \big\}\subseteq\RR^{1+2}$ 
Theorem \ref{maintheorem} states that for every compact subset $K \subseteq \phi(\RR^2)$, $N(K)$ is contained in an open hemi-hyperboloid $S^{1+1}_+ = \big\{ (\sinh 
\varphi, \cos \vartheta \cosh \varphi , \sin \vartheta \cosh \varphi ) \colon  (\vartheta,\varphi)\in (\vartheta_0-\frac{\pi}{2}, \vartheta_0 + \frac{\pi}{2}) \times \RR \big\}  \subseteq \RR^{1+2}$ for some $\vartheta_0 \in \RR$ (which is a hemi-sphere with respect to the Minkowski metric). This may be compared with the counterpart in the Riemannian setting. For example, it is well-known that for a complete minimal surface in $\EE^3$ the image of the unit normal is either a single point, or it omits at most 4 points in the sphere $S^2$. 
\end{rem}
\begin{rem}
As a crucial step in the proof of Theorem \ref{maintheorem}, we adapt an argument of Belletini, Hoppe, Novaga \& Orlandi \cite{BHNO} to construct a global system of isothermal coordinates on an immersed timelike maximal surface (another such construction of global isothermal coordinates may be found in \cite[Chap. 7]{weinsteinbook}). This is a non-trivial result as, in stark contrast with the Riemannian setting, there exist infinitely many possible conformal structures of simply connected Lorentzian surfaces (Kulkarni \cite{kulkarni}).
\end{rem}
The coordinate $x^0$ on $\RR^{1+2}$ is a time-function, and we now turn to a Cauchy problem for timelike maximal surfaces in $\EE^{1+2}$. Let $\CC\colon \RR \to \{x^0=0\}\subseteq \RR^{1+2}$ be a smooth proper immersion and let $V$ be a smooth future-directed timelike vector field along $\CC$. We say $\phi\colon \RR \times [-T,T] \to \EE^{1+2}$, $\phi(s,t)=(t,\gamma^1(s,t),\gamma^2(s,t))$, is a smooth timelike Cauchy evolution for $(\CC,V)$ if $\phi$ is a smooth proper timelike immersion with vanishing mean curvature such that $\phi(\cdot, 0)=\CC$ and $V$ is tangent to $\mathrm{Im}(\phi)$ along $\CC$. For a given smooth initial data $(\CC,V)$ let 
\al{T_*=\sup\{T\geq0\colon &\text{there exists a smooth timelike Cauchy evolution} \\ 
&\phi \colon  \RR \times [-T,T] \to \EE^{1+2} \hskip3pt \text{for} \hskip2pt (\CC,V)  \}.}
It may be shown that $T_*>0$ under mild assumptions on the initial data $(\CC,V)$ (see e.g.\ Corollary \ref{ste}) and from Theorem \ref{maintheorem} it may be seen to follow that if the image $U_0(\CC)$ of the unit tangent vector $U_0$ along $\CC$ contains a closed semi-circle (for example, if $\CC$ is a self-intersecting curve) then $T_*<\infty$. However, our proof of Theorem \ref{maintheorem} is by contradiction, and thus does not shed any light upon the nature of singularity at time $T_*$. It is natural to ask whether one can define a smooth, or $C^k$ for some $k$, extension of the surface beyond singular time, possibly by allowing for null points.

It is well-known that singular behaviour necessarily involves the maximal surface failing to remain timelike at the time $T_*$ (i.e.\ the hyperbolicity degenerates), see Jerrard, Novaga \& Orlandi \cite[Theorem 3.1]{jerrardetal}. Eggers \& Hoppe \cite{EH} studied singularity formation in a self-similar regime, and observed a blow up of curvature of the spatial cross-sections at the singular time $T_*$. Nguyen \& Tian observed that, provided the 2\textsuperscript{nd} order term in a certain Taylor expansion is non-vanishing, then the limit curve at singular time $T_*$ will look locally like a $C^{1,\nicefrac{1}{3}}$ graph \cite[Remark 2.6]{NT}. Since the 2\textsuperscript{nd} order term is expected to be generically non-vanishing, one thus expects a blow up of curvature at the singular time generically. We prove:
\begin{thm}\label{noextensionlemma}
Let $\eps>0$, $(s_0,t_0)\in \RR^2$, and $\phi \colon (s_0-\eps,s_0+\eps) \times (t_0-\eps, t_0] \to \EE^{1+2}$ be a $C^1$ immersion of the form $\phi(s,t)=(t,\gamma^1(s,t),\gamma^2(s,t))$, such that $\phi|_{(s_0-\eps,s_0+\eps)\times(t_0-\eps, t_0)}$ is $C^2$ and timelike with bounded mean curvature. Suppose that $\phi$ is null at the point $(s_0,t_0)$, i.e.\ $\mathrm{Im}(d\phi_{(s_0,t_0)})$ is a null plane in $\RR^{1+2}$. Then the curvature of the (planar) curves $\gamma(\cdot, t)$ blows up as $t\nearrow t_0$, and $\phi$ is not $C^2$. 
\end{thm}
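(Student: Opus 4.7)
The plan is to exploit bounded mean curvature to derive a transport equation for the geometric scalar $\beta=\gamma_t\cdot n$ (the transverse ``velocity'' of the spatial curve) along the in-surface characteristic direction, and to observe that forcing $\beta$ to its critical value $\pm 1$ at the null point $(s_0,t_0)$ compels the spatial curvature to blow up.

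Introduce the adapted frame: set $\rho=|\gamma_s|>0$, $\tau=\gamma_s/\rho$, $n=\tau^{\perp}$, decompose $\gamma_t=\alpha\tau+\beta n$, and let $\theta$ denote the angle of $\tau$, so $\tau_s=\theta_s n$ and the signed curvature of $\gamma(\cdot,t)$ is $\kappa=\theta_s/\rho$. A direct computation gives $\det g=\rho^2(\beta^2-1)$, so the timelike condition is $|\beta|<1$ and the null condition at $(s_0,t_0)$ is exactly $\beta(s_0,t_0)=\pm1$; WLOG $\beta(s_0,t_0)=1$. With $N=(\beta,n)/\sqrt{1-\beta^2}$ the unit spacelike normal, evaluating $h=g^{ij}\phi_{ij}\cdot N$ and substituting the compatibility identity $\rho\theta_t=\alpha\theta_s+\beta_s$ (forced by $\gamma_{st}=\gamma_{ts}$) produces, after some algebra, the transport equation
\begin{equation*}
\beta_t-\frac{\alpha}{\rho}\beta_s=\frac{(1-\beta^2)\theta_s}{\rho}-(1-\beta^2)^{3/2}h.
\end{equation*}
The key observation is that the vector field $X:=\partial_t-(\alpha/\rho)\partial_s$ push-forwards under $\phi$ to the ambient vector $(1,\beta n)$, which becomes null precisely when $|\beta|=1$; so $X$ is exactly the characteristic direction that degenerates at $(s_0,t_0)$.

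Set $w:=1-\beta>0$ and divide the transport equation by $w$ to obtain
\begin{equation*}
X\log w=-(1+\beta)\frac{\theta_s}{\rho}+O\!\left(w^{1/2}|h|\right).
\end{equation*}
Flow backwards along $X$ from the singular point: the ODE $\dot\sigma(t)=-\alpha(\sigma(t),t)/\rho(\sigma(t),t)$ with $\sigma(t_0)=s_0$ has a solution on some interval $(t_0-\delta,t_0]$ staying in the domain, since $\alpha/\rho$ extends continuously to $(s_0,t_0)$ by the $C^1$-hypothesis. By continuity $\beta(\sigma(t),t)\to 1$, so $\log w(\sigma(t),t)\to-\infty$ as $t\nearrow t_0$. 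Integrating the displayed identity from $t$ to $t_0$, using $|h|\leq C$ and $w^{1/2}\to 0$ to make the error term integrable, then forces
\begin{equation*}
\int_t^{t_0}(1+\beta)\frac{\theta_s}{\rho}\Big|_{(\sigma(t'),t')}\,dt'\longrightarrow+\infty\quad\text{as }t\nearrow t_0.
\end{equation*}
Since $1+\beta\to 2$ is bounded, $\kappa=\theta_s/\rho$ cannot be bounded on any $[t,t_0)$, so there exists a sequence $t_n\nearrow t_0$ with $\sup_s|\kappa(s,t_n)|\to\infty$: this is the asserted curvature blow-up.

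If $\phi$ were $C^2$ up to $(s_0,t_0)$, then $\gamma_s,\gamma_{ss}$ would be continuous on a closed neighborhood of $(s_0,t_0)$ with $|\gamma_s|$ bounded below, hence $\kappa=\det(\gamma_s,\gamma_{ss})/|\gamma_s|^3$ would be continuous and bounded there, contradicting the blow-up. The main technical obstacle is distilling the direct mean curvature computation into the clean transport form above and verifying that the backward characteristic reaches $(s_0,t_0)$ while staying in the domain; once those are in place, the integration-along-characteristic yields the conclusion automatically.
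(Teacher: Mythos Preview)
Your proof is correct and is essentially the paper's argument in a different packaging. The backward characteristic you integrate along, $\dot\sigma(t)=-\alpha/\rho=-\la\gamma_s,\gamma_t\ra/|\gamma_s|^2$, is precisely the curve $r(t)$ the paper constructs in its proof of Theorem~\ref{noextensionlemma}, and your transport identity $X\beta=(1-\beta^2)\kappa-(1-\beta^2)^{3/2}h$ is the general-coordinate version of the paper's identity~\eqref{rearrangedcurvature} from Proposition~\ref{regularityloss} (which is stated under the orthogonality hypothesis $\alpha=0$). The only difference is organizational: the paper first changes variables along this curve to force $\la\gamma_s,\gamma_t\ra=0$ and then computes the mean curvature in that gauge, whereas you keep the original coordinates, derive the transport equation directly, and integrate $X\log(1-\beta)$ along the same curve---a slightly more streamlined route to the same divergence $\int\kappa\,dt'=\infty$.
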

\begin{rem}
In fact, we deduce Theorem \ref{noextensionlemma} from a stronger result which gives the precise rate of curvature blow-up in an $L^1L^\infty$ norm. Moreover, whilst Theorem \ref{noextensionlemma} assumes the case that the limit curve at singularity formation is $C^1$, this blow-up rate holds without assuming any structure of the singularity. See Proposition \ref{regularityloss} for details.
\end{rem}
Theorem \ref{noextensionlemma} rules out (in all cases) the possibility of a $C^2$ causal extension of the Cauchy evolution beyond singular time. However, one may still ask whether there exists a $C^1$ causal extension. A complete answer to this question, independent of gauge, is currently out of reach. Nonetheless, we will proceed to consider one well-known extension beyond singular time: by solving the maximal surface equations globally in isothermal gauge (a construction somewhat analogous to the Weierstrass representation for minimal surfaces in $\EE^3$) \cite[Chap. 8]{weinsteinbook}, \cite[Chap. 7]{string}. 

Let us briefly recall the method of isothermal gauge. Since we are now concerned with the prospect of less regular maximal surfaces, it is natural to consider less regular initial data $(\CC,V)$ (other weak notions of solution have been considered by Belletini, Novaga \& Orlandi \cite{lorentzian_varifolds} and Brenier \cite{brenier}). Let $\CC\colon \RR \to \{x^0=0\}\subseteq \EE^{1+2}$ be a $C^k$ proper immersion, $k\geq1$, and let $V$ be a $C^{k-1}$ future-directed timelike vector field along $\CC$. One may construct a proper $C^k$ map $\phi\colon \RR^2 \to \EE^{1+2}$ of the form $\phi(s,t)=(t,\gamma(s,t))$, where $\gamma$ satisfies (in the weak sense if $k=1$) the system of equations $\la \gamma_s,\gamma_t\ra=0$, $|\gamma_s|^2+|\gamma_t|^2=1$, $\gamma_{tt}-\gamma_{ss}=0$, such that $\mathrm{Im}(\phi(\cdot,0))=\mathrm{Im}(\CC)$ and $\Sigma=\phi(\RR^2)$ is tangent to $V$ along $\CC$. $\phi$ defines a $C^k$ timelike maximal immersion on $\RR^2\setminus\Ksing$ where $\Ksing = \{ (s,t) \colon \gamma_s(s,t)=0\}$ and $\Sigma$ gives a $C^k$ timelike maximal surface away from $\Ssing=\phi(\Ksing)$. For every $p\in \Ssing$ either $\Sigma$ fails to be a $C^1$ surface in a neighbourhood of $p$ or $\Sigma$ is a $C^1$ surface in a neighbourhood of $p$ but is null at $p$. See Section \ref{isothermalgaugesection} for more details. 

From Theorems \ref{maintheorem} and \ref{noextensionlemma} it follows that if $U_0(\CC)$ contains a closed semi-circle, then $\Sigma$ cannot be a $C^2$ immersed surface (see Corollary \ref{noC2cor}). There are (non-generic) cases however where $\Sigma$ is $C^1$ immersed. Indeed, in Example \ref{regularexample1} we present a curve $\CC$ for which $U_0(\CC)$ is exactly a closed semi-circle and show that an evolution by isothermal gauge of $\CC$ yields a $C^1$ embedded surface which is a smooth timelike maximal surface away from a pair of null half-lines. This surface contains non-graphical compact sets (compare Theorem \ref{maintheorem}). It turns out however that the situation of Example \ref{regularexample1} is borderline. We prove:
\begin{thm}\label{gaugetheorem}
Let $\phi\colon\RR^2\to\EE^{1+2}$, $\phi(s,t)=(t,\gamma(s,t))$, be a $C^1$ evolution for a maximal surface by isothermal gauge, as described above, and write $U_0\colon \RR \to S^1$ for the unit tangent vector along the initial curve $\gamma(\cdot,0)$. Suppose that $\mathrm{Im}(U_0)$ contains an arc of length $>\pi$ (for example if $\gamma(\cdot,0)$ is self-intersecting). Then there exists a time $t_*\in\RR$ such that: either $\mathrm{Im}(\gamma(\cdot,t_*))$ is not a $C^1$ immersed curve; or $\mathrm{Im}(\gamma(\cdot,t_*))$ is a $C^1$ immersed curve, but the spatial unit tangent $U(\cdot,t_*) = \nicefrac{\gamma_s(\cdot,t_*)}{|\gamma_s(\cdot,t_*)|}$ (defined only on the set $\{s \colon \gamma_s(s,t_*)\neq0\}$) admits no extension to a continuous unit tangent vector field along $\gamma(\cdot,t_*)$.
\end{thm}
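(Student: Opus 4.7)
The plan is to use the d'Alembert representation $\gamma(s,t) = \tfrac{1}{2}(f(s+t) + g(s-t))$ built into the isothermal gauge, with $|f'|\equiv|g'|\equiv 1$. Setting $a = f'$ and $b = g' \colon \RR \to S^1$, and choosing continuous lifts $\alpha,\beta \colon \RR \to \RR$ with $a = e^{i\alpha}$, $b = e^{i\beta}$, the hypothesis that $U_0 = (a+b)/|a+b|$ is defined on all of $\RR$ allows the normalisation $\alpha(s) - \beta(s) \in (-\pi, \pi)$ throughout, so that $U_0(s) = e^{i(\alpha(s) + \beta(s))/2}$.

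\textbf{Step 1: produce a null singular point.} The assumption that $\Imm(U_0)$ contains an arc of length $>\pi$ is equivalent to the continuous real function $(\alpha+\beta)/2$ on $\RR$ having range of length $>\pi$, which forces $(\sup\alpha - \inf\alpha) + (\sup\beta - \inf\beta) > 2\pi$. But this quantity is precisely the length of the range of the continuous function $F(\xi,\eta) := \alpha(\xi) - \beta(\eta)$ on $\RR^2$ (whose range is the interval $[\inf\alpha - \sup\beta,\, \sup\alpha - \inf\beta]$). By the intermediate value theorem, $F$ attains every value in this interval of length $>2\pi$; such an interval must contain a value of the form $\pi + 2k\pi$ for some $k \in \ZZ$. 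So there exist $\xi_0, \eta_0$ with $a(\xi_0) = -b(\eta_0)$, and the point $(s_*, t_*) := ((\xi_0 + \eta_0)/2,\, (\xi_0 - \eta_0)/2)$ satisfies $\gamma_s(s_*, t_*) = 0$.

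\textbf{Step 2: local analysis forces the dichotomy.} With $v := e^{i\alpha(\xi_0)}$, $\tilde\alpha(h) := \alpha(\xi_0 + h) - \alpha(\xi_0)$ and $\tilde\beta(h) := \beta(\eta_0 + h) - \beta(\eta_0)$, the sum-to-product identity combined with $e^{i\beta(\eta_0)} = -v$ gives
\[
\gamma_s(s_*+h,\,t_*) \;=\; iv\, \sin\!\tfrac{\tilde\alpha(h) - \tilde\beta(h)}{2}\, \exp\!\bigl(i(\tilde\alpha(h) + \tilde\beta(h))/2\bigr).
\]
Wherever this is nonzero, $U(s_*+h, t_*) = \sgn\bigl(\sin\tfrac{\tilde\alpha - \tilde\beta}{2}\bigr) \cdot iv \cdot e^{i(\tilde\alpha + \tilde\beta)/2}$; as $h \to 0^{\pm}$, the angular factor tends to $iv$ and the sign factor tends to $\lim_{h \to 0^\pm} \sgn(\tilde\alpha(h) - \tilde\beta(h))$. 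Hence $U(\cdot, t_*)$ cannot be continuously extended at $s_*$ when $\tilde\alpha - \tilde\beta$ changes sign at $h = 0$: in that case, if $\Imm(\gamma(\cdot, t_*))$ is a $C^1$ immersed curve, the jump of $\pi$ in $U$ yields the second alternative of the theorem, and otherwise the first alternative holds directly.

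\textbf{The main obstacle} is the degenerate case in which $\tilde\alpha - \tilde\beta$ keeps constant sign at $h = 0$ at the null singular point found in Step~1; this is equivalent to the level set $\{F \in \pi + 2\pi\ZZ\}$ in $\RR^2$ being tangent at $(\xi_0, \eta_0)$ to the null line $\xi - \eta = 2t_*$. My plan to handle it is the following: if \emph{every} null singular point in $\RR^2$ is degenerate in this sense, then a continuous adaptation of the uniqueness-of-integral-curves argument forces the corresponding level set of $F$ to be an entire null line $\{\xi - \eta = 2t_0\}$, meaning the identification $\alpha(\xi) = \beta(\xi - 2t_0) + \pi \pmod{2\pi}$ holds identically in $\xi$. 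This causes $\gamma_s(\cdot, t_0) \equiv 0$, so $\gamma(\cdot, t_0)$ is constant and $\Imm(\gamma(\cdot, t_0))$ is a single point --- certainly not a $C^1$ immersed curve --- and the first alternative holds at $t_* = t_0$. Otherwise there exists a transversal null singular point, and the analysis of Step~2 applies there.
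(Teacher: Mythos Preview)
Your Step~1 and the non-degenerate branch of Step~2 are sound and match the paper's setup. The genuine gap is in your handling of the degenerate case, and it is not a matter of missing details: the plan as stated does not work.

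First, Step~2 implicitly assumes that the singular set $\{s:\gamma_s(s,t_*)=0\}$ meets a neighbourhood of $s_*$ in the single point $\{s_*\}$. If instead it contains an interval $[r_1,r_2]\ni s_*$, then the relevant one-sided limits of $U(\cdot,t_*)$ are taken at $r_1$ and $r_2$, not at $s_*$, and there the ``angular factor'' $e^{i(\tilde\alpha+\tilde\beta)/2}$ need not equal $iv$ on both sides. In fact it can differ by exactly $\pi$ between the two endpoints, cancelling the sign flip in $\sgn\bigl(\sin\tfrac{\tilde\alpha-\tilde\beta}{2}\bigr)$ and giving a \emph{continuous} extension of $U$ even though the sign changes across $[r_1,r_2]$. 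This is precisely the cusp-reversal phenomenon of Example~\ref{cuspswitcherexample}, and your dichotomy misses it: at both $r_1$ and $r_2$ the local quantity $\tilde\alpha-\tilde\beta$ keeps a constant sign (non-positive on one side, identically zero on the other), so every singular point at that time is ``degenerate'' in your sense, yet globally $\sin\tfrac{\beta(\cdot,t_*)}{2}$ takes both signs and the level set of $F$ is certainly not a null line.

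Second, and relatedly, the proposed ``continuous adaptation of uniqueness of integral curves'' cannot close the argument. For merely continuous $\alpha,\beta$ there is no vector field whose integral curves are the level sets of $F$, and the local condition ``$F$ restricted to the diagonal through $(\xi_0,\eta_0)$ has a one-sided extremum'' simply does not force the level set to \emph{be} a diagonal. The paper avoids this trap entirely: rather than analysing a single null point, it first proves (Lemma~\ref{betaswitcherlemma}) by a clean connectedness argument on $t\in\RR$, using properness of $\CC$, that some time $t_*$ exists at which $\sin\tfrac{\beta(\cdot,t_*)}{2}$ takes both signs \emph{globally}. It then shows (Lemma~\ref{jerrardslemma}) that even if $U(\cdot,t_*)$ happens to extend continuously at that $t_*$ --- exactly the cusp-reversal scenario --- a small perturbation of $t_*$ destroys the cancellation, because the condition that $\alpha_+$ rotate by an odd multiple of $\pi$ over the collapsed interval is unstable. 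Your argument needs an analogue of this perturbation step; without it, the degenerate case remains open.
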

In most cases, the discontinuity of the spatial unit tangent corresponds to the curve $\gamma(\cdot,t_*)$ failing to be $C^1$. Eggers \& Hoppe \cite{EH} introduced the swallowtail singularity, whereby the first singularity is a $C^{1,\nicefrac{1}{3}}$ curve which immediately splits off into a twin pair of travelling cusps. This picture was shown to be (in some sense) generic, for sufficiently regular initial data, by Nguyen \& Tian \cite[Section 3]{NT}. There exist, however, non-generic cases whereby the discontinuity of the unit tangent does not imply a regular cusp, and it is possible that the unit tangent admits no continuous extension along $\gamma(\cdot,t_*)$, whilst $\mathrm{Im}(\gamma(\cdot,t_*))$ is a $C^1$ immersed curve, see Example \ref{sheetingexample}. Although we have no example where such a degenerate situation occurs whilst the surface $\phi(\RR^2)$ remains $C^1$, we don't rule this out.

Finally, we note that Theorem \ref{maintheorem} fails for timelike maximal surfaces in $\EE^{1+n}$ for $n\geq3$. Nguyen \& Tian gave an example of a smooth, proper, timelike maximal immersion $\phi \colon S^1\times \RR \to \EE^{1+3}$ \cite[Appendix]{NT}, and it was conjectured that generic closed curves do not evolve to singularities in higher codimension. This conjecture was confirmed by Jerrard, Novaga \& Orlandi \cite{jerrardetal} who showed that when $n\geq4$, generic closed curves with generic initial velocity will evolve to a globally regular surface, whilst in the borderline case $n=3$ there are distinct, non-empty open sets of initial data leading to both regular surfaces and singular surfaces respectively. It is simple to see how the example of \cite[Appendix]{NT} may be adapted to give a smooth proper self-intersecting timelike maximal immersion $\phi \colon \RR^2 \to \EE^{1+3}$ and it would be interesting to obtain similar results to \cite{jerrardetal} for open curves.

{\bf Structure of the paper.} In Section \ref{preliminaries} we introduce the timelike maximal surface equations, and give a construction of global isothermal coordinates on any properly immersed timelike maximal surface (Lemma \ref{lemma2}). In Section \ref{embeddednesssection} we prove Theorem \ref{maintheorem} and give examples of both graphical and non-graphical timelike maximal surfaces. In Section \ref{c2inextendibilitysection} we prove Theorem \ref{noextensionlemma}, and we discuss in a bit more detail the rate of curvature blow-up (see Proposition \ref{regularityloss} and Example \ref{shrinkingcircle}). Section \ref{singularevolution} is then devoted to analysis in isothermal gauge. In Subsection \ref{isothermalgaugesection} we recall the isothermal gauge construction and gather some known results. In Subsection \ref{singularsetanalysis} we give further analysis of the solution by isothermal gauge. In particular we present local and global existence results which are notable in that they require no decay on the initial data at infinity (Corollary \ref{ste} and Remark \ref{globalexistenceremark}) and we give localized singularity statements to complement Theorem \ref{maintheorem} (Proposition \ref{keyprop} and Corollary \ref{noC2cor}). In Subsection \ref{c1extendibleexamples} we give examples illustrating some possible (non-generic) singular behaviours, including $C^1$ properly embedded surfaces containing non-graphical compact sets which are smooth timelike maximal surfaces away from a pair of null half-lines (Example \ref{regularexample1}), and $C^1$ properly embedded graphical (but not $C^1$ graphical) periodic surfaces which are smooth timelike maximal surfaces away from a discrete lattice of null points (Example \ref{regularexample2}). In Subsection \ref{gaugesection} we give the proof of Theorem \ref{gaugetheorem}, and we also present some more examples of possible non-generic singular behaviours (Examples \ref{cuspswitcherexample} and \ref{sheetingexample}).

{\bf Acknowledgement.} I would like to thank my supervisor, Luc Nguyen, for being so generous with his time, and for many insightful comments. This work was completed with the support of the
Engineering and Physical Sciences Research
Council [EP/L015811/1].

\section{Preliminaries}\label{preliminaries}
In this section we will first give a brief recap of the maximal surface equations. We will then present an adaptation of the construction of global isothermal coordinates which was given by Belletini, Hoppe, Novaga \& Orlandi in \cite{BHNO}, for a spatially compact timelike maximal surface, to the spatially non-compact case. We note that another construction of global isothermal coordinates is given in \cite[Chapter 7]{weinsteinbook}.
\subsection{Maximal surface equations}\label{maxsurfaceequations}
Let $x=(x^0,x^1,x^2)$ denote standard (i.e.\ {\it inertial}) coordinates on $\EE^{1+2}$, so that the Minkowski metric is $\eta = -(dx^0)^2 + (dx^1)^2 + (dx^2)^2$. Let $\Omega\subseteq \RR^2$ be an open subset and $\phi \colon \Omega \to \EE^{1+2}$ be a $C^1$ immersion. We write $\phi^\alpha = x^\alpha \circ \phi$ for the expression of $\phi$ in coordinates, $\alpha=0,1,2$, and denote the image of $\phi$ by $\Sigma=\mathrm{Im}(\phi)$. The metric $g$ induced by $\phi$ is the bilinear form $g_p \colon T_p\RR^2 \times T_p\RR^2 \to \RR$ given by $g_p(X,Y)=\eta(d\phi_p(X), d\phi_p(Y))$. 

For each $p\in \RR^2$, recall that $\phi$ is {{timelike}} at $p$ if $\det(g_p)<0$, $\phi$ is {{null}} at $p$ if $\det(g_p)=0$, $\phi$ is {spacelike} if $\det(g_p)>0$, and $\phi$ is {{causal}} at $p$ if $\phi$ is either timelike or null at $p$. We say that $\phi$ is timelike (resp.\ causal) if it is timelike (resp.\ causal) at every point $p$. In the case that $\phi$ is timelike at $p$, there exists a choice of unit spacelike normal vector $N(p)$, and we have a direct sum decomposition of the tangent space which is orthogonal with respect to $\eta$,
\al{ T_{\phi(p)}\EE^{1+2} = \mathrm{span}\{N(p)\} \oplus T_{\phi(p)} \Sigma.}

Let $(s,t)$ denote coordinates on $\Omega\subseteq \RR^2$. For every compact subset $V\subseteq \Omega$, define the {area} of $\phi(V)$ as
\al{ \AAA\left[ \phi ; V\right] = \int_V \sqrt{|\det(g(s,t))|} ds dt.}
The area of $\phi(V)$ is independent of the choice of coordinates $(s,t)$ on $V$. The Euler-Lagrange equations associated to the area functional $\AAA$ are
\eq{\label{gaussweingarten} \frac{1}{\sqrt{|\det g|}} \dd_i \left( \sqrt{|\det g|} g^{ij} \partial_j \phi^\alpha \right)=0,}
having adopted the summation convention. We say that a $C^1$ immersion $\phi$ is {\it maximal} if it satisfies \eqref{gaussweingarten} in the weak sense. When $\phi$ is a $C^2$ timelike immersion, \eqref{gaussweingarten} is equivalent to $\HH(\phi)=0$, where $\HH$ is the mean-curvature vector of $\phi(\Omega)$.

\eqref{gaussweingarten} is independent of the choice of coordinates, so if $\phi\colon \RR^2 \to \RR^{1+2}$ is a smooth solution to \eqref{gaussweingarten} and $\psi \colon \RR^2 \to \RR^2$ is a smooth diffeomorphism, then $\phi'=\phi \circ \psi$ also solves \eqref{gaussweingarten}. \eqref{gaussweingarten} is also invariant under rescaling of $\EE^{1+2}$, as well as the isometries of $\EE^{1+2}$. For a timelike immersion, with respect to a system of isothermal coordinates, \eqref{gaussweingarten} reduces to the wave equation 
\al{\phi_{tt}-\phi_{ss}=0.} 

\subsection{Construction of isothermal coordinates}

\begin{lem}\label{lemma1}
Let $\phi \colon \RR^2 \to \EE^{1+2}$ be a smooth, proper, timelike immersion. Then there exists a smooth diffeomorphism $\psi \colon \RR^2 \to \RR^2$ such that $\phi'=\phi \circ \psi$ is of the form $\phi'(s,t) = (t, \gamma^1(s,t), \gamma^2(s,t))$ where $\gamma=(\gamma^1,\gamma^2)$ satisfies $|\gamma_s|^2=1$.
\end{lem}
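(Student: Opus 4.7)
The plan is to realize $\psi$ as the joint flow of two carefully chosen smooth, complete vector fields on $\RR^2$, one transverse and one tangent to the level sets of $u := \phi^0$. Since $\phi$ is timelike, $du$ vanishes nowhere (otherwise $d\phi_p(T_p\RR^2)$ would lie in $\ker dx^0$ and be spacelike). At each $p$, the timelike tangent plane $P_p := d\phi_p(T_p\RR^2)$ meets the spatial subspace $\spann\{e_1,e_2\}$ in a spacelike line, and using simple connectedness of $\RR^2$ to trivialize this line bundle I would choose a smooth unit section $W(p) = (0, \vec W(p))$. Let $V(p) \in P_p$ be the unit future-timelike vector $\eta$-orthogonal to $W(p)$, and define vector fields on $\RR^2$ by $\tilde W := (d\phi)^{-1}(W)$ and $\tilde V := (d\phi)^{-1}(V/V^0)$, so that $du(\tilde W) = 0$, $du(\tilde V) = 1$, and $\{\tilde V, \tilde W\}$ spans $T\RR^2$ pointwise.

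Next I would establish completeness of both $\tilde V$ and $\tilde W$ using properness of $\phi$. The Euclidean norms in $\RR^{1+2}$ of $W$ and $V/V^0$ are $1$ and $\sqrt{1 + |\vec V|^2/(V^0)^2} < \sqrt{2}$ respectively, the bound $|\vec V|^2 < (V^0)^2$ being exactly the unit-timelike condition on $V$. Thus along any integral curve of either vector field, $\phi$ travels at bounded Euclidean speed in $\RR^{1+2}$, so a trajectory of duration $a$ has $\phi$-image contained in a Euclidean ball of radius $\sqrt{2}\,a$ about its starting point. Properness pulls this compact set back to a compact subset of $\RR^2$, and standard ODE theory then forbids finite-time blow-up of either flow.

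For the assembly, let $\Phi_t, \Psi_s$ denote the flows of $\tilde V, \tilde W$ respectively. Since $u \circ \Phi_t = u + t$ and $u \circ \Psi_s = u$, the map $(t, p) \mapsto \Phi_t(p)$ is a smooth diffeomorphism $\RR \times u^{-1}(0) \to \RR^2$, forcing $u^{-1}(0)$ to be connected; consequently every $u^{-1}(t) = \Phi_t(u^{-1}(0))$ is connected. Each orbit of $\Psi_s$ is simultaneously open (by the flow-box theorem) and closed (its complement in the level set is a union of other open orbits), hence coincides with the entire connected component of its level set, namely with $u^{-1}(t)$. Fixing $p_0 \in u^{-1}(0)$ and setting $\psi(s, t) := \Psi_s(\Phi_t(p_0))$ then produces a smooth bijection $\RR^2 \to \RR^2$, and its differential has linearly independent columns because $du(\dd_s\psi) = du(\tilde W) = 0$ while $du(\dd_t\psi) = du(d\Psi_s \cdot \tilde V) = du(\tilde V) = 1$. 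Hence $\psi$ is a global diffeomorphism, $\phi \circ \psi$ has zeroth coordinate $u \circ \psi = t$, and its $s$-derivative equals $d\phi(\tilde W) = W = (0, \vec W)$, so $|\gamma_s|^2 = |\vec W|^2 = 1$. The main obstacle in this outline is the completeness step: without the timelike bound $|\vec V| < V^0$ the transverse field $\tilde V$ could a priori escape to infinity in finite $u$-time, and it is precisely this bound, together with properness, that closes the argument.
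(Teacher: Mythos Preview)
Your argument is correct and follows the same Morse-theoretic outline as the paper: $\phi^0$ has no critical points, so one flows along a complete vector field transverse to its level sets, with completeness coming from properness. The paper takes the $g$-gradient $T = \nabla_g\phi^0 / g(\nabla_g\phi^0,\nabla_g\phi^0)$ as its transverse field, then parameterizes $C_0$ arbitrarily and imposes $|\gamma_s|=1$ only by an arclength reparameterization at the very end; your choice of $\tilde W$ with $d\phi(\tilde W)$ already a spatial unit vector builds this condition in from the start, which is a bit cleaner. Your completeness bound $|V/V^0|_{\mathrm{Eucl}}<\sqrt 2$ is essentially a repackaging of the paper's light-cone confinement argument. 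One small point you leave implicit: injectivity of $\psi$ in $s$ needs each level set $u^{-1}(t)$ to be diffeomorphic to $\RR$ rather than $S^1$; this follows immediately from your diffeomorphism $\RR\times u^{-1}(0)\cong\RR^2$, which forces $u^{-1}(0)$ (and hence every $u^{-1}(t)=\Phi_t(u^{-1}(0))$) to be simply connected.
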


\begin{lem}[Existence of global isothermal coordinates]\label{lemma2}
Let $\phi \colon \RR^2 \to \EE^{1+2}$ be a smooth, proper, timelike immersion with vanishing mean curvature. Then there exists a smooth diffeomorphism $\psi \colon \RR^2 \to \RR^2$ such that $\phi'=\phi \circ \psi$ is of the form $\phi'(s,t) = (t, \gamma^1(s,t), \gamma^2(s,t))$ where $\gamma=(\gamma^1,\gamma^2)$ satisfies
\eq{\label{orthogcondition1}\la \gamma_s , \gamma_t \ra &= 0 \\
\label{orthogcondition2}|\gamma_s|^2 + |\gamma_t|^2 &=1 \\ 
\label{waveequation}\gamma_{tt} - \gamma_{ss} &=0.}
\end{lem}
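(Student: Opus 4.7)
The plan is to adapt the construction of \cite{BHNO} by first producing global null coordinates on $\RR^2$ and then rescaling them so that their sum becomes $x^0 \circ \phi$.

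First I apply Lemma \ref{lemma1} to reduce to the gauge $\phi(s,t)=(t,\gamma(s,t))$ with $|\gamma_s|^2=1$. Writing $p=\la\gamma_s,\gamma_t\ra$ and $q=-1+|\gamma_t|^2$, the induced metric is $g=ds^2+2p\,ds\,dt+q\,dt^2$, the timelike condition is $p^2-q>0$, and the two null line fields on $\RR^2$ are spanned by $X_\pm=\lambda_\pm\dd_s+\dd_t$ with $\lambda_\pm=-p\pm\sqrt{p^2-q}$. The key observation for the non-compact setting is that $\phi_*X_\pm=(1,\lambda_\pm\gamma_s+\gamma_t)$ has time-component identically $1$. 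Consequently, along any $X_\pm$-integral curve the parameter $t$ may be used globally, and the image in $\EE^{1+2}$ is a null curve $(t,Y(t))$ with $|Y'(t)|=1$, which is spatially $1$-Lipschitz. Hence the image lies in a compact slab over any bounded $t$-interval; properness of $\phi$ then confines the integral curve itself to a compact subset of $\RR^2$, and it extends for all $t\in\RR$.

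Now I define $u(s,t)$ (respectively $v(s,t)$) as the value of $s_0$ for which the $X_+$-integral curve (respectively $X_-$-integral curve) through $(s_0,0)$ passes through $(s,t)$. Both are smooth with $u(s,0)=v(s,0)=s$, and using $u_t=-\lambda_+u_s$, $v_t=-\lambda_-v_s$ one finds $\det D(u,v)=2\sqrt{p^2-q}\,u_sv_s>0$ (with $u_s,v_s>0$ by the variational equation for the flows), so $\Psi:=(u,v)$ is a local diffeomorphism. Injectivity of $\Psi$ is immediate: representing leaves as graphs $s=s_+(t;u_0)$ and $s=s_-(t;v_0)$, the separation $f(t)=s_+(t;u_0)-s_-(t;v_0)$ satisfies $f'=\lambda_+-\lambda_->0$, so any two opposing leaves meet in at most one point. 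Assuming $\Psi$ is in fact a global diffeomorphism onto $\RR^2$ (see the obstacle below), the pulled-back metric in $(u,v)$ has the null form $2m\,du\,dv$, the maximal surface equation reduces to $\phi_{uv}=0$, and $\phi(u,v)=A(u)+B(v)$ with $A'(u),B'(v)$ null in $\EE^{1+2}$; since $\phi_*\dd_u\propto\phi_*X_-$ and $\phi_*\dd_v\propto\phi_*X_+$ both have nonvanishing $0$-component, $(A^0)',(B^0)'\ne 0$ everywhere, and completeness of the null curves makes $A^0,B^0$ diffeomorphisms $\RR\to\RR$. Finally I reparameterize $\tilde u=2A^0(u)$, $\tilde v=2B^0(v)$ and set $s'=(\tilde u-\tilde v)/2$, $t'=(\tilde u+\tilde v)/2$: then $\phi^0=t'$, the nullity of $A',B'$ translates to \eqref{orthogcondition1}--\eqref{orthogcondition2}, and $\phi_{\tilde u\tilde v}=0$ becomes the wave equation \eqref{waveequation}.

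The main obstacle is showing that $\Psi$ is surjective, i.e.\ that any pair of opposing null leaves actually meets in $\RR^2$. Openness of $\Psi(\RR^2)$ is clear and it contains the diagonal $\{u=v\}$; what must be shown is closedness. In the spatially compact (cylinder) case of \cite{BHNO} this is handled via compactness, but in the present non-compact case one must rule out pairs of leaves whose separation $f$ is bounded on $\RR$, which would occur if the two null foliations became asymptotically parallel. I expect this to be ruled out by combining properness of $\phi$ with the rigidity imposed by vanishing mean curvature --- concretely, by a blow-up/limit argument showing that a boundary point $(u_0,v_0)\in\partial\Psi(\RR^2)$ would give rise to two complete null leaves asymptoting to one another, a configuration incompatible with the d'Alembert structure $\phi=A(u)+B(v)$ induced by the wave equation.
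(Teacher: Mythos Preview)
Your approach via global null coordinates is genuinely different from the paper's, and the gap you flag is real and not closed by what you have written. The paper never attempts to show that the two null foliations meet pairwise. Instead it keeps $t'=t$ throughout and only reparametrises the spatial variable: one solves the single linear transport equation
\[
\frac{\partial s'}{\partial t}-\langle\gamma_s,\gamma_t\rangle\,\frac{\partial s'}{\partial s}=0,
\]
whose characteristics are the \emph{timelike} curves $g$-orthogonal to $\{t=\text{const}\}$ rather than the null curves. Their slope is $-\langle\gamma_s,\gamma_t\rangle$, so global existence and bijectivity at each fixed $t$ follow from the a~priori bound $|\langle\gamma_s,\gamma_t\rangle|<1$ in the Lemma~\ref{lemma1} gauge. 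Only after orthogonality is in place does the paper invoke the maximal condition: the $\phi^0$-component of \eqref{gaussweingarten} gives the conservation law $\partial_{t'}\sqrt{|\gamma'_{s'}|^2/(1-|\gamma'_{t'}|^2)}=0$, and choosing the Cauchy data $\rho$ of the transport equation so that this conserved quantity equals $1$ at $t'=0$ yields \eqref{orthogcondition2}; the wave equation \eqref{waveequation} then drops out of the remaining components.

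By contrast, your argument uses $H=0$ only \emph{after} assuming $\Psi$ is a global diffeomorphism, so as written the surjectivity step makes no use of the maximal hypothesis. That cannot work: as the paper remarks (citing Kulkarni), simply connected Lorentzian surfaces admit infinitely many conformal types, so there is no hope of obtaining global null coordinates from timelikeness and properness alone. Your closing sketch (``blow-up/limit argument \dots\ incompatible with the d'Alembert structure'') would have to feed the wave equation back into the surjectivity step, and you have not indicated how. A smaller point: your injectivity argument asserts $f'=\lambda_+-\lambda_->0$, but $\lambda_+$ and $\lambda_-$ are evaluated on different leaves, so this does not follow directly; what is true (and sufficient) is that $f'>0$ at any zero of $f$, since there the two leaves pass through the same point.
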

\begin{proof}[Proof of Lemma \ref{lemma1}.]
The proof is a standard argument exploiting the fact that $\phi^0$ is a Morse function. Let $\phi \colon \RR^2 \to \EE^{1+2}$ be a smooth, proper, timelike immersion. For each $t\in \mathrm{Im}(\phi^0)$ write
\al{ C_t = \{ (y^1, y^2) \in \RR^2 \colon \phi^0(y^1,y^2) = t\} .}
Since $\phi$ is timelike, $\phi^0$ can have no critical points. Thus $C_t$ is a smooth submanifold of $\RR^2$ for all $t\in \mathrm{Im}(\phi^0)$ by the implicit function theorem. 

Let $g=\phi^*\eta$ be the induced Lorentzian metric on $\RR^2$, and let $X=\nabla_g\phi^0$, which is a smooth, nowhere-vanishing vector field on $\RR^2$. $\phi(C_t)=\mathrm{Im}(\phi) \cap \{ x^0=t \}$ is spacelike, so with respect to $g$, the submanifolds $C_t$ are spacelike, and thus $X$ is a timelike vector field orthogonal to the submanifolds $C_t$. 

Define $T = \frac{1}{g(X,X)} X$, and consider the flow of $T$. Let $p\in\RR^2$, and let $\xi_p \colon (a,b) \to \RR^2$, be the smooth, inextendible integral curve of $T$ through $p$, so $\frac{d \xi_p}{d s} (s)= T(\xi_p(s))$ and $\xi_p(0)=p$. Then $\frac{d}{d s} \left( \phi^0(\xi_p(s)) \right) = (d \phi^0 )_{\xi_p(s)}(T(\xi_p(s))) =1$ and so
\eq{\label{tequation}\phi^0(\xi_p(s))=\phi^0(p) + s.}
We claim that $b=\infty$ and $a=-\infty$. Indeed, suppose we had $b<\infty$. Since the curve $\xi_p$ is timelike, and by \eqref{tequation}, then $\phi(\xi_p([0,b)))$ would lie in the intersection of the time slab $0\leq t \leq b$ with the future-directed light cone with vertex at the point $\phi(p)$, i.e.\ those points $(x^0,x^1,x^2)\in\RR^3$ such that
\al{(x^1-\phi^1(p))^2 + (x^2 - \phi^2(p))^2 \leq (x^0 - \phi^0(p))^2 \\
\phi^0(p) \leq x^0 \leq \phi^0(p)+b, }
which is a compact set. Since $\phi$ is a proper map, it would follow that the curve $\xi_p([0,b))$ would lie in a compact set. As $T$ is smooth, it would then follow that $\xi_p$ could then be smoothly extended up to $s=b$, contradicting inextendibility of $\xi_p$. So $b=\infty$ and similarly $a=-\infty$. 

From \eqref{tequation}, it is seen that the flow $p\mapsto \xi_p(t)$ maps $C_0$ diffeomorphically onto $C_t$ for each $t$, thus we have shown $\mathrm{Im}(\phi^0)=\RR$, and we have a foliation of $\RR^2$ given by smooth curves $C_t$ for $t\in \RR$. We claim that each $C_t$ is connected. Indeed, for $p,q\in C_0$, let $\omega \colon [0,1] \to \RR^2$ be a continuous path with $\omega(0)=p$, $\omega(1)=q$. Define $\hat{\omega}(s)=\xi_{\omega(s)}(-\phi^0(\omega(s)))$, so $\hat{\omega}(s)\in C_0$ for all $s\in [0,1]$ by \eqref{tequation}, and $\hat{\omega}$ is a continuous path with $\hat{\omega}(0)=p$ and $\hat{\omega}(1)=q$. Thus $C_0$ and hence each $C_t$ is connected.

Let $C_0$ be given some parameterisation as $C_0(s)$ for $s\in(-\infty, \infty)$, and define $\psi \colon \RR^2 \to \RR^2$ by
\al{ \psi(s,t) = \xi_{C_0(s)}(t).} 
By the group property of the flow, it is seen that $\psi$ gives a bijection. Standard results on smooth dependence on initial conditions for ODE show that $\psi$ gives a smooth map, and since $T$ is nowhere vanishing and orthogonal to $C_0$ we have $\det(d\psi)(s,0)\neq 0$ and so it follows $\det(d\psi)(s,t)\neq 0$ for all $(s,t)\in \RR^2$, see eg. \cite[Chapter 1]{codandlev}. Thus $\psi$ is a diffeomorphism, and we have $\phi'=\psi \circ \phi$ satisfies $\phi'(s,t) = (t, \gamma^1(s,t), \gamma^2(s,t))$. Finally, since $\phi$ is proper, it follows that $|\gamma(s,t)|^2 \to \infty$ as $s \to \pm \infty$ for each $t$. Thus we may pass to an arclength reparameterisation for each $t$ to ensure the condition $|\gamma_s(s,t)|^2 =1$. 
\end{proof}
\begin{proof}[Proof of Lemma \ref{lemma2}.]
By applying Lemma \ref{lemma1}, we may assume that $\phi$ is of the form 
\al{\phi(s,t)=(t,\gamma^1(s,t), \gamma^2(s,t))} 
where $|\gamma_s|^2=1$. Since $\phi$ is timelike, we have the bound $|\gamma_t|^2 <1$. 

Now, let $s'=s'(s,t)$, $t'=t$ denote a smooth coordinate change, with $\frac{\dd s'}{\dd s} >0$, and set $\gamma'(s', t')=\gamma(s,t)$. We will choose these new coordinates so that 
\eq{\label{orthogcondition}\la \gamma'_{s'}, \gamma'_{t'} \ra =0.} 
By the chain rule: 
\eq{\label{chain1}\gamma'_{s'}&= \left( \frac{\dd s'}{\dd s} \right)^{-1} \gamma_s \\
\label{chain2} \quad\quad \gamma'_{t'}&= -\left(\frac{\dd s' }{\dd s} \right)^{-1}\left( \frac{\dd s'}{\dd t} \right) \gamma_s + \gamma_t.} 
Substituting expressions \eqref{chain1} and \eqref{chain2}, and observing $|\gamma_s|^2=1$, we see that \eqref{orthogcondition} will be satisfied provided
\eq{\label{transporteq}\frac{\dd s'}{\dd t} - \la \gamma_s , \gamma_t \ra \frac{\dd s'}{\dd s} =0.}
This is a linear transport equation, and may be solved by the method of characteristics. The solution $s'$ is constant along characteristic curves $(s(t),t)$, where the $s(t)$ are solutions to
\eq{\label{characteristicequation}\dot{s}(t)=-\la \gamma_s(s(t),t), \gamma_t(s(t),t) \ra.} 
Since the right hand side of \eqref{characteristicequation} is smooth, and since we have the a-priori bound 
\eq{\label{aprioribound}|\la \gamma_s, \gamma_t \ra|<1,} 
smooth solutions to \eqref{characteristicequation} exist for all $t\in\RR$, and for each $(s_0,t_0)$, there exists a unique characteristic through $(s_0,t_0)$ which crosses through the line $\{t=0\}$ precisely once. Thus for any smooth function $\rho\colon \RR \to \RR$, there is a unique smooth solution $s'$ to \eqref{transporteq} satisfying the Cauchy data 
\al{s'(s,0) = \rho(s).} 
The choice of Cauchy data $\rho$ will be fixed later. For now, observe that that the condition $\frac{\dd s'}{\dd s} > 0$ is equivalent to 
\eq{\label{diffcondition1}\dot{\rho}(s)>0,}
and, by the uniform bound on the characteristic speed \eqref{aprioribound}, we have $s'(s,t) \to \pm \infty$ as $s\to \pm \infty$ for each $t$ provided 
\eq{\label{diffcondition2} \rho(s) \to \pm \infty} 
as $s \to \pm \infty$. A smooth diffeomorphism $\psi \colon \RR^2 \to \RR^2$ is thus well defined by $\psi^{-1}(s,t)=(s'(s,t),t)$ provided $\rho$ is chosen so that \eqref{diffcondition1} and \eqref{diffcondition2} hold. 

We have now verified \eqref{orthogcondition} (which is \eqref{orthogcondition1} in the $(s',t')$ coordinates), and we proceed to show that $\rho$ may be selected satisfying \eqref{diffcondition1} and \eqref{diffcondition2}, so as to ensure \eqref{orthogcondition2} and \eqref{waveequation}. From \eqref{gaussweingarten}, the maximal surface equations read
\eq{\label{GW1} \dd_i( \sqrt{|\det(g)|} g^{i 2}) &=0 \\
\label{GW2}\dd_i( \sqrt{|\det(g)|} g^{ij} \dd_j \gamma) &=0.}
Since the metric in the new coordinates is
\al{ g(s',t')= |\gamma'_{ s'}|^2 ds'^2 + (-1 + |\gamma'_{t'}|^2) dt'^2}
the first of these reads
\al{\dd_{t'}\sqrt{\frac{|\gamma'_{s'}|^2}{1- |\gamma'_{t'}|^2}}=0}
which is equivalent to ${|\gamma'_{s'}(s',t')|^2}=K(s')^2({1- |\gamma'_{t'}(s',t')|^2})$. Thus the condition
\al{|\gamma'_{s'}|^2 + |\gamma_{t'}|^2 =1}
will follow provided $\rho(s)$ is chosen such that $|\gamma'_{s'}(s',0)|^2+|\gamma'_{t'}(s',0)|^2=1$ (i.e $K(s')^2=1$). From \eqref{chain1}, \eqref{chain2} and \eqref{transporteq} we have
\al{|\gamma'_{s'}(s',0)|^2+|\gamma'_{t'}(s',0)|^2&=|\dot{\rho}(s)^{-1} \gamma_s(s,0)|^2 \\
&\hskip25pt +|\gamma_t(s,0)-\la \gamma_s(s,0), \gamma_t(s,0) \ra \gamma_s(s,0)|^2 \\
&=\dot{\rho}(s)^{-2} + |\gamma_t(s,0)|^2 - \la \gamma_s(s,0), \gamma_t(s,0) \ra^2}
which equals $1$ provided
\al{\dot{\rho}(s)&=(1-|\gamma_t(s,0)|^2 + \la \gamma_s(s,0), \gamma_t(s,0) \ra^2)^{-1/2} \\
&=|\det(g(s,0))|^{-1/2}.}
Since $\phi$ is timelike, this ensures \eqref{diffcondition1} and moreover by the bound 
\al{0<|\det(g(s,t))|\leq 1} 
we see
\al{ \rho(s)= \int_*^s (|\det(g(s,0))|)^{-1/2} \,ds \to \pm \infty}
as $s \to \pm\infty$, which is \eqref{diffcondition2}. We have ensured \eqref{orthogcondition1} and \eqref{orthogcondition2}, and as the metric now reads 
\al{g(s',t')=|\gamma'_{s'}(s',t')|^2\left( ds'^2- dt'^2\right),} 
the equation $\gamma'_{t't'} -\gamma'_{s's'}=0$ follows from \eqref{GW2}. This completes the proof.
\end{proof}

\section{Embeddedness of maximal surfaces}\label{embeddednesssection}
In this section we give the proof of Theorem \ref{maintheorem}, as well as examples of both graphical and non-graphical timelike maximal surfaces. The latter examples show that the restriction to compact subsets in Theorem \ref{maintheorem} cannot be relaxed in general.
\subsection{Proof of Theorem \ref{maintheorem}}
In light of Lemma \ref{lemma2}, consider a smooth, proper, timelike immersion $\phi \colon \RR^2\to \EE^{1+2}$ of the form
\eq{\label{specialform}\phi(s,t) = (t, \gamma^1(s,t), \gamma^2(s,t))}
where $\gamma=(\gamma^1, \gamma^2)$ satisfies
\eq{\label{orthogcondition11} \la \gamma_s, \gamma_t\ra &=0 \\
\label{orthogcondition12} |\gamma_s|^2+|\gamma_t|^2 &=1 \\
\label{waveequation2} \gamma_{tt}-\gamma_{ss} &= 0.}
Define
\eq{\label{aplusminus} a_\pm(s)= \gamma_t(s,0) \pm \gamma_s(s,0),}
so that $|a_\pm(s)|^2 =1$ by \eqref{orthogcondition11}, \eqref{orthogcondition12}. $a_\pm$ give the spatial directions of the outgoing and incoming null tangent vectors to $\phi(\RR^2)$ along the initial curve $\phi(\cdot, 0)$. The following Lemma shows that the images of the outgoing and incoming null directions must be disjoint for a smooth, timelike, properly immersed maximal surface. 
\begin{lem}\label{dalembertlemma}
Let $\phi \colon \RR^2 \to \EE^{1+2}$ be a smooth, proper, timelike immersion of the form \eqref{specialform}, where $\gamma$ satisfies \eqref{orthogcondition11}--\eqref{waveequation2} and define $a_\pm$ by \eqref{aplusminus}. Then $a_+(\xi) \neq a_-(\eta)$ for all $\xi, \eta \in \RR$. 
\end{lem}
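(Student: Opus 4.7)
My approach would be to invoke d'Alembert's formula on the vector-valued wave equation \eqref{waveequation2} to decompose $\gamma(s,t) = F(s+t) + G(s-t)$ for some smooth $F, G \colon \RR \to \RR^2$. A direct computation of $\gamma_s$ and $\gamma_t$ at $t=0$, combined with the definition \eqref{aplusminus}, then identifies
\al{ a_+(\xi) = 2 F'(\xi), \qquad a_-(\eta) = -2 G'(\eta). }
The isothermal conditions \eqref{orthogcondition11}--\eqref{orthogcondition12} manifest as $|F'| \equiv |G'| \equiv \tfrac12$, consistent with $|a_\pm| = 1$.

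The key identity is then
\al{ \gamma_s(s, t) = F'(s+t) + G'(s-t) = \tfrac{1}{2}\bigl( a_+(s+t) - a_-(s-t) \bigr), }
which exhibits $a_+(\xi) - a_-(\eta)$ as (twice) the spatial tangent vector $\gamma_s$ evaluated at the unique point whose null coordinates are $(\xi, \eta)$, namely $(s_0, t_0) = \bigl( (\xi+\eta)/2, (\xi-\eta)/2 \bigr)$. Suppose toward contradiction that $a_+(\xi) = a_-(\eta)$ for some $\xi, \eta \in \RR$. Then $\gamma_s(s_0, t_0) = 0$, and hence the tangent vector $\phi_s(s_0, t_0) = (0, \gamma_s(s_0, t_0))$ vanishes in $\RR^{1+2}$, contradicting the hypothesis that $\phi$ is an immersion. (Equivalently, one could note that $\gamma_s(s_0, t_0) = 0$ forces $g_{ss}(s_0, t_0) = g_{st}(s_0, t_0) = 0$, so $\det g(s_0, t_0) = 0$, contradicting timelikeness.)

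I do not anticipate a serious obstacle here: once d'Alembert is invoked, the conclusion is a one-line consequence of the fact that $\gamma_s$ along the surface is precisely the difference of $a_+$ and $a_-$ evaluated along null lines. The hypotheses actually used in this step are only the wave equation, the isothermal normalisations (to guarantee $a_\pm$ are unit vectors and that $\gamma_s$ controls $\det g$ cleanly), and the immersion property; properness of $\phi$ is not needed for this lemma, but was required upstream in Lemma \ref{lemma2} in order to bring $\phi$ into the form \eqref{specialform} in the first place.
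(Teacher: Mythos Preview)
Your proposal is correct and follows essentially the same route as the paper: d'Alembert's formula yields $\gamma_s(s,t)=\tfrac{1}{2}\bigl(a_+(s+t)-a_-(s-t)\bigr)$, and the immersion hypothesis forces this to be nonzero everywhere. Your observation that properness is not used in this step (only upstream to obtain the isothermal form) is also accurate.
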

\begin{proof}
Since $\gamma$ satisfies the wave equation \eqref{waveequation2}, we have d'Alembert's formula
\eq{\label{dalembert} \gamma(s,t)=\frac{1}{2} \left( \gamma(s+t,0) + \gamma(s-t,0) + \int_{s-t}^{s+t} \gamma_t(\xi,0) \,d\xi \right).}
Differentiating gives
\eqq{\label{dalembertdifferentiated}\gamma_s(s,t)&= \frac{1}{2} \left( \gamma_s(s+t,0) + \gamma_s(s-t,0) + \gamma_t(s+t,0) - \gamma_t(s-t,0) \right) \\
&= \frac{1}{2}\left( a_+(s+t) - a_-(s-t) \right).}
Since $\phi$ is an immersion, $\gamma_s(s,t)\neq 0$ for all $(s,t)\in \RR^2$, and thus $a_+(\xi)\neq a_-(\eta)$ for all $\xi,\eta\in \RR$.
\end{proof}
\begin{lem}\label{geometrylemma}
Let $M>0$ and let $a_\pm \colon [-M,M] \to \RR^2$ be smooth functions satisfying $|a_\pm|^2=1$ and $a_+(\xi)\neq a_-(\eta)$ for all $\xi,\eta \in [-M,M]$. Then there exists $\omega \in \RR^2$, $|\omega|^2=1$, such that
\eq{\la a_+(\xi) - a_-(\eta), \omega \ra > 0 }
for all $\xi,\eta \in [-M,M]$.
\end{lem}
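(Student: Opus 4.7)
The plan is to reduce the problem to a hyperplane-separation argument on the unit circle $S^1\subseteq\RR^2$, whose success rests crucially on the connectedness of the domain $[-M,M]$.

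First I would set $A_\pm:=a_\pm([-M,M])\subseteq S^1$. Since $[-M,M]$ is compact and connected and $a_\pm$ is continuous, each $A_\pm$ is a nonempty, compact, connected subset of $S^1$, and hence is either a single point or a closed arc. The disjointness $A_+\cap A_-=\emptyset$ together with $A_\mp\neq\emptyset$ rules out $A_\pm=S^1$, so both arcs are proper. Connectedness is essential here: without it the statement can fail, as the example $A_+=\{(\pm 1,0)\}$, $A_-=\{(0,\pm 1)\}$ illustrates.

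Next I would apply a rotation of $\RR^2$ (an isometry preserving both $|\cdot|$ and $\la\cdot,\cdot\ra$) so as to place $A_+$ symmetrically about $(1,0)$. After rotation one has $A_+=\{(\cos\varphi,\sin\varphi):\varphi\in[-\alpha,\alpha]\}$ for some $\alpha\in[0,\pi)$, and since $A_-$ is connected and disjoint from $A_+$ it must lie in one of the two components of $S^1\setminus A_+$, i.e.\ $A_-\subseteq\{(\cos\varphi,\sin\varphi):\varphi\in(\alpha,2\pi-\alpha)\}$.

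Finally I would take $\omega=(1,0)$. For $a\in A_+$ with angle $\varphi_a\in[-\alpha,\alpha]$, $\la a,\omega\ra=\cos\varphi_a\geq\cos\alpha$. For $b\in A_-$ the angle $\varphi_b$ lies in the open interval $(\alpha,2\pi-\alpha)$; since $\cos$ attains $\cos\alpha$ on $[\alpha,2\pi-\alpha]$ only at the endpoints, and $A_-$ is compact and strictly interior to this open interval, one has $\max_{b\in A_-}\la b,\omega\ra<\cos\alpha\leq\min_{a\in A_+}\la a,\omega\ra$. This yields $\la a_+(\xi)-a_-(\eta),\omega\ra>0$ for all $\xi,\eta\in[-M,M]$.

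The only (mild) technical point is the observation about the maximum of $\cos$ on the closed interval $[\alpha,2\pi-\alpha]$ being attained only at the endpoints, which combined with compactness of $A_-$ in the open subinterval yields the strict separation; everything else is direct once the two arcs are arranged on ``opposite'' sides of $S^1$ by the rotation.
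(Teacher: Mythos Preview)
Your argument is correct and is essentially the paper's own proof: both identify $A_+$ as a proper closed arc, take $\omega$ to be its midpoint direction (you do this by rotating so the midpoint is $(1,0)$, the paper writes $\omega=(\cos\tfrac{\alpha_1+\alpha_2}{2},\sin\tfrac{\alpha_1+\alpha_2}{2})$ directly), and then observe that $\la\cdot,\omega\ra$ is strictly larger on $A_+$ than on any point of its complement. One small slip: $S^1\setminus A_+$ has \emph{one} connected component, not two, so connectedness of $A_-$ is not actually needed---but your stated containment $A_-\subseteq\{(\cos\varphi,\sin\varphi):\varphi\in(\alpha,2\pi-\alpha)\}$ is correct regardless.
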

\begin{proof}
$A=\mathrm{Im}(a_+)$ is a non-empty, connected, closed, proper subset of $S^1$, so we may write 
\al{A=\{(\cos \alpha, \sin \alpha) \colon \alpha\in [\alpha_1,\alpha_2]\}.}
Defining $\omega= (\cos \frac{\alpha_1+\alpha_2}{2}, \sin \frac{\alpha_1+\alpha_2}{2})$, it follows from trigonometry that $\la a, \omega \ra > \la b, \omega \ra$ for all $a\in A$, $b \in S^1\setminus A$. Since it is assumed $\mathrm{Im}(a_-) \subseteq S^1\setminus A$, the claim is proved.
\end{proof}
We now have the tools to hand to prove Theorem \ref{maintheorem}.
\begin{proof}[Proof of Theorem \ref{maintheorem}]
Let $\phi \colon \RR^2 \to \EE^{1+2}$ be a smooth, proper, timelike immersion with vanishing mean curvature. By Lemma \ref{lemma2}, we may take $\phi$ to be of the form $\phi(s,t)=(t,\gamma(s,t))$ where $\gamma$ satisfies \eqref{orthogcondition11}--\eqref{waveequation2}. 
 
Let $M>0$ and define the characteristic diamond
\eq{D_M=\left\{ (s,t) \colon |s|+|t| \leq M \right\} \subseteq \RR^2.}
To prove the theorem, we will show that $\phi|_{D_M}$ is injective and $\phi(D_M)$ is a smooth graph over a timelike plane $P_M$. Since $M$ is arbitrary, from this it will follow that $\phi$ is injective, and thus an embedding. Since $\phi$ is proper, given any compact subset $K \subseteq \phi(\RR^2)$, we may choose $M$ sufficiently large such that $K\subseteq \phi(D_M)$, so that $K$ will be a smooth graph over the plane $P_M$. 

Defining $a_\pm$ as in \eqref{aplusminus}, by Lemma \ref{dalembertlemma} we have that $a_+(\xi)\neq a_-(\eta)$ for all $\xi, \eta \in \RR$. So by Lemma \ref{geometrylemma} there exists $\omega_M \in \RR^2$, $|\omega_M|^2=1$, such that 
\al{\la a_+(\xi) - a_-(\eta) , \omega_M \ra >0}
for all $\xi,\eta\in[-M,M]$. From \eqref{dalembertdifferentiated}, it follows
\eq{\label{easyinequality} \la \gamma_s(s,t) ,\omega_M \ra = \frac{1}{2}\la a_+(s+t) - a_-(s-t), \omega_M \ra >0}
for all $(s,t) \in D_M$.

From \eqref{easyinequality} it is now routine to show that $\phi|_{D_M}$ is an embedding and there is a timelike plane $P_M\subseteq \EE^{1+2}$ such that $\phi(D_M)$ is a smooth graph over $P_M$, but we will go through the argument for completeness. Rotating coordinates on $\RR^{1+2}$ as necessary, we may assume for convenience that $\omega_M=(1,0)$. Then, in the new coordinates, keeping the same notation for the parameterisation, \eqref{easyinequality} reads
\eq{\label{easyinequality2} \gamma^1_s(s,t) >0}
for all $(s,t) \in D_M$. Let $P_M$ be the $x^0$--$x^1$ plane in these new coordinates.

Write $D'_M=\{(t,\gamma^1(s,t)) \colon (s,t)\in D_M\}\subseteq \RR^2$, and let $F\colon D_M \to D'_M$ be given by $F(s,t)=(t,\gamma^1(s,t))$. From \eqref{easyinequality2} it follows by monotonicity that $F$ is bijective, and moreover by the inverse function theorem that $F$ is a smooth diffeomorphism. Inverting $F$ as $F^{-1}(x^0,x^1)=(s(x^0,x^1), t(x^0,x^1))$ gives
\eqq{\label{graphform}\phi(D_M)&= \phi\circ F^{-1} (D'_M) \\
&= \left\{ \left(x^0, x^1, \gamma^2(s(x^0,x^1), t(x^0,x^1))\right) \colon (x^0,x^1) \in D'_M \right\} }
so we have shown $\phi(D_M)$ is a smooth graph over the $x^0$--$x^1$ plane. Moreover, it follows from \eqref{graphform} that $\phi \circ F^{-1} \colon  D'_M \to \EE^{1+2}$ is injective, so $\phi|_{D_M}$ is injective. This completes the proof.
\end{proof}
\subsection{Examples of graphical and non-graphical smooth properly embedded timelike maximal surfaces}\label{graphsection}
\begin{ex}[Smooth, properly embedded, graphical timelike maximal surfaces]\label{graphexample}
Let $f\colon \RR \to \RR$ be any smooth function, and let $G=\{(u,f(u)) \colon u\in \RR\} \subseteq \RR^2 $ be the graph of $f$. Let $c\colon \RR\to \RR^2$ be a smooth parameterisation of $G$ by arclength, so that $\mathrm{Im}(c)=G$ and $|\dot{c}(s)|=1$. Define $\gamma(s,t)=\frac{1}{2} \left( c(s+t) + c(s-t) \right)$ and $\phi \colon \RR^2 \to \EE^{1+2}$ by $\phi(s,t) = (t, \gamma(s,t))$. It may be checked that $\phi$ defines a smooth, proper, timelike embedding with vanishing mean curvature, and $\phi(\RR^2)$ is a smooth graph over the $x^0$--$x^1$ plane with $\phi(\RR^2)\cap\{x^0=0\}=G$.
\end{ex}
\begin{ex}[Smooth, properly embedded, doubly periodic graphical timelike maximal surfaces]
Let $f\colon \RR \to \RR$ be a smooth function such that $f(0)=0$, and $f(u)=f(u+1)$ for all $u\in \RR$ (i.e.\ $f$ is periodic). As in Example \ref{graphexample}, let $c \colon \RR \to \RR^2$ parametrize the graph of $f$ by arclength, and define $\gamma(s,t)=\frac{1}{2} \left( c(s+t) + c(s-t) \right)$ and $\phi \colon \RR^2 \to \EE^{1+2}$ by $\phi(s,t) = (t, \gamma(s,t))$.

Note that $c(s+L)=c(s)+(1,0)$, where $L$ is the length of one period of $f$. Necessarily $L\geq 1$ with equality if and only if $f\equiv 0$ (i.e.\ if and only if the graph of $f$ is a straight line). Then observe that $\phi(s+L,t)=\phi(s,t)+(0,1,0)$, and $\phi(s,t+L)=\phi(s,t)+(L,0,0)$. Thus defining $T\colon \EE^{1+2} \to \EE^{1+2}$ by $T(x^0,x^1,x^2)=(x^0+L, x^1,x^2)$ for a translation in time, and $R \colon \EE^{1+2} \to \EE^{1+2}$ by $R(x^0,x^1,x^2)=(x^0,x^1+1,x^2)$ for a translation in space, we see $\phi(\RR^2)$ is invariant under both $T$ and $R$. Thus $\phi(\RR^2)$ is periodic in the direction $(1,0,0)$ with period $L$, and periodic in the direction $(0,1,0)$ with period $1$. 

By acting on $\phi(\RR^2)$ by a combination of a rescaling and a Lorentz tranformation, it may be seen that, for any timelike vector $V\in \RR^{1+2}$, and spacelike vector $W\in \RR^{1+2}$ orthogonal to $V$, and for any pair of numbers $(a,b)$ with $a>b$, one may obtain smooth, non-planar, graphical timelike maximal surfaces which are periodic in the direction $V$ with period $a$, and periodic in the direction $W$ with period $b$.
\end{ex}
\begin{ex}[Smooth, properly embedded, non-graphical timelike maximal surfaces.]\label{nongraphexample}
Let $c\colon \RR \to \RR^2$ be a parametrisation of a smooth curve by arclength such that the following hold: 
\begin{enumerate}
\item $c(s)=(0,-s)$, for $s\in(-\infty,-1]$,
\item $\dot{c}^1(s)>0$ for $s\in(-1,\infty)$,
\item as $s\to \infty$, $\dot{c}(s) \to (0,1)$.
\end{enumerate}
See Figure \ref{grimreaper} for a rough illustration of such a curve. Every compact subset $K$ of $\mathrm{Im}(c)$ is a smooth graph, but $\mathrm{Im}(c)$ is not a smooth graph.
\begin{figure}[h]
\centering
 \captionsetup{width=.9\linewidth}
\includegraphics[width=.3\textwidth]{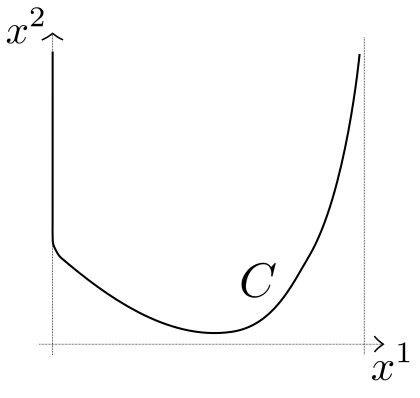}
\caption[A non-graphical curve for which every compact subset is a graph]{A smooth planar curve which is not a graph, for which every compact subset is a graph.}\label{grimreaper}
\end{figure}

Define $\gamma(s,t)=\frac{1}{2} \left( c(s+t) + c(s-t) \right)$ and $\phi \colon \RR^2 \to \EE^{1+2}$ by $\phi(s,t) = (t, \gamma(s,t))$. Then $\phi$ defines a smooth, proper, timelike embedding with vanishing mean curvature. For every compact subset $K\subseteq \phi(\RR^2)$, there is a timelike plane $P\subseteq \EE^{1+2}$ such that $K$ is a smooth graph over $P$, which is consistent with Theorem \ref{maintheorem}. We now claim that $\phi(\RR^2)$ is not a graph. To see this, observe that $\phi(s,t)=(t,0,-s)$ for $s\leq -1-|t|$, so $\phi(\RR^2)$ contains a closed quadrant $\bar{Q}=\{(t,0,-s)\colon s\leq -1-|t|\}$ of the plane $\{x^1=0\}$. For all $t\in \RR$, the curve $s\mapsto \phi(s,t)$ asymptotes to the plane $\{x^1=1\}$ as $s\to \infty$. It then follows that for every point $q$ in the interior of $\bar{Q}$, every straight line in $\RR^{1+2}$ through $q$ intersects $\phi(\RR^2)$ at at least 2 distinct points. Thus $\phi(\RR^2)$ is not a graph.

In this example, the image of the unit normal $N(\phi(\RR^2))$ is not contained in any open hemi-hyperboloid, but is contained in the union of an open hemi-hyperboloid with one connected component of its boundary.
\end{ex}

\section{$C^2$ inextendibility: Proof of Theorem \ref{noextensionlemma}}\label{c2inextendibilitysection}
For the rest of this article, we will be concerned with the question of whether it is possible to relax the notion of a maximal surface, either by allowing for surfaces which are $C^k$ for some $k\geq1$, or by allowing for null points (i.e.\ degenerate hyperbolicity), in such a way as to continue beyond singular time in a Cauchy evolution. 

Our first result in this direction will be that, if the evolution fails to remain timelike, then the maximal surface must fail to be $C^2$ immersed. In fact, we will deduce this from a broader observation which holds for more general evolutions of surfaces of only bounded mean curvature.
\begin{prop}\label{regularityloss}
Let $\Omega \subseteq \RR^2$ be an open bounded set such that for some $(s_0,t_0)\in \RR^2$ and some $\eps>0$, one has $\{s_0\}\times[t_0-\eps,t_0) \subseteq \Omega$ and $(s_0,t_0)\in \dd \Omega$. Let $\phi \colon \widebar{\Omega} \to \EE^{1+2}$ be a $C^1$ map such that $\phi|_\Omega$ is a $C^2$ timelike immersion, and such that $\phi$ is of the form $\phi(s,t)=(t,\gamma(s,t))$, where $\gamma$ satisfies $\la \gamma_s(s_0,t) ,\gamma_t (s_0,t) \ra =0$ for $t\in [t_0-\eps,t_0)$. Write $h$ for the mean curvature scalar of $\phi$, and $k(\cdot,t)$ for the curvature of the (planar) curves $\gamma(\cdot, t)$. Suppose $|h(s,t)|\leq C$ for $(s,t)\in \Omega$, and $|\gamma_t(s_0,t_0)|^2=1$ (so if $\phi$ is an immersion, then $\phi$ is null at $(s_0,t_0)$). Then
\eq{\label{curvatureintegral}\int_{t_0-\eps}^{t_0} |k(s_0,t)| dt = \infty.}
In particular, $\limsup_{t\nearrow t_0}|k(s_0,t)| = \infty$.
\end{prop}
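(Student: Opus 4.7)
The plan is to work along the vertical line $s=s_0$ and track the scalar
\[ u(t) := 1 - |\gamma_t(s_0,t)|^2, \]
which by the timelike assumption satisfies $u>0$ on $[t_0-\eps,t_0)$, and by the null hypothesis satisfies $u(t) \to 0$ as $t\nearrow t_0$. The goal is to establish the pointwise inequality
\[ |k(s_0,t)| \;\geq\; \frac{|u'(t)|}{2\,u(t)} - |h(s_0,t)|\,\sqrt{u(t)}, \]
from which \eqref{curvatureintegral} is immediate: the second contribution is bounded by $C\eps$ since $|h|\leq C$ and $u\leq 1$, while
\[ \int_{t_0-\eps}^{t_0} \frac{|u'|}{u}\,dt \;\geq\; \Bigl|\int_{t_0-\eps}^{t_0}\frac{u'}{u}\,dt\Bigr| \;=\; \bigl|\ln u(t_0^-) - \ln u(t_0-\eps)\bigr| \;=\; \infty. \]

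First I would exploit the hypothesis $\la \gamma_s,\gamma_t\ra = 0$ along $s=s_0$. Let $n(s,t)$ denote a unit normal to the planar curve $\gamma(\cdot,t)$, the sign chosen so that $\la n(s_0,t),\gamma_t(s_0,t)\ra = |\gamma_t(s_0,t)|$. The orthogonality forces $\gamma_t$ to be parallel to $n$ at $s=s_0$, so decomposing $\gamma_{tt} = \alpha\,\gamma_s/|\gamma_s| + \beta\,n$ with $\beta := \la \gamma_{tt},n\ra$, the tangential part drops out of $\la \gamma_{tt},\gamma_t\ra$ and one obtains
\[ u'(t) = -2\,\la \gamma_{tt}(s_0,t),\gamma_t(s_0,t)\ra = -2|\gamma_t|\,\beta(t). \]

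Next I would express $\beta$ in terms of $k$ and $h$. At $s=s_0$ the induced metric is diagonal, with $g_{ss}=|\gamma_s|^2$, $g_{tt}=-u$, $g_{st}=0$, and the spacelike unit normal to $\phi(\Omega)$ can be written $N = u^{-1/2}(|\gamma_t|,\,n)$. Computing $h = g^{ij}\la N,\partial_i\partial_j\phi\ra$ and using $\la n,\gamma_{ss}\ra = k\,|\gamma_s|^2$ yields
\[ h \;=\; \frac{k}{\sqrt u} - \frac{\beta}{u^{3/2}}, \qquad \text{hence}\qquad \beta \;=\; k\,u - h\,u^{3/2}. \]
Combined with the previous display and $|\gamma_t|\leq 1$, this gives $|u'|\leq 2|k|\,u + 2|h|\,u^{3/2}$, which rearranges to the desired pointwise bound on $|k|$.

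The main obstacle is the middle step: identifying $N$ and computing the mean-curvature scalar in the non-isothermal gauge, while carefully tracking the sign of $n$ relative to $\gamma_t$ so that $\la \gamma_{tt},\gamma_t\ra$ reduces to $|\gamma_t|\,\beta$. Once $\beta = ku - hu^{3/2}$ is in hand, the remaining logarithmic comparison is elementary and needs only $u\to 0$.
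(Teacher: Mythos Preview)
Your proposal is correct and takes essentially the same approach as the paper: both compute the diagonal first fundamental form along $s=s_0$, identify the spacelike unit normal $N=u^{-1/2}(|\gamma_t|,n)$, and derive the identity $\beta = ku \pm h\,u^{3/2}$ (the sign difference is a harmless convention on $h$), then feed $u'=-2|\gamma_t|\beta$ into the logarithmic divergence $\int u'/u = \ln u\to -\infty$. The paper writes the key identity as $\sqrt{u}\,h + k = \beta/u$ and integrates the right-hand side directly, whereas you rearrange to a pointwise lower bound on $|k|$; these are the same computation.
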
 
\begin{proof}
By taking $\eps$ sufficiently small, we may ensure that $|\gamma_t(s,t)|^2>0$ for $(s,t)\in \Omega \cap B_\eps(s_0,t_0)$. It may then be seen that a spacelike unit normal vector $N$ to $\phi(\Omega \cap B_\eps(s_0,t_0))$ is given along $\{s_0\}\times[t_0-\eps,t_0)$ by
\al{N(s_0,t)
	=  \frac{1}{(1-|\gamma_t(s_0,t)|^2)^{\nicefrac{1}{2}}}
		\left(\begin{array}{c}|\gamma_t(s_0,t)| \\ n(s_0,t) \end{array}\right)
 ,}
where 
\al{n(s_0,t)=\nicefrac{\gamma_t(s_0,t)}{|\gamma_t(s_0,t)|}} 
is a unit normal to the planar curve $\gamma(\cdot,t)$ at the point $s=s_0$.

The curvature of the cross sections $\gamma(\cdot,t)$ is given at $s=s_0$ by
\al{k(s_0,t)= \frac{\la\gamma_{ss}(s_0,t), n(s_0,t)\ra}{|\gamma_s(s_0,t)|^2}.}
Along $\{s_0\}\times[t_0-\eps,t_0)$, the components of the first fundamental form $E(s,t)ds^2+2F(s,t)dsdt+G(s,t)dt^2$ are calculated as
\al{ E(s_0,t)&= |\gamma_s(s_0,t)|^2 \\ 
F(s_0,t)&= \la\gamma_s(s_0,t), \gamma_t(s_0,t)\ra = 0 \\
G(s_0,t)&= -1+|\gamma_t(s_0,t)|^2,}
and the components of the second fundamental form $e(s,t)ds^2+2f(s,t)dsdt+g(s,t)dt^2$ are
\al{e(s_0,t)&= -\frac{\la \gamma_{ss}(s_0,t) , n(s_0,t) \ra}{(1-|\gamma_t(s_0,t)|^2)^{\nicefrac{1}{2}}} \\
f(s_0,t)&= -\frac{\la \gamma_{st}(s_0,t) , n(s_0,t) \ra}{(1-|\gamma_t(s_0,t)|^2)^{\nicefrac{1}{2}}} \\ 
g(s_0,t)&= -\frac{\la \gamma_{tt}(s_0,t) , n(s_0,t) \ra}{(1-|\gamma_t(s_0,t)|^2)^{\nicefrac{1}{2}}}.}
The mean curvature scalar is 
\eqq{\label{meancurvatureequation}h(s_0,t)&= \frac{e(s_0,t)}{E(s_0,t)} + \frac{g(s_0,t)}{G(s_0,t)} \\
 &= -\frac{\la \gamma_{ss}(s_0,t), n(s_0,t) \ra}{|\gamma_s(s_0,t)|^2(1-|\gamma_t(s_0,t)|^2)^{\nicefrac{1}{2}}}+ \frac{\la \gamma_{tt}(s_0,t), n(s_0,t) \ra }{(1-|\gamma_t(s_0,t)|^2)^{\nicefrac{3}{2}}},}
and rearranging \eqref{meancurvatureequation} gives the identity
\eq{\label{rearrangedcurvature}(1-|\gamma_t(s_0,t)|^2)^{\nicefrac{1}{2}} h(s_0,t) + k(s_0,t) = \frac{\la \gamma_{tt}(s_0,t) , n(s_0,t) \ra}{1-|\gamma_t(s_0,t)|^2}.}
Next we claim that
\eq{\label{integral}\int_{t_0-\eps}^{t_0} \frac{\la \gamma_{tt}(s_0,t) , n(s_0,t) \ra}{1-|\gamma_t(s_0,t)|^2} = \infty.}
To show \eqref{integral}, write $\mu(t)=|\gamma_t(s_0,t)|^2$, so that
\al{\frac{\la\gamma_{tt}(s_0,t) , n(s_0,t) \ra}{1-|\gamma_t(s_0,t)|^2}= \frac{\la\gamma_{tt}(s_0,t) , \gamma_t(s_0,t) \ra}{|\gamma_t(s_0,t)| (1-|\gamma_t(s_0,t)|^2)}=\frac{\frac{1}{2}\dot{\mu}(t)}{\mu(t)^{\nicefrac{1}{2}} (1-\mu(t))}.}
We have by assumption $\mu(t)\nearrow1$ as $t\nearrow t_0$, so
\al{\int_{t_0-\eps}^{t_0} \frac{\dot{\mu}(t)}{1-\mu(t)} dt = \int^{t_0}_{t_0-\eps} -\frac{d}{dt} (\log(1-\mu(t))) dt  = \infty}
from which \eqref{integral} follows. 

As $|h(s,t)|\leq C$, \eqref{curvatureintegral} then follows from \eqref{rearrangedcurvature} and \eqref{integral} and the Proposition is proved.
\end{proof}
\begin{ex}[Shrinking circle]\label{shrinkingcircle}
Define $\phi \colon S^1 \times (-\frac{\pi}{2},\frac{\pi}{2}) \to \EE^{1+2}$ by $\phi(s,t) = (t, \gamma(s,t))$, where 
\al{\gamma(s,t)= (\cos t\cos s, \cos t\sin s).} Then one may compute $h(s,t)=0$, and $\phi$ is a timelike maximal immersion. In addition, $\la\gamma_s,\gamma_t\ra =0$ (the parameterisation is orthogonal) and $|\gamma_t(s,t)|^2\nearrow 1$ as $t\nearrow \frac{\pi}{2}$. Observe $|k(s,t)|=|\cos t|^{-1}$, and $\int_0^{\frac{\pi}{2}} |k(s,t)| dt = \infty$ for all $s$, which is consistent with \eqref{curvatureintegral}. For this example, we may study the rate of blow-up in more detail. The element of arclength along $\gamma(\cdot,t)$ is $d\sigma(s) = |\cos t| ds $, thus for $p,q\in (1,\infty)$, one has 
\al{\|k\|_{L^q((0,\frac{\pi}{2});L^p(S^1))}&= \left(\int_0^{\frac{\pi}{2}} \left( \int_0^{2\pi} |k(s,t)|^p d\sigma(s) \right)^{\frac{q}{p}} dt \right)^{\frac{1}{q}} \\ 
&=(2\pi)^\frac{1}{p}\left(\int_0^\frac{\pi}{2} |\cos t|^\frac{q(1-p)}{p} dt\right)^\frac{1}{q}} 
and
\al{\|k\|_{L^q((0,\frac{\pi}{2});L^p(S^1))}<\infty \quad \text{if and only if} \quad \frac{1}{p} + \frac{1}{q} > 1.} 
\end{ex}
\sloppy The shrinking circle is $C^1$ inextendible beyond the singular time $\frac{\pi}{2}$ (in fact, the maximal extension of $\phi\left((S^1 \times (-\frac{\pi}{2},\frac{\pi}{2})\right)$ to a $C^0$ submanifold of $\RR^{1+2}$ is given by taking the closure of $\phi\left(S^1 \times (-\frac{\pi}{2},\frac{\pi}{2})\right)$ i.e.\ by attaching one point at $x^0=\frac{\pi}{2}$ and one at $x^0=-\frac{\pi}{2}$). In Subsection \ref{c1extendibleexamples}, we will see examples where the evolution is $C^2$ inextendible, but $C^1$ extendible.
\begin{proof}[Proof of Theorem \ref{noextensionlemma}]:
Let $\phi \colon (s_0-\eps, s_0+\eps) \times (t_0-\eps, t_0] \to \EE^{1+2}$, $\phi(s,t)=(t, \gamma^1(s,t), \gamma^2(s,t))$, be a $C^1$ immersion which is a $C^2$ timelike immersion with bounded mean curvature on $(s_0-\eps,s_0+ \eps) \times (t_0-\eps, t_0)$, and which is null at the point $(s_0,t_0)$. For a sufficiently small $\eps_0\in(0,\eps)$, let $r \colon [t_0-\eps_0, t_0] \to \RR$ be a solution to the terminal value problem 
\al{\dot{r}(t)= -\frac{\la \gamma_s(r(t), t) , \gamma_t(r(t),t) \ra}{|\gamma_s(r(t), t)|^2};\quad r(t_0)= s_0,}
which satisfies $|r(t)-s_0|<\frac{\eps}{2}$ for all $t\in [t_0-\eps_0, t_0]$ (such a solution exists by the Peano existence theorem). We have $r\in C^2([t_0-\eps_0,t_0)) \cap C^1([t_0-\eps_0,t_0])$.

Define $\Omega = (-\eps_0,\eps_0) \times (t_0-\eps_0,t_0)$. Let $\phi' \colon \widebar{\Omega} \to \EE^{1+2}$, $\phi'(s',t')=(t',\gamma'^1(s',t'),\gamma'^2(s',t'))$ where $\gamma'=(\gamma'^1, \gamma'^2)$ is given by $\gamma'(s',t') = \gamma(r(t')+s',t')$. Then $\phi'$ is a $C^1$ immersion which is a $C^2$ timelike immersion with bounded mean curvature on $\Omega$, and $\phi'(0,t_0)=\phi(s_0,t_0)$. By the chain rule,
\al{\la \gamma'_{s'}(s',t'), \gamma'_{t'}(s',t') \ra = \dot{r}(t') &|\gamma_s(r(t') + s', t')|^2  \\ 
&+ \la \gamma_s(r(t') +s',t'), \gamma_t(r(t')+s',t') \ra }
so by construction we have
\al{\la \gamma'_{s'}(0,t') , \gamma'_{t'}(0,t') \ra =0}
for $t'\in (t_0-\eps_0,t_0)$. As $\phi'$ is null at $(0,t_0)$, it may be seen that $|\gamma'_{t'}(0,t_0)|^2=1$. So since $|h(s',t')|\leq C$ for $(s',t')\in \Omega$, we see $\phi'$ satisfies the conditions for Proposition \ref{regularityloss}, so
\eq{\label{infinitecurvature}\limsup_{t'\nearrow t_0} |k(0,t')| = \infty}
where $k(\cdot,t')$ is the curvature of the planar cross sections $\gamma'(\cdot, t')$. Thus the curvatures of the curves $\gamma(\cdot,t)$ are not uniformly bounded for $t\in [t_0-\eps,t_0]$, so $\phi$ is not $C^2$.
\end{proof}

\section{Evolution beyond singular time by isothermal gauge}\label{singularevolution}
As is well documented in the physics literature, see e.g.\ \cite[Chap.\ 7]{string}, one global notion of Cauchy evolution, which defines a timelike maximal surface away from some possible singular set, may be given for arbitrary initial data by solving the maximal surface equations in isothermal gauge. In fact, we have already encountered this construction in Examples \ref{graphexample}--\ref{nongraphexample} and \ref{shrinkingcircle}. 

In Subsection \ref{isothermalgaugesection} we will recall how to evolve by isothermal gauge. In Subsection \ref{singularsetanalysis} we will prove some results on bounds for the singular set, including a criterion (in terms of only the initial curve) for determining whether the singular set is non-empty in some localized patch, as well as a result of short-time existence. In Subsection \ref{c1extendibleexamples}, we will present some examples whereby the evolution by isothermal gauge yields $C^1$ embedded surfaces which are non-graphical (these examples are interesting in light of Theorem \ref{maintheorem}). Finally, in Subsection \ref{gaugesection}, we will address the question of for which initial data sets the isothermal gauge yields a $C^1$ immersed surface, and prove Theorem \ref{gaugetheorem} which demonstrates an obstruction to constructing $C^1$ immersed surfaces by isothermal gauge which are not embedded.

\subsection{Evolution by isothermal gauge}\label{isothermalgaugesection}

Let $\CC\colon \RR \to \EE^{1+2}$, be a $C^k$, $k\geq 1$, proper immersion of the form
\eq{\label{cform}\CC(s)=(0,c(s))}
and let $V$ be a $C^{k-1}$, future-directed, timelike vector field along $\CC$. We refer to the pair $(\CC,V)$ as the initial data. 

We will construct a surface $\Sigma\subseteq\EE^{1+2}$ containing $\mathrm{Im}(\CC)$, with $V$ tangent to $\Sigma$ along $\mathrm{Im}(\CC)$, which is a $C^k$ immersed timelike maximal surface away from some (possibly empty) singular set. 

The prescription of the initial data $(\CC,V)$ is equivalent to a prescription of a curve $\CC$ and a continuous distribution of timelike tangent planes along $\CC$. By changing basis as necessary, we may thus assume $V$ is of the form 
\eq{\label{vequation}V(s) = (1, v(s))} 
where 
\eq{\label{initialdataorthog1}\la \dot{c}(s),v(s) \ra = 0} 
($c=(c^1,c^2)$, $v=(v^1,v^2)$). Since $V$ is timelike implies $|v(s)|<1$, we may then reparametrize the curve $\CC(s)$ to ensure the additional constraint 
\eq{\label{initialdataorthog2}|\dot{c}(s)|^2 + |v(s)|^2 =1}
holds. The pair $(\dot{\CC}(s),V(s))$ gives an orthonormal frame along the initial data, and the timelike planes $T_{\CC(s)}\Sigma =\mathrm{span}\{\dot{\CC}(s), V(s)\}$ are spanned by the null vectors 
\eq{\label{aplusminus'} A_\pm(s)=V(s)\pm \dot{\CC}(s)=\left(1,v(s)\pm\dot{c}(s)\right)=(1,a_\pm(s)).} 

Next, define a $C^k$ map $\phi \colon \RR^2 \to \EE^{1+2}$ by $\phi(s,t)=(t,\gamma^1(s,t),\gamma^2(s,t))$ where $\gamma=(\gamma^1,\gamma^2)$ is given by d'Alembert's formula
\eq{\label{dalembert2} \gamma(s,t) = \frac{1}{2} \left( c(s+t) + c(s-t) + \int_{s-t}^{s+t} v(\zeta) d\zeta \right) .}

\eqref{dalembert2} implies that
\eq{\label{wave}\gamma_{tt}-\gamma_{ss}&=0 \\
\label{initialconditions} \gamma(s,0)=c(s); \quad& \gamma_t(s,0)=v(s)}
with \eqref{wave} understood in the weak sense when $\gamma$ is not $C^2$. The isothermal gauge conditions 
\eq{\label{og1}\la \gamma_s(s,t), \gamma_t(s,t) \ra &=0 \\
\label{og2}|\gamma_s(s,t)|^2+|\gamma_t(s,t)|^2 &=1}
are satisfied for all $(s,t)\in \RR^2$ by \eqref{dalembert2}. We will call $\phi\colon \RR^2 \to \EE^{1+2}$ the evolution of $(\CC,V)$ by isothermal gauge. 

Write
\al{\Sigma&=\phi(\RR^2)}
and define the closed (possibly empty) singular set by
\eq{\label{ksing}\Ksing = \{ (s,t) \in \RR^2 \colon \gamma_s(s,t) =0\} }
so that $\phi$ gives a $C^k$ immersion on $\RR^2 \setminus \Ksing$. Then from \eqref{wave}, \eqref{og1}, \eqref{og2} we see that on $\RR^2 \setminus \Ksing$, $\phi$ defines a timelike, maximal immersion. Write
\eq{\label{ssing}\Ssing&= \phi(\Ksing).}
By construction $\Sigma \setminus \Ssing$ gives a $C^k$ timelike maximal immersed surface containing $\CC$ and tangent to the velocity field $V$ along $\CC$.

The following simple topological result shows that this is indeed a global evolution.
\begin{lem}\label{propernesslemma}
Let $\phi\colon \RR^2 \to \EE^{1+2}$, $\phi(s,t)=(t,\gamma(s,t))$ be an evolution by isothermal gauge for a $C^1\times C^0$ initial data $(\CC,V)$, where $\CC=\gamma(\cdot,0)$ is a proper immersion, so that $|\gamma(s,0)| \to \infty$ as $s\to \pm \infty$. Then $|\gamma(s,t)| \to \infty$ as $s\to \infty$ for all $t$, so that each map $\gamma(\cdot,t)$ is proper, and thus $\phi$ is proper.
\end{lem}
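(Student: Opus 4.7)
The plan is to read off the desired properness directly from d'Alembert's formula \eqref{dalembert2}, using the isothermal gauge conditions to rule out large cancellation. The crucial input is the gauge constraint \eqref{initialdataorthog2}, which forces $|\dot c(s)| \le 1$ and $|v(s)| \le 1$ pointwise; in particular the initial curve $c$ is globally $1$-Lipschitz.

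First I would observe that Lipschitzness of $c$ gives $|c(s \pm t) - c(s)| \le |t|$, whence by the triangle inequality applied to $c(s+t) + c(s-t) = 2c(s) + (c(s+t)-c(s)) + (c(s-t)-c(s))$,
\al{|c(s+t) + c(s-t)| \;\ge\; 2|c(s)| - 2|t|.}
Meanwhile the bound $|v|\le 1$ yields
\al{\left|\int_{s-t}^{s+t} v(\zeta)\,d\zeta\right| \;\le\; 2|t|.}
Substituting both estimates into \eqref{dalembert2} produces the key inequality, uniform in $t$,
\al{|\gamma(s,t)| \;\ge\; |c(s)| - 2|t|.}

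From this the first claim is immediate: for each fixed $t$, the right-hand side tends to infinity as $s \to \pm\infty$ by the hypothesis that $\CC$ is proper, so $\gamma(\cdot,t)$ is proper. For properness of $\phi$, let $K \subseteq \EE^{1+2}$ be compact. Then $\phi^{-1}(K)$ is closed by continuity; its projection to the $t$-axis is bounded since $x^0 \circ \phi = t$; and the displayed estimate then forces $|c(s)|$ to be bounded on $\phi^{-1}(K)$, which in turn bounds $s$ since $|c(s)| \to \infty$ at $\pm\infty$. Hence $\phi^{-1}(K)$ is compact.

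There is no substantial obstacle here. The content of the lemma is simply that the isothermal gauge conditions \eqref{og1}--\eqref{og2} are rigid enough that d'Alembert's formula cannot spread $\gamma(\cdot,t)$ faster than linearly in $|t|$ away from $c$, so properness of the initial datum propagates automatically.
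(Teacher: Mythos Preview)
Your proof is correct and follows essentially the same idea as the paper: derive a bound $|\gamma(s,t)| \ge |c(s)| - C|t|$ from the unit-speed constraints of isothermal gauge. The only difference is tactical---the paper integrates $\gamma_t$ in $t$ using $|\gamma_t|\le 1$ from \eqref{og2} to get the slightly sharper $|\gamma(s,t)|\ge|\gamma(s,0)|-|t|$, whereas you go via d'Alembert's formula and the initial constraints $|\dot c|,|v|\le 1$ to reach $|\gamma(s,t)|\ge|c(s)|-2|t|$; either bound suffices.
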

\begin{proof}
For each $t\in\RR$, since $|\gamma_t| \leq 1$, $|\gamma(s,t)|\geq |\gamma(s,0)| - \int_0^t |\gamma_t(s,\tilde{t})| d\tilde{t} \geq  |\gamma(s,0)| - t \to \infty$ as $s\to \pm \infty$.
\end{proof}
Recalling that $a_\pm(s) = v(s) \pm \dot{c}(s) = \gamma_t(s,0) \pm \gamma_s(s,0)$
give the spatial parts of the null vectors $A_\pm(s)=(1,a_\pm(s))$ along the initial tangent planes, with $|a_\pm(s)|^2=1$, from \eqref{dalembert2} we see
\eqq{\gamma_s(s,t) &= \frac{1}{2}\left( \dot{c}(s+t) + \dot{c}(s-t) + v(s+t) - v(s-t) \right) \\
&= \frac{1}{2} \left(a_+(s+t)-a_-(s-t)\right)}
so we have the following characterisation of $\Ksing$
\eq{\label{ksingcharacterisation}\Ksing = \left\{ (s,t) \in \RR^2 \colon a_+(s+t)=a_-(s-t)\right\}.} 
We will now show that $\Ssing$ is singular, at least in the sense that it consists of null points. The following result was observed, as part of a broader context, in \cite[Theorem 3.1]{jerrardetal}.
\begin{lem}\label{nulllemma}
Let $\phi\colon \RR^2 \to \EE^{1+2}$ be an evolution by isothermal gauge for a $C^1 \times C^0$ initial data $(\CC,V)$, and suppose $\Ksing$ as defined in \eqref{ksing} is non-empty. Suppose for some neighbourhood $U$ of a point $q\in \dd \Ksing$ that $\phi(U)$ is a $C^1$ embedded surface. Then $\phi(U)$ is null at $\phi(q)$. 
\end{lem}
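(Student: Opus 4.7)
The starting observation is that at $q=(s_0,t_0)\in\partial\Ksing$ one has $\gamma_s(q)=0$, and the isothermal constraint \eqref{og2} then forces $|\gamma_t(q)|=1$, so that $\ell:=\phi_t(q)=(1,\gamma_t(q))$ is a null vector in $\RR^{1+2}$. Since $\phi\colon U\to\phi(U)$ is $C^1$ and $\phi(U)$ is a $C^1$ embedded $2$-submanifold, the image of $d\phi_q$ lies in $T_{\phi(q)}\phi(U)$; and since $\phi_s(q)=0$, this image is exactly the null line $\spann\{\ell\}$. So I already have $\ell\in T_{\phi(q)}\phi(U)$, and the task reduces to producing a second, linearly independent vector in $T_{\phi(q)}\phi(U)$ that is Minkowski-orthogonal to $\ell$, which will force the induced bilinear form on the tangent plane to be degenerate.

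To produce this second tangent direction I would approach $q$ through the non-singular set: since $q\in\partial\Ksing$, choose a sequence $q_n\to q$ with $q_n\in U\setminus\Ksing$, and set
\al{w_n:=\left(0,\tfrac{\gamma_s(q_n)}{|\gamma_s(q_n)|}\right)\in T_{\phi(q_n)}\phi(U),}
which are Euclidean unit vectors in $\RR^{1+2}$. Passing to a subsequence, $w_n\to w=(0,u_\infty)$ with $|u_\infty|=1$. The $C^1$ embedded hypothesis is crucial here: it ensures that the tangent planes of $\phi(U)$ vary continuously in the Grassmannian, so $w\in T_{\phi(q)}\phi(U)$. Since the $x^0$-components of $\ell$ and $w$ are $1$ and $0$ respectively, $\ell$ and $w$ are linearly independent and must span $T_{\phi(q)}\phi(U)$.

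The rest is a direct calculation in the isothermal gauge. From \eqref{og1}, $\la\gamma_s(q_n),\gamma_t(q_n)\ra=0$, which passes to $\la u_\infty,\gamma_t(q)\ra=0$ in the limit and therefore $\eta(\ell,w)=\la\gamma_t(q),u_\infty\ra=0$. Combined with $\eta(\ell,\ell)=-1+|\gamma_t(q)|^2=0$ and $\eta(w,w)=|u_\infty|^2=1$, the Gram matrix of $\eta|_{T_{\phi(q)}\phi(U)}$ has determinant $0$, so $T_{\phi(q)}\phi(U)$ is a null plane, i.e.\ $\phi(U)$ is null at $\phi(q)$. The only genuinely subtle point is the middle step, where I recover a non-trivial tangent direction from the rank-degenerate differential $d\phi_q$; this relies crucially both on $q\in\partial\Ksing$ (so that non-singular points accumulate at $q$ and the normalization of $\phi_s$ makes sense along the approaching sequence) and on $\phi(U)$ being a $C^1$ embedded surface (so that tangent planes are continuous, allowing the normalized limit to land in $T_{\phi(q)}\phi(U)$).
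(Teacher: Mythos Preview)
Your proof is correct. Both you and the paper use the same core mechanism: approach $q$ through points of $U\setminus\Ksing$, and exploit the $C^1$ embedded hypothesis to pass tangent-plane information to the limit. The difference is in the choice of frame. You work with the orthogonal pair $(\phi_t,\phi_s/|\phi_s|)$: the first is already a null vector in $T_{\phi(q)}\phi(U)$, and you manufacture the second by normalising $\phi_s$ along the approximating sequence and taking a subsequential limit; the isothermal constraints then force the Gram matrix to be degenerate. The paper instead uses the null frame $A_\pm=(1,a_\pm)$, noting that at regular points the tangent plane is timelike with its two null lines spanned by $A_+(s+t)$ and $A_-(s-t)$; since $q\in\Ksing$ is characterised by $a_+(s_*+t_*)=a_-(s_*-t_*)$, these two null directions coalesce in the limit, and a plane whose two null directions coincide is a null plane. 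The paper's version avoids the subsequence extraction and Gram-matrix computation, and ties the argument directly to the characterisation \eqref{ksingcharacterisation} of $\Ksing$; your version has the merit of making the role of the constraints \eqref{og1}--\eqref{og2} completely explicit.
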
 
\begin{proof} Let $U$ be a neighbourhood of $q\in \dd \Ksing$ such that $\phi(U)$ is a $C^1$ embedded surface.
For each point $(s,t)\in U \setminus \Ksing$, the tangent space $T_{\phi(s,t)}\phi(U)$ is a timelike plane which intersects the light cone along null directions spanned by the nowhere vanishing null vectors 
$$\phi_s(s,t)+\phi_t(s,t)= \phi_s(s+t,0)+\phi_t(s+t,0)= A_+(s+t)=(1,a_+(s+t))$$ and 
$$\phi_s(s,t)-\phi_t(s,t)=\phi_s(s-t,0)-\phi_t(s-t,0) = A_-(s-t)=(1,a_-(s-t)).$$ 
Choose a sequence of points $(s_k,t_k)\in U \setminus \Ksing$ with $(s_k,t_k)\to q=(s_*,t_*)$. Since $a_+(s_*+t_*)=a_-(s_*-t_*)$, it follows that $\lim_{(s_k,t_k)\to(s_*,t_*)}a_+(s_k+t_k)=\lim_{(s_k,t_k)\to(s_*,t_*)}a_-(s_k-t_k)$, so the null lines along which $T_{\phi(s_k,t_k)}\phi(U)$ intersects the light cone converge. So $T_{\phi(s_*,t_*)}\phi(U)=\lim_{(s_k,t_k)\to(s_*,t_*)} T_{\phi(s_k,t_k)} \phi(U)$ must be a null plane. 
\end{proof}

\subsection{Some analysis of the singular set}\label{singularsetanalysis}
Let $\CC \colon \RR \to \RR^{1+2}$, be a $C^k$ immersion, $k\geq 1$, of the form $\CC(s)=(0,c(s))$, and let $V(s)=(1,v(s))$ be a $C^{k-1}$ timelike vector field along $\CC$ where $c,v$ satisfy \eqref{initialdataorthog1}, \eqref{initialdataorthog2}. Write
\eq{\label{initialunittangent}  U_0(s) = \frac{\dot{c}(s)}{|\dot{c}(s)|} }
for the unit tangent map along $\CC$. Let $\vartheta\colon \RR \to \RR$ be a lift of $U_0 \colon \RR \to S^1$, so that
\eq{\label{theta}U_0(s) = (\cos \vartheta(s), \sin \vartheta(s)) .}
If $\CC$ is $C^2$, then $\vartheta$ may be related to the curvature $k$ of $\CC$ by the formula
\eq{\label{curvatureidentity} \int_{s_1}^{s_2}k(s) d\sigma(s) = \vartheta(s_2)-\vartheta(s_1)}
where $d\sigma(s)=|\dot{c}(s)|ds$ is the element of arclength.

By \eqref{initialdataorthog1}, we may define a function $\mu\colon \RR \to (-1,1)$ such that 
\eq{\label{muidentity}v(s)= \mu(s)U_0(s)^\perp = \mu(s) (-\sin \vartheta(s), \cos \vartheta(s)).}
Next recall from \eqref{aplusminus'} that $a_\pm(s)= v(s) \pm \dot{c}(s)$. By trigonometric identities, it may be seen that the quantities
\eq{ \label{alphaplus} \alpha_+(s)&=\vartheta(s)+\arcsin(\mu(s)) \\
\label{alphaminus} \alpha_-(s) &=\vartheta(s)-\arcsin(\mu(s)) - \pi}
define a pair of lifts for $a_\pm$, so that
\al{a_\pm(s)=(\cos \alpha_\pm(s), \sin \alpha_\pm(s)).}
See Figure \ref{isothermalfigure}.
\begin{figure}[h]
\centering
\includegraphics[width=.3\textwidth]{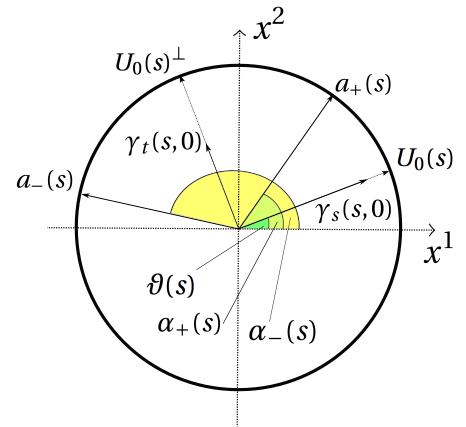}
\caption{The isothermal frame in angular coordinates.}\label{isothermalfigure}
\end{figure}

\begin{rem}
The function $\mu$ defined by \eqref{muidentity} may be given a geometric interpretation as follows. Defining $\varphi(s) = \arctanh \mu(s)$, we see that
\al{N(s) = (\sinh \varphi(s), - \cosh \varphi(s) \sin \vartheta(s), \cosh \varphi(s) \cos \vartheta(s))}
defines a spacelike unit normal to $T_{\CC(s)}\Sigma = \mathrm{span}\{ \dot{\CC}(s),V(s) \}$. So $(\vartheta, \varphi)$ are longitude-latitude coordinates on the 1-sheeted hyperboloid $S^{1+1}=\{ (x^0,x^1,x^2)\in \RR^{1+2} \colon -(x^0)^2+(x^1)^2+(x^2)^2 =1\}$. 
\end{rem}

Denote the characteristic diamond associated to the interval $[s_1,s_2]$ by
\eq{\label{characteristicdiamond} D(s_1,s_2)=\left\{ (s,t) \in \RR^2 \colon s_1+|t| \leq s \leq s_2-|t| \right\}.}

\begin{prop}\label{keyprop} Let $\phi\colon \RR^2 \to \EE^{1+2}$ be an evolution by isothermal gauge for a $C^1\times C^0$ initial data $(\CC,V)$. Writing $U_0$ for the unit tangent along $\CC$ as in \eqref{initialunittangent}, suppose that $\mathrm{Im}(U_0)$ contains a closed semi-circle, i.e.\ suppose there exist $s_1<s_2$ such that
\eq{\label{closedsemicircle} |\vartheta(s_2)-\vartheta(s_1)| \geq \pi,}
where $\vartheta$ is as in \eqref{theta}. Then, with $\Ksing$ as in \eqref{ksing} and $D(s_1,s_2)$ as in \eqref{characteristicdiamond} , it follows
\eq{\label{nonemptycharacteristictriangle} \Ksing \cap D(s_1,s_2)\neq \emptyset.}
\end{prop}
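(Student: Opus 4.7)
The plan is to argue by contradiction, using the characterization of the singular set already established and the geometric Lemma \ref{geometrylemma}, to reduce the statement to an elementary assertion about the winding of the unit tangent.

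First I would note that by \eqref{ksingcharacterisation} the singular set is $\Ksing = \{(s,t) : a_+(s+t) = a_-(s-t)\}$. Under the change of variables $(\xi,\eta) = (s+t,\, s-t)$, the characteristic diamond $D(s_1,s_2)$ maps exactly onto the square $[s_1,s_2]^2$ (the condition $s_1+|t|\le s\le s_2-|t|$ is straightforwardly equivalent, via $|t| = |\xi-\eta|/2$, to $\min(\xi,\eta)\ge s_1$ and $\max(\xi,\eta)\le s_2$). Hence it suffices to exhibit $\xi,\eta\in[s_1,s_2]$ with $a_+(\xi) = a_-(\eta)$.

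Next I would suppose the opposite, i.e.\ that $a_+(\xi)\neq a_-(\eta)$ for every $\xi,\eta\in[s_1,s_2]$. Applying Lemma \ref{geometrylemma} (which applies verbatim with $[s_1,s_2]$ in place of $[-M,M]$) yields a unit vector $\omega\in\RR^2$ such that
\eq{\label{planomega} \la a_+(\xi)-a_-(\eta),\, \omega\ra > 0 \quad\text{for all } \xi,\eta\in[s_1,s_2].}
The key step is then to specialize \eqref{planomega} to $\xi=\eta$: since $a_+(\xi)-a_-(\xi) = 2\dot{c}(\xi)$ by \eqref{aplusminus'}, this gives $\la \dot{c}(\xi),\omega\ra>0$ for every $\xi\in[s_1,s_2]$, i.e.\ $\la U_0(\xi),\omega\ra>0$ throughout $[s_1,s_2]$.

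Finally I would write $\omega = (\cos\theta_0,\sin\theta_0)$ so that the previous inequality reads $\cos(\vartheta(\xi)-\theta_0)>0$ for $\xi\in[s_1,s_2]$. Because $\vartheta$ is continuous, $\vartheta-\theta_0$ must remain in a single connected component of $\{\cos>0\} = \bigcup_{n\in\ZZ}(2\pi n-\tfrac{\pi}{2},\,2\pi n+\tfrac{\pi}{2})$, forcing $|\vartheta(s_2)-\vartheta(s_1)|<\pi$ and contradicting the hypothesis \eqref{closedsemicircle}. The main (only real) obstacle is not technical but conceptual: one must recognise that it is the diagonal $\xi=\eta$ in \eqref{planomega} which turns the ``disjoint image'' condition on the null frame $(a_+,a_-)$ into a half-space constraint on the unit tangent $U_0$, at which point the semi-circle hypothesis on $\mathrm{Im}(U_0)$ immediately closes the argument.
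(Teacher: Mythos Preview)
Your proof is correct, but it takes a genuinely different route from the paper's. The paper argues directly with the angular lifts: from \eqref{alphaplus}--\eqref{alphaminus} one has
\[
\bigl|(\alpha_+(s_2)-\alpha_+(s_1)) + (\alpha_-(s_2)-\alpha_-(s_1))\bigr| = 2\,|\vartheta(s_2)-\vartheta(s_1)| \geq 2\pi,
\]
so the two closed arcs $a_+([s_1,s_2])$ and $a_-([s_1,s_2])$ have total length at least $2\pi$ and hence cannot be disjoint in $S^1$; the characterisation \eqref{ksingcharacterisation} then gives a point of $\Ksing$ in $D(s_1,s_2)$. By contrast, you argue by contradiction via Lemma~\ref{geometrylemma}, obtaining a separating direction $\omega$ and then specialising to the diagonal $\xi=\eta$ to convert the null-frame separation into the half-space constraint $\la U_0,\omega\ra>0$, which forces $|\vartheta(s_2)-\vartheta(s_1)|<\pi$. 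Your approach has the virtue of making the link with Theorem~\ref{maintheorem} completely transparent (it is literally the same separation lemma, and the diagonal identity $a_+-a_-=2\dot c$ explains why the embeddedness criterion and the semi-circle criterion coincide), whereas the paper's argument is shorter, avoids proof by contradiction, and exposes the more quantitative fact that the \emph{sum} of the angular sweeps of $a_+$ and $a_-$ equals twice that of $U_0$.
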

\begin{rem}
The same conclusion cannot be reached if $\mathrm{Im}(U_0)$ contains only a half-closed semi-circle. Indeed, in Example \ref{nongraphexample}, we had $\mathrm{Im}(\vartheta)=[-\frac{\pi}{2},\frac{\pi}{2})$, whilst $\Ksing = \emptyset$.
\end{rem}
\begin{proof}[Proof:]
Identities \eqref{alphaplus} and \eqref{alphaminus} give 
\eq{\label{volumeidentity} \left|\left(\alpha_+(s_2)-\alpha_+(s_1) \right) + \left( \alpha_-(s_2)-\alpha_-(s_1)\right) \right| = 2  \left| \vartheta(s_2) - \vartheta(s_1) \right| \geq 2\pi.}
It follows that $a_+([s_1,s_2])$ and $a_-([s_1,s_2])$ cannot form disjoint subsets of $S^1$. So there exist $\xi,\eta \in [s_1,s_2]$ such that $a_+(\xi)=a_-(\eta)$. Since $\Ksing$ is characterized by \eqref{ksingcharacterisation}, the proposition follows.
\end{proof}
We have the following consequence of Propositions \ref{keyprop} and \ref{regularityloss} (compare Theorem \ref{maintheorem}).
\begin{cor}\label{noC2cor} Let $\phi\colon \RR^2 \to \EE^{1+2}$ be an evolution by isothermal gauge for a $C^2\times C^1$ initial data $(\CC,V)$, and let $U_0$ be the unit tangent along $\CC$ as in \eqref{initialunittangent}. Suppose that $\mathrm{Im}(U_0)$ contains a closed semi-circle. Then $\Sigma=\Imm(\phi)$ is not a $C^2$ immersed surface.
\end{cor}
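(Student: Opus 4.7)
The plan is to combine Proposition \ref{keyprop} with Proposition \ref{regularityloss} to produce a curvature blow-up at the first singular point of $\phi$, and then to argue via Lemma \ref{nulllemma} that this blow-up is incompatible with $\Sigma$ being $C^2$ immersed.

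First I would invoke Proposition \ref{keyprop}: the hypothesis that $\mathrm{Im}(U_0)$ contains a closed semi-circle yields $s_1 < s_2$ with $|\vartheta(s_2) - \vartheta(s_1)| \geq \pi$, hence $\Ksing \cap D(s_1, s_2) \neq \emptyset$. This set is compact and, because $\gamma_s(s, 0) = \dot{c}(s) \neq 0$, it is disjoint from $\{t = 0\}$. Therefore one can select $(s_0, t_0) \in \Ksing$ minimizing $|t|$ on it, with $t_0 > 0$ after time-reflection if needed. After a harmless enlargement of $[s_1, s_2]$ to ensure $(s_0, t_0)$ lies in the interior of the characteristic diamond, I may choose $\delta > 0$ so that $\Omega := (s_0 - \delta, s_0 + \delta) \times (t_0 - \delta, t_0)$ is disjoint from $\Ksing$.

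Next I would apply Proposition \ref{regularityloss} to $\phi$ on $\widebar{\Omega}$. The hypotheses are readily verified: on $\Omega$, $\phi$ is a $C^2$ timelike maximal immersion (so its mean curvature vanishes identically, hence is bounded); the isothermal gauge conditions \eqref{og1}, \eqref{og2} yield $\la \gamma_s(s_0, t), \gamma_t(s_0, t) \ra = 0$ for $t \in [t_0 - \delta, t_0)$; $\phi$ extends to a $C^1$ map on $\widebar{\Omega}$ via d'Alembert's formula \eqref{dalembert2} since $(c, v) \in C^2 \times C^1$; and $\gamma_s(s_0, t_0) = 0$ combined with $|\gamma_s|^2 + |\gamma_t|^2 = 1$ gives $|\gamma_t(s_0, t_0)|^2 = 1$. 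Proposition \ref{regularityloss} therefore yields $\int_{t_0-\delta}^{t_0} |k(s_0, t)|\, dt = \infty$, where $k(\cdot, t)$ denotes the curvature of the planar curve $\gamma(\cdot, t)$; in particular $\limsup_{t \nearrow t_0} |k(s_0, t)| = \infty$.

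For the contradiction, suppose $\Sigma$ is $C^2$ immersed near $q := \phi(s_0, t_0)$. By Lemma \ref{nulllemma}, $T_q \Sigma$ is a null plane, and a short computation of the first fundamental form shows that no null plane in $\EE^{1+2}$ contains the timelike vector $\partial_{x^0}$ (any plane through $\partial_{x^0}$ has negative metric determinant and is therefore timelike rather than null). Hence $d(x^0)|_{T_q\Sigma}$ is non-zero, and the implicit function theorem provides a local $C^2$ parametrization of $\Sigma$ near $q$ of the form $(s', t') \mapsto (t', \delta(s', t'))$ with $\delta_{s'} \neq 0$. The cross sections $\Sigma \cap \{x^0 = t\}$ near $q$ are therefore $C^2$ regular curves whose geometric curvatures are uniformly bounded on a neighborhood of $q$. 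Since for $t < t_0$ close to $t_0$ the point $\gamma(s_0, t)$ lies in this neighborhood, and $\gamma(\cdot, t)$ is a regular $C^2$ local parametrization of the corresponding cross section at parameter $s = s_0$, the parametric curvature $k(s_0, t)$ agrees with the geometric curvature of the cross section at $\gamma(s_0, t)$ and must therefore remain bounded as $t \nearrow t_0$, contradicting the blow-up above.

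The main obstacle I anticipate is this last step: one must translate the set-theoretic $C^2$ immersedness hypothesis on $\Sigma$ at the null point $q$ into uniform boundedness of the parametric cross-sectional curvatures $k(s_0, t)$ computed in isothermal gauge. The essential input is the transversality $\partial_{x^0} \notin T_q\Sigma$ for null planes in $\EE^{1+2}$, which lets one locally rewrite $\Sigma$ as a $C^2$ spatial graph even at the null point and thereby identify parametric with geometric curvature on the cross sections.
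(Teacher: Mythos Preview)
Your proof is correct and follows essentially the same approach as the paper's: locate a first singular point via Proposition~\ref{keyprop}, apply Proposition~\ref{regularityloss} there to obtain curvature blow-up of the spatial cross-sections, and contradict this via the implicit function theorem under the assumption that $\Sigma$ is $C^2$ immersed. One minor slip: the fact that $\partial_{x^0}\notin T_q\Sigma$ does not by itself yield $d(x^0)|_{T_q\Sigma}\neq 0$ (the spacelike plane $\{x^0=0\}$ is a counterexample); the correct observation is that a null plane contains a null vector, whose $x^0$-component is necessarily nonzero, so $x^0|_\Sigma$ has no critical point at $q$---this is precisely the paper's remark that a causal surface admits no critical points of $x^0$.
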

\begin{proof}
By Proposition \ref{keyprop}, 
\al{\Ksing \cap D(s_1,s_2) \neq \emptyset.} 
By continuity, for some $\eps>0$ we have
\al{\Ksing \cap D(s_1, s_2) \cap \{(s,t) \colon |t| <\eps \} =\emptyset,} 
so let $T\in\left(0,\frac{s_2-s_1}{2}\right]$ be the largest time such that $\Ksing \cap D(s_1, s_2) \cap \{(s,t) \colon |t| <T \} =\emptyset$ holds.
 
Supposing $(s_0,t_0)\in \Ksing \cap D(s_1, s_2) \cap \{(s,t) \colon t = T \}$, then the conditions for Proposition \ref{regularityloss} are satisfied on $\Omega= D(s_1, s_2) \cap \{(s,t) \colon |t| <T \}$, so the curvature $k(\cdot,t)$ of the cross sections $\gamma(\cdot,t)$ satisfies $\limsup_{t\nearrow t_0} k(s_0,t)=\infty$. For the case $(s_0,t_0)\in \Ksing \cap D(s_1, s_2) \cap \{(s,t) \colon t = -T \}$, a symmetric argument shows $\limsup_{t\searrow t_0} k(s_0,t)=\infty$.

If $\Sigma$ were $C^2$ immersed, then $\Sigma$ would be a causal surface, so $x^0|_\Sigma$ would have no critical points, and by the implicit function theorem the cross sections $\gamma(\cdot,t)$ would have locally uniformly bounded curvatures. This would amount to a contradiction, thus $\Sigma$ is not $C^2$. 
\end{proof}
In particular, Proposition \ref{keyprop} and Corollary \ref{noC2cor} apply to the case of a self-intersecting curve $\CC$, thanks to the following elementary result.
\begin{lem}\label{selfintersectionlemma}
Suppose $c \colon \RR \to \RR^2$ is a $C^1$ immersion with a point of self-intersection, i.e.\ there exist $r_1<r_2$ such that $c(r_1)=c(r_2)$. Let $U_0$ denote the unit tangent along $c$ as in \eqref{initialunittangent}. Then $U_0([r_1,r_2])$ contains an arc of length $>\pi$, i.e.\ there exist $s_1,s_2\in [r_1,r_2]$ such that 
\eq{\label{firstsharpinequality} \vartheta(s_2)-\vartheta(s_1) > \pi}
where $\vartheta$ denotes the angle swept out between $U_0$ and the $x^1$ axis as in \eqref{theta}.
\end{lem}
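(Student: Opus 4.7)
My plan is to argue by contradiction: assume that $\vartheta(s_2) - \vartheta(s_1) \leq \pi$ for every pair $s_1, s_2 \in [r_1, r_2]$, and derive that $c$ must be a straight line segment on $[r_1, r_2]$, which contradicts the self-intersection $c(r_1) = c(r_2)$ combined with the immersion hypothesis.

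First, under the contradiction hypothesis, the oscillation of the continuous function $\vartheta$ on $[r_1, r_2]$ is at most $\pi$, so $\vartheta([r_1, r_2]) \subseteq [\vartheta_-, \vartheta_+]$ with $\vartheta_+ - \vartheta_- \leq \pi$. Setting $\vartheta_* = \tfrac{1}{2}(\vartheta_- + \vartheta_+)$ and $\omega = (\cos \vartheta_*, \sin \vartheta_*)$, the angle between $U_0(s)$ and $\omega$ is at most $\tfrac{\pi}{2}$ for every $s \in [r_1, r_2]$, so
\al{\la U_0(s), \omega \ra \geq \cos\bigl(\tfrac{\vartheta_+ - \vartheta_-}{2}\bigr) \geq 0, \qquad s \in [r_1,r_2].}

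Second, define the scalar function $f(s) = \la c(s), \omega \ra$. Since $f'(s) = \la \dot c(s), \omega\ra = |\dot c(s)| \, \la U_0(s), \omega\ra$, the previous step shows $f'(s) \geq 0$ on $[r_1, r_2]$, so $f$ is non-decreasing there. But $c(r_1) = c(r_2)$ forces $f(r_1) = f(r_2)$, hence $f$ is constant, hence $f' \equiv 0$, hence $\la U_0(s), \omega \ra = 0$ for all $s \in [r_1,r_2]$ (using that $|\dot c(s)| > 0$ because $c$ is an immersion).

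Third, this means $U_0(s) \in \{+\omega^\perp, -\omega^\perp\}$ on $[r_1, r_2]$. By continuity of $U_0$ and connectedness of $[r_1, r_2]$, $U_0$ is constant on this interval, with value $\pm\omega^\perp$. But then $\dot c(s) = |\dot c(s)|(\pm \omega^\perp)$ points in a single fixed direction on $[r_1, r_2]$, so the map $s \mapsto \la c(s), \pm\omega^\perp\ra$ is strictly increasing on $[r_1, r_2]$, which rules out $c(r_1) = c(r_2)$. This contradiction establishes the existence of $s_1 < s_2$ in $[r_1, r_2]$ with $\vartheta(s_2) - \vartheta(s_1) > \pi$.

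The argument is essentially the same geometric idea used in Lemma \ref{geometrylemma} (separating the image of the unit tangent from a hemisphere by a supporting vector $\omega$), now applied to produce a monotone projection; the only subtlety is ruling out the borderline case $\vartheta_+ - \vartheta_- = \pi$, which is handled by the fact that equality in $f' \geq 0$ combined with $f(r_1) = f(r_2)$ forces $U_0$ to be constant, incompatible with $c$ returning to its starting point. I expect no serious obstacle here.
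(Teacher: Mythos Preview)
Your proof is correct and follows essentially the same route as the paper's: both argue by contradiction, choose a direction $\omega$ so that $\la \dot c(s),\omega\ra\geq 0$ on $[r_1,r_2]$, and use $c(r_1)=c(r_2)$ to force the integral $\int_{r_1}^{r_2}\la \dot c(s),\omega\ra\,ds$ to vanish. The paper's version is terser and simply asserts the integral is strictly positive, whereas you spell out the borderline case $\vartheta_+-\vartheta_-=\pi$ explicitly by showing $f'\equiv 0$ forces $U_0$ constant, which is a welcome clarification but not a different idea.
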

\begin{proof}
Since $c(r_1)=c(r_2)$, for every $\omega\in \RR^2$ we have
\al{\int_{r_1}^{r_2} \la \dot{c}(s), \omega \ra ds =0.}
But if $U_0([r_1,r_2])$ is contained in a closed semi-circle, then there exists $\omega_0\in \RR^2$ such that 
\al{\int_{r_1}^{r_2} \la \dot{c}(s), \omega_0 \ra ds >0,}
a contradiction. We conclude that $U_0([r_1,r_2])$ contains an arc of length $>\pi$ as claimed. 
\end{proof}
Proposition \ref{keyprop} gives a sufficient condition in terms of $\vartheta$ for $\Ksing \cap D(s_1,s_2)$ to be non-empty. We can also give a sufficient condition for no singularity in terms of $\vartheta$ and the initial velocity $v$.
\begin{lem}\label{nolocalsingularitylemma}
Let $\phi \colon \RR^2 \to \EE^{1+2}$ be an evolution by isothermal gauge for a $C^1\times C^0$ initial data $(\CC,V)$. Writing $\vartheta$ as in \eqref{theta} and $v$ as in \eqref{vequation} and \eqref{initialdataorthog1}, suppose that
\eq{\label{nolocalsingularityestimate} \sup_{r_1,r_2\in[s_1,s_2]} |\vartheta(r_2)-\vartheta(r_1)|^2 + \sup_{r\in[s_1,s_2]} |v(r)|^2 < 1.}
Then, with $\Ksing$ as in \eqref{ksing} and $D(s_1,s_2)$ as in \eqref{characteristicdiamond}, it follows
\eq{\label{nolocalsingularity}\Ksing \cap D(s_1,s_2) =\emptyset.}
\end{lem}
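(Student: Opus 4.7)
The plan is to use the characterization \eqref{ksingcharacterisation} of $\Ksing$: for $(s,t)\in D(s_1,s_2)$ both $s\pm t$ lie in $[s_1,s_2]$, so it suffices to show $a_+(\xi)\neq a_-(\eta)$ for every $\xi,\eta\in[s_1,s_2]$. Since $a_\pm(s)=(\cos\alpha_\pm(s),\sin\alpha_\pm(s))$, this is equivalent to $\alpha_+(\xi)-\alpha_-(\eta)\notin 2\pi\ZZ$, and I would aim for the sharper statement that $\alpha_+(\xi)-\alpha_-(\eta)\in(0,2\pi)$ for all such $\xi,\eta$.

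First I would compute, directly from \eqref{alphaplus}--\eqref{alphaminus},
\[
\alpha_+(\xi)-\alpha_-(\eta)-\pi=\bigl(\vartheta(\xi)-\vartheta(\eta)\bigr)+\arcsin\mu(\xi)+\arcsin\mu(\eta),
\]
and observe that $|\mu(s)|=|v(s)|$ since $v=\mu\, U_0^{\perp}$ with $|U_0^{\perp}|=1$. Setting $\Theta=\sup_{r_1,r_2\in[s_1,s_2]}|\vartheta(r_2)-\vartheta(r_1)|$ and $V=\sup_{r\in[s_1,s_2]}|v(r)|$, the triangle inequality together with monotonicity of $\arcsin$ on $[-V,V]\subset(-1,1)$ yields
\[
|\alpha_+(\xi)-\alpha_-(\eta)-\pi|\le\Theta+2\arcsin V
\]
for all $\xi,\eta\in[s_1,s_2]$.

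The main step is then the elementary inequality: if $V\in[0,1)$ and $\Theta^2+V^2<1$, then $\Theta+2\arcsin V<\pi$. I would prove this by setting $\theta=\arcsin V\in[0,\pi/2)$, so that $V=\sin\theta$ and $\sqrt{1-V^2}=\cos\theta$; the hypothesis then gives $\Theta<\cos\theta$, reducing the claim to $\cos\theta+2\theta<\pi$ on $[0,\pi/2)$, which holds since $f(\theta)=\cos\theta+2\theta$ has derivative $f'(\theta)=2-\sin\theta>0$ there and $f(\pi/2)=\pi$.

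Combining these steps gives $|\alpha_+(\xi)-\alpha_-(\eta)-\pi|<\pi$, hence $\alpha_+(\xi)-\alpha_-(\eta)\in(0,2\pi)$ and in particular is not a multiple of $2\pi$, which proves \eqref{nolocalsingularity}. I do not anticipate a real obstacle: the argument is essentially trigonometric bookkeeping, and what makes hypothesis \eqref{nolocalsingularityestimate} fit so cleanly is precisely the borderline identity $\cos(\pi/2)+2\cdot(\pi/2)=\pi$, which is what forces the quadratic shape $\Theta^2+V^2<1$ of the hypothesis.
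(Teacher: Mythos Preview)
Your proof is correct and follows essentially the same route as the paper. The paper's own proof simply asserts that ``it follows easily from \eqref{nolocalsingularityestimate} and trigonometric identities that $a_+(\xi)\neq a_-(\eta)$ for $\xi,\eta\in[s_1,s_2]$ (see Figure~\ref{isothermalfigure})'' and then invokes \eqref{ksingcharacterisation}; your argument supplies exactly those trigonometric details, bounding $|\alpha_+(\xi)-\alpha_-(\eta)-\pi|$ by $\Theta+2\arcsin V$ and verifying the elementary inequality $\cos\theta+2\theta<\pi$ on $[0,\pi/2)$.
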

\begin{proof}
Writing $a_\pm$ as in \eqref{aplusminus}, it follows easily from \eqref{nolocalsingularityestimate} and trigonometric identities that $a_+(\xi)\neq a_-(\eta)$ for $\xi,\eta \in [s_1,s_2]$ (see Figure \ref{isothermalfigure}). As $\Ksing$ is characterised by \eqref{ksingcharacterisation}, the claim follows.
\end{proof}
\begin{rem}\label{globalexistenceremark}
If the initial data $(\CC,V)$ satisfies the estimate \eqref{nolocalsingularityestimate} on $[s_1, s_2]=\RR$, then by Lemma \ref{nolocalsingularitylemma} it follows $\Ksing = \emptyset$, so the evolution by isothermal gauge $\phi$ parameterises a properly immersed timelike maximal surface $\Imm(\phi)$ which contains $\CC$ and is tangent to $V$ along $\CC$. This is a global existence result which does not require any decay of initial data $(\CC,V)$ at infinity, and may be compared with recent results of \cite{LYY} and \cite{Wongsmalldata}. 
\end{rem}
\begin{cor}
Let $\CC\colon \RR \to \EE^{1+2}$ be given as $\CC(s)=(0,s,0)$ (i.e.\ $\mathrm{Im}(\CC)$ is a straight line) and let $V$ be any smooth timelike velocity along $\CC$. 
Let $\phi\colon \RR^2 \to \EE^{1+2}$ be the evolution of $(\CC,V)$ by isothermal gauge. Then $\Imm(\phi)$ is a smooth properly immersed timelike maximal surface containing $\mathrm{Im}(\CC)$ and tangent to $V$ along $\CC$.
\end{cor}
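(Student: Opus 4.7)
The plan is to apply Lemma \ref{nolocalsingularitylemma} on every compact interval and then take the union. First I would put the initial data into the form required by the isothermal gauge construction: since $\mathrm{Im}(\CC)$ is the $x^1$-axis, any reparametrization has $\dot{c}(s)$ proportional to $(1,0)$, and writing $V=(1,v)$ with the orthogonality and normalization conditions \eqref{initialdataorthog1}, \eqref{initialdataorthog2} imposes $v(s)=(0,v^2(s))$ and $|\dot{c}(s)|^2+|v^2(s)|^2=1$. After reparametrization the unit tangent $U_0(s)=(1,0)$ is constant, so the angle function $\vartheta(s)$ from \eqref{theta} can be taken identically $0$.

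Next I would fix arbitrary $s_1<s_2$ and verify the hypothesis \eqref{nolocalsingularityestimate} of Lemma \ref{nolocalsingularitylemma} on $[s_1,s_2]$. The first term vanishes because $\vartheta$ is constant. For the second term, since $V$ is smooth and timelike at every point we have $|v(r)|^2<1$ for each $r\in [s_1,s_2]$, and by continuity of $|v|$ on the compact interval this supremum is attained and hence strictly less than $1$. Thus
\eq{\sup_{r_1,r_2\in[s_1,s_2]} |\vartheta(r_2)-\vartheta(r_1)|^2 + \sup_{r\in[s_1,s_2]} |v(r)|^2 = \sup_{r\in[s_1,s_2]} |v(r)|^2 < 1,}
so Lemma \ref{nolocalsingularitylemma} gives $\Ksing \cap D(s_1,s_2)=\emptyset$.

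Since any point $(s_0,t_0)\in\RR^2$ lies in $D(s_1,s_2)$ as soon as $s_1<s_0-|t_0|$ and $s_2>s_0+|t_0|$, we conclude $\Ksing=\emptyset$. Consequently the map $\phi$ produced by d'Alembert's formula \eqref{dalembert2} is a smooth immersion everywhere on $\RR^2$, and by the discussion of Subsection \ref{isothermalgaugesection} it is timelike and satisfies the maximal surface equations on all of $\RR^2$. Properness of $\phi$ is then automatic from Lemma \ref{propernesslemma}, and the fact that $\mathrm{Im}(\phi)$ contains $\mathrm{Im}(\CC)$ and is tangent to $V$ along $\CC$ is built into the construction via \eqref{initialconditions}. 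There is no real obstacle; the only point requiring mild care is the passage from the pointwise bound $|v(s)|<1$ to a strict uniform bound on each compact interval, which is handled by continuity.
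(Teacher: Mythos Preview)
Your proof is correct and follows the same strategy as the paper: observe that $\vartheta\equiv 0$ because the initial curve is a straight line, and then invoke Lemma \ref{nolocalsingularitylemma} to conclude $\Ksing=\emptyset$.

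There is one minor difference worth noting. The paper's proof applies Lemma \ref{nolocalsingularitylemma} directly on the interval $[s_1,s_2]=\RR$, which as written would require the uniform bound $\sup_{r\in\RR}|v(r)|^2<1$; this is not part of the hypothesis (only that $V$ is timelike at each point). Your localization to arbitrary compact intervals $[s_1,s_2]$, together with the compactness argument that $\sup_{[s_1,s_2]}|v|^2<1$ is attained and hence strict, and the observation that the diamonds $D(s_1,s_2)$ exhaust $\RR^2$, is the cleaner way to make the argument airtight. Otherwise the two proofs are identical in content.
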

\begin{proof}
Since $\vartheta \equiv 0$ and $V$ is timelike, estimate \eqref{nolocalsingularity} holds on the interval $[s_1,s_2]=\RR$. So $\Ksing = \emptyset$ by Lemma \ref{nolocalsingularitylemma} and the claim follows.
\end{proof}
\begin{rem} If $\CC \colon \RR \to \{x^0=0\} \subseteq \EE^{1+2}$ is a smooth proper immersion such that $\Imm(\CC)$ is not a straight line, then it is easy to find a smooth vector field $V$ along $\CC$ for which the evolution $\phi$ of $(\CC,V)$ in isothermal gauge becomes singular in finite time (i.e. $\Ksing \neq \emptyset$). Indeed, let $\CC(s)=(0,c(s))$ and $U_0(s)=\nicefrac{\dot{c}(s)}{|\dot{c}(s)|}$, and choose $s_1,s_2\in \RR$ so that $U_0(s_1)\neq U_0(s_2)$. Let $\beta\in (0,2\pi)$ be such that $U_0(s_2)$ is given by an anti-clockwise rotation of $U_0(s_1)$ by $\beta$ degrees, and define $V(s)=(1,v(s))$ by $v(s)=\cos \frac{\beta}{2} U_0(s)^\perp$, where $\perp$ denotes an anti-clockwise rotation by $\frac{\pi}{2}$ degrees. Writing $a_\pm(s)=v(s)\pm \sin \frac{\beta}{2} U_0(s)$ for the spatial components of the null vectors $A_\pm(s)=(1,a_\pm(s))$ which span the tangent plane $T_{\CC(s)}\Imm(\phi) = \spann\{ \dot{\CC}(s), V(s) \}$, we may compute from the trigonometric identities \eqref{alphaplus} and \eqref{alphaminus} that $a_+(s_2)=a_-(s_1)$. So $\Ksing \neq \emptyset$ by \eqref{ksingcharacterisation}.
\end{rem}

From Lemma \ref{nolocalsingularitylemma} we may obtain the following short-time existence result, which does not require any decay of the initial data at infinity.
\begin{cor}[Short-time existence]\label{ste}
Let $\phi\colon \RR^2 \to \EE^{1+2}$ be an evolution by isothermal gauge for a $C^k\times C^{k-1}$ initial data $(\CC,V)$, $k\geq1$, and let $U_0$ denote the unit tangent vector along $\CC$ as in \eqref{initialunittangent}. Suppose $V$ is uniformly timelike, i.e.\ with $V=(1,v)$ we have $\sup_{s\in\RR} |v(s)| < 1$, and suppose $U_0$ is uniformly continuous. Then there exists $T>0$ depending only on $\sup_{s\in\RR}\frac{1}{1-|v(s)|}$ and the modulus of continuity of $U_0$ such that $\Imm(\phi)\cap \{(x^0,x^1,x^2)\colon |x^0|\leq T\}$ is a $C^k$ immersed timelike maximal surface containing $\mathrm{Im}(\CC)$ and tangent to $V$ along $\CC$. 
\end{cor}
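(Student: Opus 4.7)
The plan is to apply Lemma \ref{nolocalsingularitylemma} uniformly on characteristic diamonds $D(s-T,s+T)$ of a fixed width $2T$, and to exploit that $\bigcup_{s\in\RR}D(s-T,s+T)=\RR\times[-T,T]$. Setting $\delta_0:=\sup_{s\in\RR}|v(s)|<1$, the term $\sup_{r\in[s-T,s+T]}|v(r)|^2$ in \eqref{nolocalsingularityestimate} is bounded by $\delta_0^2$, so it suffices to exhibit $T>0$ such that
\[\sup_{r_1,r_2\in[s-T,s+T]}|\vartheta(r_2)-\vartheta(r_1)|<\sqrt{1-\delta_0^2}\]
for every $s\in\RR$.

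The key step is to bootstrap uniform continuity of $U_0$ into a uniform bound on the oscillation of the continuous lift $\vartheta$. Given $\eps\in(0,2)$ (to be chosen below), pick $T>0$, from the modulus of continuity of $U_0$, so that $|r_1-r_2|\leq 2T$ implies $|U_0(r_1)-U_0(r_2)|<\eps$. I claim that then $|\vartheta(r_2)-\vartheta(r_1)|<4\arcsin(\eps/2)$ whenever $r_1,r_2$ lie in a common interval of length at most $2T$. Indeed, fix such an interval $I$ and $r_0\in I$, and consider the continuous function $f(r):=|\vartheta(r)-\vartheta(r_0)|$, which satisfies $f(r_0)=0$. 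If $f$ attained the value $2\arcsin(\eps/2)\leq\pi$ somewhere in $I$, the intermediate value theorem would yield $r^*\in I$ with $f(r^*)=2\arcsin(\eps/2)$, and hence $|U_0(r^*)-U_0(r_0)|=2\sin(f(r^*)/2)=\eps$, contradicting the choice of $T$. So $f<2\arcsin(\eps/2)$ throughout $I$, and the triangle inequality gives the claim.

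Choosing $\eps>0$ small enough that $4\arcsin(\eps/2)<\sqrt{1-\delta_0^2}$---a choice depending only on $\delta_0$, equivalently only on $\sup_{s\in\RR}\tfrac{1}{1-|v(s)|}$---the resulting $T>0$ depends only on $\delta_0$ and the modulus of continuity of $U_0$. For every $s\in\RR$, the estimate \eqref{nolocalsingularityestimate} then holds on $[s-T,s+T]$, so by Lemma \ref{nolocalsingularitylemma} we have $\Ksing\cap D(s-T,s+T)=\emptyset$, and hence $\Ksing\cap(\RR\times[-T,T])=\emptyset$. Thus $\gamma_s$ is nowhere zero on this slab, and combined with the isothermal identities \eqref{og1}, \eqref{og2} and the wave equation \eqref{wave}, $\phi|_{\RR\times[-T,T]}$ is a $C^k$ timelike maximal immersion; since $\phi^0(s,t)=t$, its image coincides with $\Imm(\phi)\cap\{|x^0|\leq T\}$, as required.

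The main obstacle is the second paragraph: translating uniform continuity of the $S^1$-valued map $U_0$ into a uniform bound on its $\RR$-valued continuous lift $\vartheta$, since in general the chord length $|U_0(r_1)-U_0(r_2)|$ only controls $\vartheta(r_2)-\vartheta(r_1)$ modulo $2\pi$. The bootstrap/IVT argument resolves this by working on intervals so short that $\vartheta$ cannot wind by a full turn without first passing through an intermediate value at which the chord bound would be violated.
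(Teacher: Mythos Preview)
Your proof is correct and follows essentially the same route as the paper: cover the slab $\RR\times[-T,T]$ by characteristic diamonds and apply Lemma~\ref{nolocalsingularitylemma} on each. The only differences are cosmetic---you use the continuous family $\{D(s-T,s+T)\}_{s\in\RR}$ where the paper uses the discrete lattice $\{D(s_k,s_{k+2})\}_{k\in\ZZ}$ with $s_k=\tfrac{\delta k}{2}$---and you supply the IVT argument passing from uniform continuity of the $S^1$-valued map $U_0$ to a uniform oscillation bound on its lift $\vartheta$, a step the paper simply asserts.
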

\begin{proof}
Take $\eps>0$ so that $\sup_{s\in\RR}|v(s)|^2\leq 1-\eps$. Since $U_0$ is uniformly continuous, there exists $\delta>0$, depending on $\eps$ and the modulus of continuity of $U_0$, such that $|\vartheta(r_2)-\vartheta(r_1)|^2<\eps$ provided $|r_1-r_2|\leq\delta$. Defining $s_k=\frac{\delta k}{2}$ for all $k\in \ZZ$ gives
\al{ \sup_{r_1,r_2\in [s_k,s_{k+2}]} |\vartheta(r_2)-\vartheta(r_1)|^2  + \sup_{r\in [s_k,s_{k+2}]} |v(r)|^2 < 1,}
so $\Ksing \cap D(s_k,s_{k+2})=\emptyset$ for all $k\in \ZZ$ by Lemma \ref{nolocalsingularitylemma}. With $T=\frac{\delta}{4}$, the set $\{(s,t) \in \RR^2 \colon |t|\leq T\}$ is contained in $\cup_{k\in\ZZ} D(s_k,s_{k+2})$, so $\Ksing \cap \{(s,t)\in \RR^2 \colon |t|\leq T\} = \emptyset$, and the claim follows.
\end{proof}
\subsection{Examples of $C^1$ properly embedded surfaces which are smooth timelike maximal surfaces away from some null set}\label{c1extendibleexamples}
We will now give some (non-generic) examples where the Cauchy evolution for a timelike maximal surface becomes singular in finite time, but the evolution in isothermal gauge beyond singular time yields a $C^1$ embedded surface.
\begin{ex}[$C^1$ embedded maximal surfaces which are smooth away from a pair of null lines and contain non-graphical compact sets]\label{regularexample1} Let $l_1$ and $l_2$ be the parallel half lines which take their endpoints at $(-\frac{1}{2},0)$ and $(\frac{1}{2},0)$ and which are obtained as left and right translations respectively by a distance $\frac{1}{2}$ of the upper $x^2$-axis. Let $f$ be a smooth segment of embedded curve of length $2L>0$, which smoothly joins $l_1$ and $l_2$ at their endpoints, such that the unit tangent along $f$ has non-vanishing $x^1$ component everywhere except at the endpoints. See Figure \ref{singulargrimreaperevolution}(a). Let $c\colon \RR \to \RR^2$ be a parameterisation of $l_1\cup l_2 \cup f$ by arclength,  
\al{c(s)=\begin{cases} 
\left(-\frac{1}{2}, -s -L\right) &\text{for} \hskip3pt s\in(-\infty,-L] \\
\left( f^1(s), f^2(s) \right) &\text{for} \hskip3pt s\in(-L,L) \\
\left(\frac{1}{2}, s-L\right) &\text{for} \hskip3pt s\in[L,\infty) .
\end{cases}}
Writing $\dot{c}(s)=(\cos \vartheta(s), \sin \vartheta(s))$, and we see $\mathrm{Im}(\vartheta)=[-\frac{\pi}{2},\frac{\pi}{2}]$. Moreover, $\vartheta(s)\in (-\frac{\pi}{2},\frac{\pi}{2})$ for $s\in (-L,L)$. 

The evolution of $\CC(s)=(0,c(s))$ with initial velocity $V=(1,0,0)$ by isothermal gauge is $\phi(s,t)=(t,\gamma(s,t))$, where $\gamma(s,t)=\frac{1}{2}\left(c(s+t)+c(s-t)\right)$. By Proposition \ref{keyprop}, it follows that $\Ksing$, as defined in \eqref{ksing}, is non-empty. We will now compute $\Ksing$ explicitly. Since $\dot{c}(s+t)=-\dot{c}(s-t)$ if and only if $s-t\leq-L$ whilst $s+t\geq L$ or $s-t\geq L$ whilst $s+t\leq-L$, it follows $\Ksing=\Ksing^+\cup\Ksing^-$ where
\al{\Ksing^+&=\left\{(s,t) \colon t\geq L,\hskip3pt L-t \leq s \leq t-L \right\}, \\
 \Ksing^- &=\left\{(s,t) \colon t\leq-L,\hskip3pt t+L \leq s \leq -t-L \right\}.}
We then see $\Ssing = \Ssing^+ \cup \Ssing^-$, where
\al{\Ssing^+ &= \left\{ (t,0,t-L) \colon t\geq L \right\}, \\
 \Ssing^- &= \left\{ (t,0,-t-L) \colon t\leq -L \right\}.}
i.e.\ $\Ssing$ consists of a pair of null half-lines, one emanating towards the future from the point $(L,0,0)$ and one emanating towards the past from the point $(-L,0,0)$. $\Sigma\setminus \Ssing$ is a smooth immersed timelike maximal surface.

Note that the unit tangent $\dot{c}(s)$ is always confined to a closed semi-circle as $\dot{c}^1(s)\geq 0$. Writing $U(s,t)=\frac{\gamma_s(s,t)}{|\gamma_s(s,t)|}=\frac{\dot{c}(s+t)+\dot{c}(s-t)}{|\dot{c}(s+t)+\dot{c}(s-t)|}$ for the spatial unit tangent, defined {a priori} for $(s,t)\in\RR^2\setminus\Ksing$, it is seen that $\lim_{(s,t)\to\Ksing}U(s,t)=(1,0)$. Thus $U(s,t)$ extends continuously to a unit tangent vector field along $\gamma(s,t)$. It may then be seen to follow that $\Sigma$ is a $C^1$ immersed causal surface. See Figure \ref{singulargrimreaperevolution}(b).

Applying Proposition \ref{regularityloss}, we see that the curvature of the cross sections $\gamma(\cdot,t)$ blows up as $t\nearrow L$, so $\Sigma$ is not a $C^2$-immersed surface in any neighbourhood of $\phi(0,L)$. Since $\gamma(s,t)=\gamma(s,-t)$, we see that $\Sigma$ is invariant under a reflection through the $\{x^0 = 0 \}$ plane, and so $\Sigma$ is not a $C^2$ immersed surface in any neighbourhood of $\phi(0,-L)$. 

It is easy to find a compact subset $K\subseteq \Sigma$ which is not a graph. We observe that the image of the spacelike unit normal in this example (defined on $\Sigma \setminus \Ssing$) is contained in a closed hemi-hyperboloid.
\begin{figure}[h]
    \centering
    	\captionsetup{width=.9\linewidth}
    \begin{tabular}{cc}
    \begin{minipage}{0.35\textwidth}
        \centering
        \includegraphics[width=\textwidth]{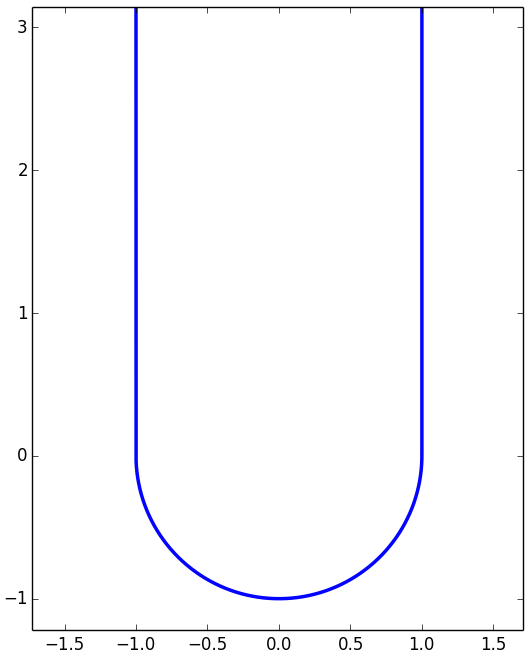}
            \end{minipage}\hfill
    &
    \begin{minipage}{0.55\textwidth}
        \centering
        \includegraphics[width=\textwidth]{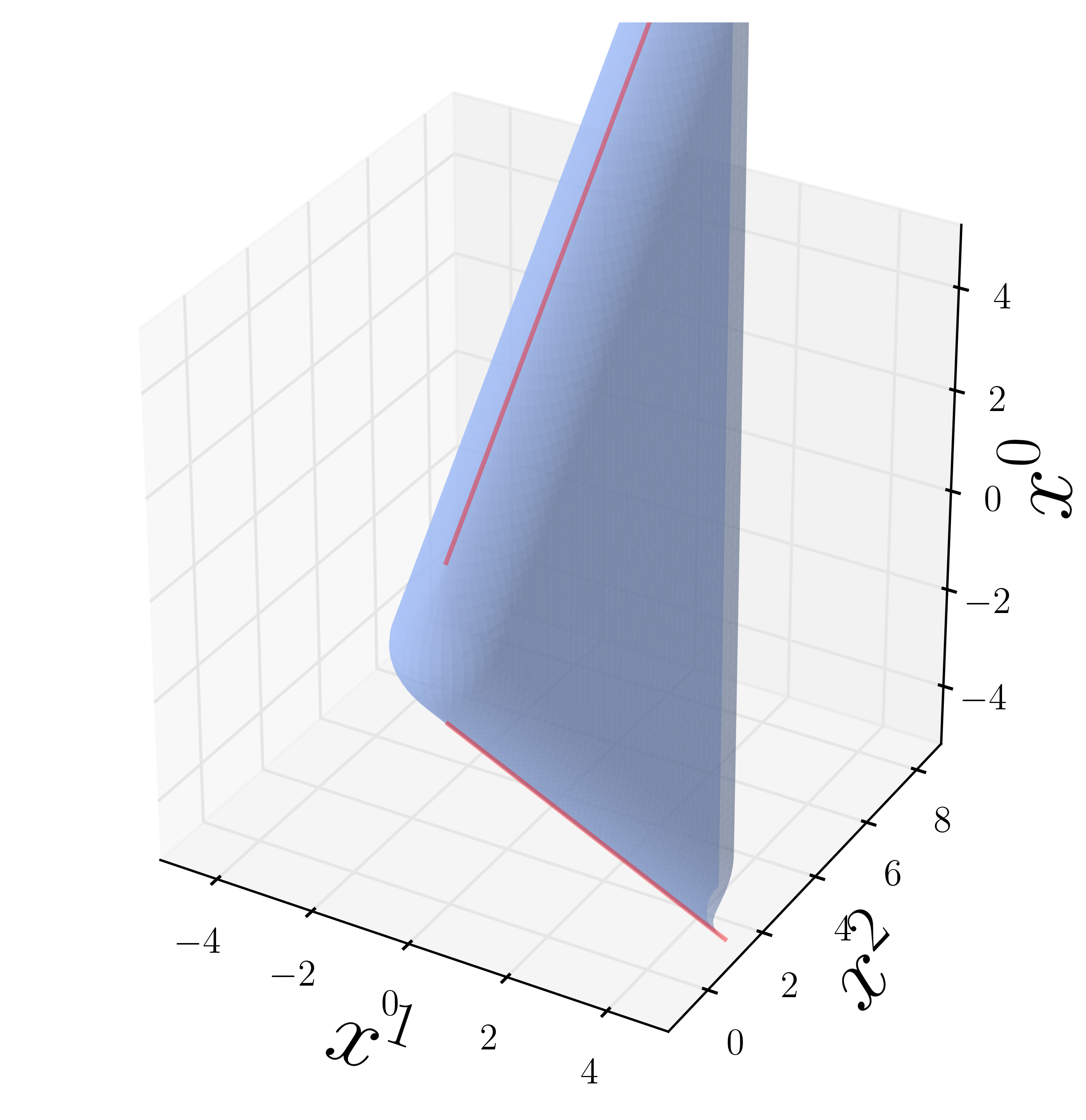}
    \end{minipage}\\
    (a) & (b)
    \end{tabular}
    \caption[$C^1$ surfaces which are timelike maximal surfaces away from a pair of null half lines \& contain non-graphical compact subsets]{  (a) A cigar curve which contains a compact subset which is not a graph. (b) Evolution of (a) by isothermal gauge to a $C^1$ embedded maximal surface $\Sigma$ which is null along null lines $\Ssing$ shown in red. There is a compact subset $K\subseteq \Sigma$ which is not a graph.
      }
    \label{singulargrimreaperevolution}
\end{figure}
\end{ex}
\begin{ex}[$C^1$ embedded doubly-periodic maximal surfaces which are smooth away from isolated null points situated on a rectangular lattice, and which are graphs, but not $C^1$ graphs]\label{regularexample2}
Let $f=(f^1,f^2)\colon[0,L] \to \RR^2$, parametrize a section of curve by arclength, so that $f(0)=(-1,0)$, $f(L)=(1,0)$, $\dot{f}^1(s)>0$ for $s\in (0,L)$, $\dot{f}^1(0)=\dot{f}^1(L)=0$ and $\frac{d^k f^2}{ds^k}(0)=\frac{d^k f^2}{ds^k}(L)=0$ for $k\geq 2$. Now extend $f$ periodically to a smooth immersion $c\colon \RR \to \RR^2$ by
\al{c(s)=\begin{cases} 
\left(f^1(s),f^2(s)\right) &\text{for} \hskip3pt s\in[0,L] \\
\left(2+f^1(s),-f^2(s) \right) &\text{for} \hskip3pt s\in(L,2L) \\
\left(4n,0\right)+c(s-2nL) &\text{for} \hskip3pt s\in[2nL,2(n+1)L), n\in \ZZ \setminus \{0\} .
\end{cases}}
See Figure \ref{periodicsurface}(a). It may be seen that $\mathrm{Im}(c)$ defines a graph over the $x^1$ axis, but not a $C^1$ graph. 

As $c$ is parametrized by arclength, the evolution by isothermal gauge $\phi(s,t)=(t,\gamma(s,t))$ of the curve $\CC(s)=(0,c(s))$ with initial velocity $V=(1,0,0)$ is given by $\gamma(s,t)=\frac{1}{2} \left( c(s+t) + c(s-t) \right)$. Note that $(s,t)\in \Ksing$ if $\frac{s+t}{L}$ is an odd integer and $\frac{s-t}{L}$ is an even integer or vise-versa. From this we deduce that
\al{\Ksing= \left\{\left(\frac{mL}{2},\frac{nL}{2}\right) \colon m \hskip2pt \text{and} \hskip2pt n \hskip2pt \text{are odd integers} \right\}}
and since $c(\frac{nL}{2})=(n-1,0)$ for all $n\in \ZZ$, we have
\al{\Ssing= \left\{\left(\frac{nL}{2}, k, 0 \right) \colon n \hskip3pt \text{is an odd integer and} \hskip3pt k \hskip 3pt \text{is an even integer} \right\} }
which is a rectangular lattice of isolated points.

$\Sigma$ is a smooth, timelike immersed surface away from $\Ssing$, and again we observe that $\dot{c}^{1}(s)\geq0$, and so $\lim_{(s,t)\to\Ksing}U(s,t)=(1,0)$, and thus $\Sigma$ is a $C^1$ immersed surface. By Proposition \ref{regularityloss} we see that $\Sigma$ is not a $C^2$ immersed surface in any neighbourhood of a point in $\Ssing$. $\Sigma$ is a graph over the $x^0$--$x^1$ plane, but not a $C^1$ graph. See Figure \ref{periodicsurface}(b). 
\begin{figure}[h]
    \centering
    	 \captionsetup{width=.9\textwidth}
    \begin{tabular}{cc}
    \begin{minipage}{0.35\textwidth}
        \centering
        \includegraphics[width=\textwidth]{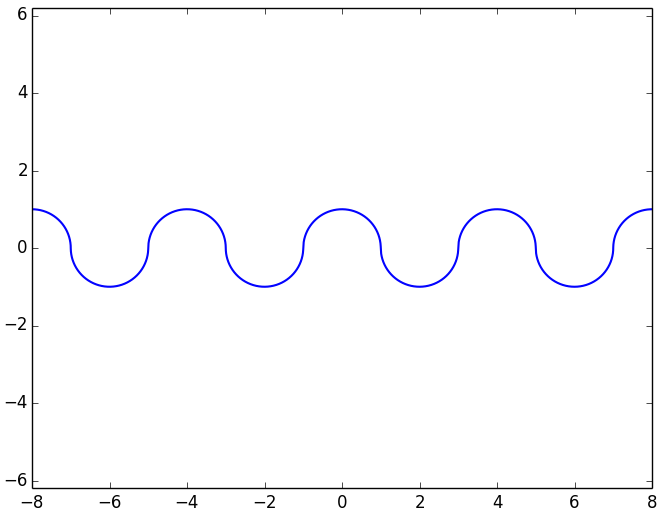}
            \end{minipage}\hfill
    &
    \begin{minipage}{0.55\textwidth}
        \centering
        \includegraphics[width=\textwidth]{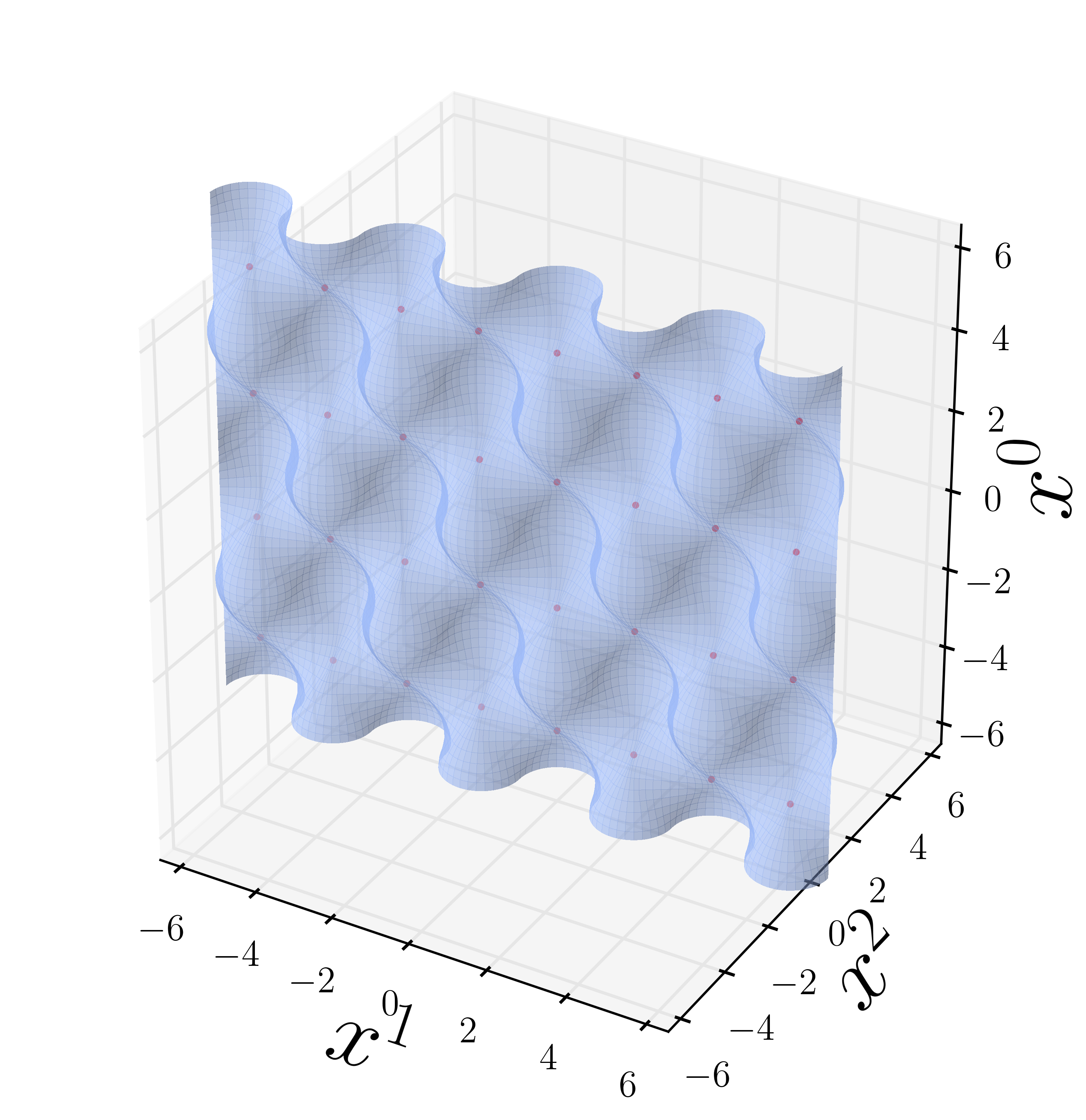}
    \end{minipage}\\
    (a) & (b)
    \end{tabular}
    \caption[$C^1$ doubly-periodic surfaces which are timelike maximal surfaces away from a lattice of isolated null points \& contain non-$C^1$-graphical compact subsets]{  (a) A periodic wedge of hemi-circles which is a graph, but not a $C^1$ graph. (b) Evolution of (a) to a $C^1$ doubly-periodic maximal surface with null points $\Ssing$ on a rectangular lattice shown in red. $\Sigma$ is a graph over the $x^0$--$x^1$ plane, but not a $C^1$ graph.
      }
\label{periodicsurface}
\end{figure}
\end{ex}
\subsection{Discontinuity of the spatial unit tangent: proof of Theorem \ref{gaugetheorem}}\label{gaugesection}
The surfaces constructed by isothermal gauge in Example \ref{regularexample1} are $C^1$ embedded, are smooth timelike maximal surfaces away from a pair of null lines, and contain compact subsets which are non-graphical (compare with Theorem \ref{maintheorem}).
Note that in Examples \ref{regularexample1} and \ref{regularexample2}, the image of the tangent vector $U_0$ along the initial curve $\CC$ is exactly a closed semi-circle.

In this section we will show that the behaviour observed in Example \ref{regularexample1} is borderline. To be precise, we will prove Theorem \ref{gaugetheorem} which states that: if $\phi\colon \RR^2 \to \EE^{1+2}$ is an evolution by isothermal gauge for a $C^1\times C^0$ initial data $(\CC,V)$, and if the image of the unit tangent vector along $\CC$ contains an arc of length $>\pi$, i.e.\ if there exist $s_1, s_2\in \RR$ so that
\eq{\label{sharpcondition}\vartheta(s_2)-\vartheta(s_1) > \pi,}
where $\vartheta$ is as in \eqref{theta}, then the spatial unit tangent (defined along $\phi|_{\RR^2\setminus \Ksing}$) admits no extension to a continuous unit tangent vector field along $\phi$. 

When $\CC$ is a closed curve, the discontinuity of the spatial unit tangent was proved by Nguyen \& Tian \cite[Prop.\ 2.9 \& Prop.\ 2.11]{NT} (for smooth initial data) and by Jerrard, Novaga and Orlandi in \cite[Theorem 5.1]{jerrardetal} (for $C^1\times C^0$ initial data). The proof of Theorem \ref{gaugetheorem} extends the argument of those authors to the spatially non-compact case (note that if $\CC$ is closed, then there exist $s_1,s_2$ so that \eqref{sharpcondition} is satified by Lemma \ref{selfintersectionlemma}).

Let $\phi\colon \RR^2 \to \EE^{1+2}$, $\phi(s,t)=(t,\gamma(s,t))$ be an evolution by isothermal gauge. As in Section \ref{singularsetanalysis}, we write 
\al{a_\pm(s)=\gamma_t(s,0) \pm \gamma_s(s,0),} so that $|a_\pm(s)|^2=1$. Recall from \eqref{alphaplus}, \eqref{alphaminus} that $a_\pm(s)=(\cos \alpha_\pm (s) , \sin \alpha_\pm(s))$, where
\al{\alpha_+(s) &= \vartheta(s) + \arcsin \left(\mu(s) \right) \\
\alpha_-(s) &= \vartheta(s) - \arcsin \left( \mu(s) \right) - \pi,} 
where $\vartheta$ and $\mu$ are defined by \eqref{theta} and \eqref{muidentity}.

Let us now introduce 
\eq{\label{beta}\beta(s,t)=\alpha_+(s+t)-\alpha_-(s-t).}
We have
\eq{\label{initialbeta}\beta(s,0)=\alpha_+(s)-\alpha_-(s)=2\arcsin(\tanh(\varphi(s)))+\pi \in (0,2\pi) \quad \text{for all}\hskip2pt s\in\RR.}
The proof of Theorem \ref{gaugetheorem} is via a study of the spatial unit tangent map 
\al{U(s,t)= \frac{\gamma_s(s,t)}{|\gamma_s(s,t)|},} 
which is well defined for $(s,t)\in \RR^2 \setminus \Ksing$. From \eqref{dalembert2} one may compute explicitly
\eqq{\label{tangentidentity} U(s,t) = \sgn\left(\sin \frac{\beta(s,t)}{2}\right) e(s,t) }
where 
\eq{\label{eidentity} e(s,t)=\left(-\sin \frac{\alpha_+(s+t)+\alpha_-(s-t)}{2}, \cos \frac{\alpha_+(s+t)+\alpha_-(s-t)}{2}\right)} 
is a continuous unit vector field along $\gamma(s,t)$ (note that $e(s,t)$ does not necessarily define a unit tangent vector field along $\gamma(s,t)$). 

We have $(s,t)\in \Ksing$ precisely when $\beta(s,t)\in 2\pi \ZZ$. From formula \eqref{tangentidentity}, it is apparent that to study when $U$ becomes discontinuous requires analysis of when $\sin\left(\frac{\beta}{2}\right)$ changes sign. 
\begin{lem}\label{betaswitcherlemma} Let $\phi\colon \RR^2 \to \EE^{1+2}$ be an evolution by isothermal gauge for a $C^1\times C^0$ initial data $(\CC,V)$. With $U_0$ denoting the unit tangent along $\CC$ as in \eqref{initialunittangent}, suppose that $\Imm(U_0)$ contains an arc of length $>\pi$, i.e.\ suppose there exist $s_1,s_2\in \RR$ such that
\al{\vartheta(s_2)-\vartheta(s_1) > \pi,}
where $\vartheta$ is as in \eqref{theta}. Then, with $\beta$ as in \eqref{beta}, there exists $(s_0,t_0)$ such that $\beta(s_0,t_0) \notin [0,2 \pi]$. Furthermore, if $\CC\colon \RR \to \{x^0=0\} \subseteq \RR^{1+2}$ is a proper immersion, then there exists a time $t_*\in \RR$ such that $\sin\left(\frac{\beta(\cdot,t_*)}{2}\right)$ takes both positive and negative values. 
\end{lem}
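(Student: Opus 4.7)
For the first statement, I will evaluate $\beta$ at the two points $\bigl(\tfrac{s_1+s_2}{2}, \pm\tfrac{s_2-s_1}{2}\bigr)$, whose backward characteristics meet $\{t=0\}$ in the pairs $(s_2, s_1)$ and $(s_1, s_2)$ respectively. The $\arcsin(\mu)$ contributions from the formulas \eqref{alphaplus}--\eqref{alphaminus} cancel, giving
\[
\beta\bigl(\tfrac{s_1+s_2}{2},\tfrac{s_2-s_1}{2}\bigr) - \beta\bigl(\tfrac{s_1+s_2}{2},-\tfrac{s_2-s_1}{2}\bigr) = \bigl(\alpha_+(s_2)-\alpha_+(s_1)\bigr)+\bigl(\alpha_-(s_2)-\alpha_-(s_1)\bigr) = 2\bigl(\vartheta(s_2) - \vartheta(s_1)\bigr) > 2\pi.
\]
Were both values in $[0, 2\pi]$, the difference would have absolute value at most $2\pi$, a contradiction; so at least one lies outside $[0, 2\pi]$.

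For the second statement, I plan to argue by contradiction via a connectedness-of-$\RR$ argument. Suppose no $t_*$ makes $\sin(\beta(\cdot, t_*)/2)$ attain both signs. The sets $G^\pm := \{t \in \RR : \pm \sin(\beta(\cdot, t)/2) \geq 0 \text{ everywhere on } \RR\}$ are closed (by continuity of $\beta$) and cover $\RR$, with $0 \in G^+$ because $\beta(\cdot, 0) \subseteq (0, 2\pi)$. By the first assertion $\beta$ takes a value outside $[0, 2\pi]$ at some $(s^*, t^*)$; then the intermediate value theorem applied to $\tau \mapsto \beta(s^*, \tau)$ between $\tau = 0$ and $\tau = t^*$ produces a point where $\beta \in (2\pi, 4\pi) \cup (-2\pi, 0)$, giving $\sin(\beta/2) < 0$, so $G^- \neq \emptyset$. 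Connectedness of $\RR$ forces $G^+ \cap G^- \neq \emptyset$; at any $t^\dagger$ in this intersection, $\sin(\beta(\cdot, t^\dagger)/2) \equiv 0$, and continuity of $s \mapsto \beta(s, t^\dagger)$ on the connected domain $\RR$ with values in the discrete set $2\pi \ZZ$ forces $\beta(\cdot, t^\dagger) \equiv 2\pi k$ for some $k \in \ZZ$. In particular the unit vectors $a_\pm$ satisfy the shift identity
\[
a_+(\xi) = a_-(\xi - 2t^\dagger) \qquad \text{for all } \xi \in \RR,
\]
and $t^\dagger \neq 0$ because $\beta(\cdot, 0)$ is not constant.

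The final step extracts a contradiction from properness of $\CC$. Using $2\dot c = a_+ - a_-$ together with the shift identity, a change of variables yields for every $\xi \in \RR$
\[
2\bigl(c(\xi) - c(0)\bigr) = \int_0^\xi \bigl(a_-(\tau - 2t^\dagger) - a_-(\tau)\bigr)\, d\tau = \int_{-2t^\dagger}^0 a_-(\tau)\, d\tau - \int_{\xi - 2t^\dagger}^\xi a_-(\tau)\, d\tau,
\]
and since $|a_-| \equiv 1$ the right-hand side has magnitude bounded by $4|t^\dagger|$ uniformly in $\xi$. Hence $c$ is a bounded map, contradicting $|c(s)| \to \infty$ as $|s| \to \infty$. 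The main obstacle in the plan is isolating the constant-value transition $\beta(\cdot, t^\dagger) \equiv 2\pi k$; once it is in hand, the contradiction with properness reduces to the one-line telescoping calculation above.
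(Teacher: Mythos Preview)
Your argument is correct and follows essentially the same route as the paper: the same two-point evaluation of $\beta$ at $\bigl(\tfrac{s_1+s_2}{2},\pm\tfrac{s_2-s_1}{2}\bigr)$ for the first claim, and the same closed-cover/connectedness argument on $\RR$ (your $G^\pm$ are the paper's sets $A,B$) for the second. The only variation is in the final contradiction: the paper observes directly that $\beta(\cdot,t^\dagger)\equiv 2\pi k$ gives $\gamma_s(\cdot,t^\dagger)=\tfrac{1}{2}\bigl(a_+(\cdot+t^\dagger)-a_-(\cdot-t^\dagger)\bigr)\equiv 0$, so $\gamma(\cdot,t^\dagger)$ collapses to a point, and then cites Lemma~\ref{propernesslemma}. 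Your telescoping integral instead transports the shift identity back to $c$ and bounds it directly; this is self-contained but amounts to re-deriving the content of Lemma~\ref{propernesslemma} in this special case.

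One small correction: your stated reason for $t^\dagger\neq 0$ (``$\beta(\cdot,0)$ is not constant'') is not quite right, since $\beta(\cdot,0)=2\arcsin(\mu)+\pi$ can be constant if $\mu$ is. The correct reason is simply that $\beta(\cdot,0)\in(0,2\pi)$ by \eqref{initialbeta}, hence never in $2\pi\ZZ$. This is harmless, since your bound $|c(\xi)-c(0)|\leq 2|t^\dagger|$ contradicts properness whether or not $t^\dagger=0$.
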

\begin{proof}
By identities \eqref{alphaplus} and \eqref{alphaminus}, we have 
\al{\big(\alpha_+(s_2)-\alpha_-(s_1)\big)-\big(\alpha_+(s_1)-\alpha_-(s_2)\big) = 2\big(\vartheta(s_2)-\vartheta(s_1)\big) > 2\pi}
and so, setting $s_0=\frac{1}{2}(s_1+s_2)$ and $t_0=\frac{1}{2}(s_1-s_2)$ gives
\al{\beta(s_0,-t_0) - \beta(s_0,t_0) &= \big(\alpha_+(s_0-t_0) - \alpha_-(s_0+t_0)\big) \\
&\hskip25pt -\big(\alpha_+(s_0+t_0)-\alpha_-(s_0-t_0)\big) \\
&> 2\pi.}
It follows that one of $\beta(s_0,-t_0)>2\pi$ or $\beta(s_0,t_0)<0$ holds, so $\beta(s_0,t_0)\notin [0,2\pi]$ for some $(s_0,t_0)$ as claimed.

Now suppose $\CC$ is proper, and suppose for a contradiction that there exists no time $t_*$ such that $\sin\left(\frac{\beta(\cdot,t_*)}{2}\right)$ takes both positive and negative values. Write $A=\{ t\in \RR \colon \sin\frac{\beta(s,t)}{2}\geq 0 \hskip2pt\text{for all}\hskip2pt s\in \RR\}$, $B=\{ t\in \RR \colon \sin\frac{\beta(s,t)}{2}\leq 0 \hskip2pt\text{for all}\hskip2pt s\in \RR\}$. Then $A$ and $B$ are closed sets, and we are supposing that $A \cup B = \RR$.

Note that $A$ is non-empty by \eqref{initialbeta}, whilst $\beta(s_0,t_0)\notin[0,2\pi]$ implies that $B$ is non-empty, and so by connectedness of $\RR$, $A\cap B$ must be non-empty. Taking $t_1\in A\cap B$ gives $\beta(\cdot,t_1)\equiv 2 k\pi$, which implies $\gamma_s(\cdot,t_1)\equiv 0$ so $\mathrm{Im}(\gamma(\cdot,t_1))$ consists of a single point. But since $\CC$ is proper, this contradicts Lemma \ref{propernesslemma}. Thus the lemma is proved. 
\end{proof}
We will deduce Theorem \ref{gaugetheorem} from Lemma \ref{betaswitcherlemma} together with the following 
\begin{lem}\label{jerrardslemma}
Let $\phi\colon \RR^2 \to \EE^{1+2}$, $\phi(s,t)=(t,\gamma(s,t))$ be an evolution by isothermal gauge for a $C^1\times C^0$ initial data $(\CC,V)$, and let $\beta$ be as in \eqref{beta}. Suppose there exists $t_*\in \RR$ such that $\sin\left(\frac{\beta(\cdot,t_*)}{2}\right)$ takes both positive and negative values on an interval $[s_1,s_2]$. Then for any $\zeta>0$, there is an open interval $I\subseteq (t_*-\zeta, t_*+\zeta)$, such that for all $t\in I$, either $\gamma([s_1,s_2],t)$ is not a $C^1$ immersed curve, or $\gamma([s_1,s_2],t)$ is a $C^1$ immersed curve but $U(\cdot,t)=\nicefrac{\gamma_s(\cdot,t)}{|\gamma_s(\cdot,t)|}$ admits no continuous extension to a unit tangent vector field along $\gamma(\cdot,t)$ on $[s_1,s_2]$.
\end{lem}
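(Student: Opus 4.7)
The plan is first to use continuity to propagate the two-sign hypothesis from $t=t_*$ to an open interval $I$, and then, for each $t\in I$, to analyze the local structure of $\gamma(\cdot,t)$ near the first sign change of $\sin(\beta(\cdot,t)/2)$ in $[s_1,s_2]$, exploiting the identity $\gamma_s(s,t)=\sin(\beta(s,t)/2)\,e(s,t)$ implicit in \eqref{tangentidentity}--\eqref{eidentity}. The aim is a dichotomy: either the image $\gamma([s_1,s_2],t)$ develops a cusp (and hence fails to be $C^1$ immersed), or else the one-sided limits of $U(\cdot,t)=\sgn(\sin(\beta(\cdot,t)/2))\,e(\cdot,t)$ disagree at some singular point, precluding a continuous unit-tangent extension of $U$.

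Since the initial data $(\CC,V)$ is $C^1\times C^0$, the lifts $\alpha_\pm$ are continuous, and hence $\beta(s,t)=\alpha_+(s+t)-\alpha_-(s-t)$ is jointly continuous in $(s,t)$. First I would pick $s_a<s_b$ in $[s_1,s_2]$ with $\sin(\beta(s_a,t_*)/2)>0$ and $\sin(\beta(s_b,t_*)/2)<0$; by continuity, both strict inequalities persist on some open interval $I_0\subseteq(t_*-\zeta,t_*+\zeta)$ containing $t_*$. For each $t\in I_0$ I set $\sigma(t):=\sup\{s\in[s_a,s_b]:\sin(\beta(s',t)/2)>0 \text{ for every } s'\in[s_a,s]\}$ and $\sigma'(t):=\inf\{s\in[\sigma(t),s_b]:\sin(\beta(s,t)/2)<0\}$; both are zeros of $\sin(\beta(\cdot,t)/2)$, with $\sin(\beta/2)>0$ just to the left of $\sigma(t)$ and $\sin(\beta(s,t)/2)<0$ at points arbitrarily close to $\sigma'(t)$ on the right.

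The key geometric step is an elementary Riemann-integral estimate from $\gamma_s=\sin(\beta/2)\,e$ together with continuity of $e$: both as $s\nearrow \sigma'(t)$ along $\sin(\beta/2)>0$ and as $s\searrow \sigma'(t)$ along $\sin(\beta/2)<0$, one has $\gamma(s,t)-\gamma(\sigma'(t),t)\sim -c(s)\,e(\sigma'(t),t)$ with $c(s)>0$. In the non-degenerate configuration where $\sigma(t)=\sigma'(t)$ or $\sin(\beta/2)>0$ throughout $(\sigma(t),\sigma'(t))$, the incoming and outgoing arcs at $p':=\gamma(\sigma'(t),t)$ thus lie on a single side of $p'$, so the image has a cusp at $p'$ and fails to be $C^1$ immersed; equivalently in the sub-case $\sigma(t)=\sigma'(t)$ the one-sided limits of $U(\cdot,t)$ at $\sigma'(t)$ are the distinct unit vectors $\pm e(\sigma'(t),t)$, forbidding a continuous extension of $U$.

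The hard part will be the degenerate sub-case $\sigma(t)<\sigma'(t)$ with $\sin(\beta(\cdot,t)/2)\equiv 0$ on $[\sigma(t),\sigma'(t)]$, so that $\gamma(\cdot,t)$ is constant at a single point $p$ there. The analogous estimate at the endpoints places the incoming arc (from $s<\sigma(t)$) on the $-e(\sigma(t),t)$ side of $p$ and the outgoing arc (from $s>\sigma'(t)$) on the $-e(\sigma'(t),t)$ side, and the image is $C^1$ immersed at $p$ (with $U$ admitting a continuous extension) precisely under the codimension-one relation $e(\sigma(t),t)=-e(\sigma'(t),t)$, equivalent via \eqref{eidentity} to $\alpha_+(\sigma'(t)+t)-\alpha_+(\sigma(t)+t)\in \pi+2\pi\ZZ$. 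To conclude, if this degenerate configuration holds only on a nowhere-dense set of $t\in I_0$, its complement is open and dense, so contains an open sub-interval $I$ on which the cusp argument of the preceding paragraph applies; otherwise it holds on an open sub-interval $I\subseteq I_0$, forcing $a_+(s+t)=a_-(s-t)$ on a two-dimensional region, whence $a_\pm$ are constant on matching intervals and $\alpha_+$ is constant on an interval containing both $\sigma(t)+t$ and $\sigma'(t)+t$, so $\alpha_+(\sigma'(t)+t)-\alpha_+(\sigma(t)+t)=0\notin \pi+2\pi\ZZ$ and $e(\sigma(t),t)=e(\sigma'(t),t)$, which places both arcs on the same side of $p$ and again produces a cusp. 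Either way one extracts the desired open interval $I$; ruling out the degenerate ``pause-and-reverse'' configuration on open $t$-intervals is the technical heart of the argument.
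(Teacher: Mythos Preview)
Your overall strategy---locate, for each time $t$, the first sign change of $\sin(\beta/2)$ and show the resulting cusp/extension dichotomy---matches the paper's, and you correctly identify the essential obstruction: the degenerate ``pause-and-reverse'' configuration where $\gamma(\cdot,t)$ collapses on $[\sigma(t),\sigma'(t)]$ and the two arcs align via $e(\sigma(t),t)=-e(\sigma'(t),t)$. The gap is in your final step. You assert that if this degenerate-regular configuration persists for $t$ in an open interval, then $a_+(s+t)=a_-(s-t)$ holds on a two-dimensional region, forcing $a_\pm$ (hence $\alpha_+$) to be constant on an interval containing both $\sigma(t)+t$ and $\sigma'(t)+t$. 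But the union $\bigcup_t\{t\}\times[\sigma(t),\sigma'(t)]$ need not have interior: the endpoints $\sigma(t),\sigma'(t)$ are not shown to vary continuously in $t$, and there is no uniform lower bound on $\sigma'(t)-\sigma(t)$. Even granting an open patch, local constancy of $\alpha_+$ on a small piece does not give constancy on the whole of $[\sigma(t)+t,\sigma'(t)+t]$. (A smaller issue: your split on $(\sigma,\sigma')$ into ``$\sin(\beta/2)>0$ throughout'' versus ``$\sin(\beta/2)\equiv 0$'' omits the mixed case; the paper handles this by taking $r_1$ to be the left endpoint of the \emph{maximal} interval ending at $r_2$ on which $\beta\equiv 0$.)

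The paper avoids the two-dimensional argument entirely by perturbing from a \emph{single} degenerate-regular time $t_0$. The condition $e(r_1,t_0)=-e(r_2,t_0)$, combined with $\beta\equiv 0$ on $[r_1,r_2]$, gives $\alpha_+(r_2+t_0)-\alpha_+(r_1+t_0)\in\pi+2\pi\ZZ$, so $\alpha_+(\cdot+t_0)$ must vary by at least $\pi$ on $[r_1,r_2]$. Fixing $\eps<\pi$, one locates the first point $r_4\in[r_1,r_2]$ where $|\alpha_+(\cdot+t_0)-\alpha_+(r_1+t_0)|$ reaches $\eps/4$, and then uses an integral mean-value trick: for each small $\tau>0$, the telescoping identity
\[
\int_{r_1}^{r_4-\tau}\bigl(\alpha_+(s+\tau+t_0)-\alpha_+(s+t_0)\bigr)\,ds
=\int_{r_4-\tau}^{r_4}\alpha_+(s+t_0)\,ds-\int_{r_1}^{r_1+\tau}\alpha_+(s+t_0)\,ds
\]
has a definite sign, so some $s(\tau)\in[r_1,r_4]$ satisfies $\alpha_+(s(\tau)+\tau+t_0)\neq\alpha_+(s(\tau)+t_0)$, which (since $\beta(s(\tau),t_0)=0$) yields $\beta(s(\tau)\pm\tau/2,\,t_0\pm\tau/2)\neq 0$ with a known sign. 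One thereby produces, for every small $\tau$, a sub-interval $J$ on which $\beta(\cdot,t_0\pm\tau/2)$ takes both signs while $\alpha_+$ varies by less than $\eps<\pi$; this rules out the odd-$\pi$ alignment for any candidate $[r_1',r_2']\subseteq J$, hence excludes the degenerate-regular case on a one-sided open interval of times near $t_0$. This perturbation argument is the technical heart you are missing.
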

\begin{proof} We will follow the proof of \cite[Theorem 5.1(iii)]{jerrardetal}. Let us assume that $[s_1,s_2]$ is such that $\beta(s_1,t_*)<0$ and $\beta(s_2,t_*)>0$, since all other cases may be treated similarly. By continuity there exists $\delta_0\in(0,\zeta]$ such that $\beta(s_1,t)<0$ and $\beta(s_2,t)>0$ for all $t\in(t_*-\delta_0,t_*+\delta_0)$.

Suppose for some $t_0\in(t_*-\delta_0,t_*+\delta_0)$, we have that $\gamma([s_1,s_2],t_0)$ is a $C^1$ immersed curve and $U(\cdot,t_0)$ extends to a continuous unit vector field $\hat{U}(\cdot,t_0)$ along $\gamma(\cdot,t_0)$ on the interval $[s_1,s_2]$ (we will see such a situation in Example \ref{cuspswitcherexample}). Define
\eqq{\label{r1r2}r_2 &= \sup \{ \hat{s}\in[s_1,s_2] \colon \beta(s,t_0) \leq 0 \quad \text{for all} \hskip2pt s\in[s_1,\hat{s}] \} \\
r_1 &= \inf \{ \hat{s}\in[s_1,r_2] \colon \beta(s,t_0) = 0 \quad \text{for all} \hskip2pt s\in[\hat{s},r_2] \},}
then
\eq{\label{betazero}\beta(s,t_0)=0 \quad \text{for all} \hskip2pt s\in[r_1,r_2]} 
and $\beta$ takes both positive and negative values in every neighbourhood of $[r_1,r_2]$. 
\begin{figure}[h]
\centering
\includegraphics[width=.7\textwidth]{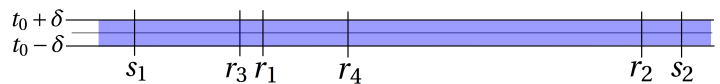}
\caption{The terms in the proof of Lemma \ref{jerrardslemma}.}\label{analysis_domain}
\end{figure}

We claim that 
\eq{\label{curveregularitycriterion}\alpha_+(r_1+t_0)=\alpha_+(r_2+t_0) + m \pi \quad \text{for some odd integer} \hskip2pt m.} 
To show \eqref{curveregularitycriterion}, note that since $\gamma(s,t_0)=\gamma(r_1,t_0)$ for all $s\in[r_1,r_2]$, it follows that $\hat{U}(r_1,t_0)=\hat{U}(r_2,t_0)$. Take sequences $\{x_n\}$ and $\{y_n\}$ with $x_n \to r_1$, $\beta(x_n,t_0)<0$, and $y_n \to r_2$,  $\beta(y_n,t_0)>0$ (which is possible from the definitions of $r_1$ and $r_2$). Then from \eqref{tangentidentity}
\al{\hat{U}(r_1,t_0) &= \lim\limits_{x_n\to r_1} \left\{ \sgn \left( \sin \frac{\beta(x_n,t_0)}{2} \right)  \right\} e(r_1,t_0) \\
 \hat{U}(r_2,t_0) &= \lim\limits_{y_n\to r_2} \left\{ \sgn \left( \sin \frac{\beta(y_n,t_0)}{2} \right)  \right\} e(r_2,t_0) \\ 
 &= -\lim\limits_{x_n\to r_1} \left\{ \sgn \left( \sin \frac{\beta(x_n,t_0)}{2} \right)  \right\} e(r_2,t_0),}
so $e(r_1,t_*)=-e(r_2,t_*)$ from which \eqref{curveregularitycriterion} follows from \eqref{eidentity} and \eqref{betazero}.

Geometrically, \eqref{betazero} and \eqref{curveregularitycriterion} amount to the statement that $\alpha_+(s+t_0)$ and $\alpha_-(s-t_0)$ (which we recall represent the null directions along the initial curve) are identically equal for $s\in[r_1,r_2]$, and undergo a rotation by a non-trivial multiple of $\pi$ as $s$ varies from $r_1$ to $r_2$. We will now show that this situation will be lost after a small perturbation of $t_0$. More precisely, we will show that for any $\eps>0$, there is an open interval $I$, either of the form $I=(t_0,t_0+\delta)$ or $I=(t_0-\delta,t_0)$ for some $\delta>0$, such that for each $t\in I$, there is an interval $J=J(t)\subseteq[s_1,s_2]$ such that $\beta(\cdot,t)$ takes both positive and negative values on $J$ and $|\alpha_+(w_1+t)-\alpha_+(w_2+t)|<\eps$ for all $w_1,w_2\in J$. Taking $\eps$ smaller than $\pi$, this will imply that condition \eqref{curveregularitycriterion} with $t_0$ replaced by $t$ cannot hold for any $r_1,r_2\in J$, so we will conclude that for each $t\in I$, the unit tangent $U(\cdot,t)$ admits no continuous extension to a unit tangent map, from which the conclusion of the lemma will follow.

Fix $\eps>0$. By \eqref{r1r2} and continuity of $\alpha_+$ there exists $r_3\in [s_1,r_1)$ such that $\beta(r_3,t_0)<0$ and 
\eq{\label{smallspacestep}|\alpha_+(s+t_0)-\alpha_+(r_1+t_0)|<\frac{\eps}{4} \quad \text{for}\hskip3pt s\in[r_3,r_1].}
Take $\delta>0$ so that 
\eq{\label{lefthandbelow} \beta(r_3,t) &< 0 \quad \text{for}\hskip3pt t\in [t_0-\delta,t_0+\delta].}
By the uniform continuity of $\alpha_+$ on compact sets, by refining $\delta>0$ to a smaller number as necessary, we may ensure
\eq{
\label{smalltimestep} |\alpha_+(s+t) - \alpha_+(s+t_0)|&< \frac{\eps}{4} \quad \text{for}\hskip3pt s\in[s_1,s_2], t\in[t_0-\delta,t_0+\delta].}
By \eqref{curveregularitycriterion}, we can define
\eq{\label{r3identity}r_4 = \inf \left\{ s\in[r_1,r_2] \colon |\alpha_+(s+t_0)-\alpha_+(r_1+t_0)| = \frac{\eps}{4}\right\}.}

We will first treat the case where $\alpha_+(r_4+t_0)=\alpha_+(r_1+t_0)+\frac{\eps}{4}$. By refining $\delta>0$ to be smaller as neccesary, we may assume that $\alpha_+(w_2+t_0)>\alpha_+(w_1+t_0)$ provided $w_1\in[r_1,r_1+\delta]$ and $w_2\in[r_4-\delta,r_4]$. Then for each $\tau\in(0,\delta]$, we have
\al{\int_{r_1}^{r_4-\tau} (\alpha_+(s+\tau+t_0)-\alpha_+(s+t_0)) ds &= \int_{r_4-\tau}^{r_4} \alpha_+(s+t_0) ds \\
 & \hskip25pt - \int^{r_1+\tau}_{r_1} \alpha_+(s+t_0) ds \\
&>0}
which shows that there exists an $s(\tau)\in[r_1,r_4-\tau]$ such that $\alpha_+(s(\tau)+\tau+t_0)>\alpha_+(s(\tau)+t_0)$. We then see 
\eqq{\label{righthandabove}\beta\left(s(\tau)+\frac{\tau}{2},t_0+\frac{\tau}{2}\right)&=\alpha_+(s(\tau)+\tau+t_0)-\alpha_+(s(\tau)-t_0) \\
&>\alpha_+(s(\tau)+t_0)-\alpha_+(s(\tau)-t_0)=\beta(s(\tau),t_0) \\
&\stackrel{\eqref{betazero}}{=}0.}
Then for all $\tau\in(0,\delta]$, by \eqref{lefthandbelow} and \eqref{righthandabove} $\beta(\cdot,t_0+\frac{\tau}{2})$ takes both positive and negative values on $J=J(t_0+\frac{\tau}{2}):=[r_3,s(\tau)+\frac{\tau}{2}]$. On the other hand, for all $\omega_1,\omega_2\in J$ we have
\al{|\alpha_+(\omega_1+t_0+\frac{\tau}{2})-\alpha_+(\omega_2+t_0+\frac{\tau}{2})|&\leq |\alpha_+(\omega_1+t_0+\frac{\tau}{2})-\alpha_+(\omega_1+t_0)| \\ + |\alpha_+(\omega_2+t_0+\frac{\tau}{2})-\alpha_+(\omega_2+&t_0)| +|\alpha_+(\omega_1+t_0)-\alpha_+(r_1+t_0)| \\ &+ |\alpha_+(\omega_2+t_0)-\alpha_+(r_1+t_0)|}
and since the first two terms on the right hand side of the above inequality are bounded by  \eqref{smalltimestep} and each of the last two terms is bounded by \eqref{smallspacestep} and \eqref{r3identity}, this gives $|\alpha_+(\omega_1+t_0+\frac{\tau}{2})-\alpha_+(\omega_2+t_0+\frac{\tau}{2})|<\eps$ which is what we set out to show.

Next we treat the case where $\alpha_+(r_4+t_0)=\alpha_+(r_1+t_0)-\frac{\eps}{4}$ by a similar argument. Choose $\delta>0$ so that $\alpha_+(w_1+t_0)>\alpha_+(w_2+t_0)$ provided $w_1\in[r_1,r_1+\delta]$ and $w_2\in[r_4-\delta,r_4]$. Then, for all $\tau\in(0,\delta]$ there exists $s(\tau)\in[r_1+\tau,r_4]$ such that $\alpha_+(s(\tau)-\tau+t_0)>\alpha_+(s(\tau)+t_0)$. In this case,
\eqq{\label{righthandabove2}\beta\left(s(\tau)-\frac{\tau}{2},t_0-\frac{\tau}{2}\right) &= \alpha_+(s(\tau)-\tau+t_0) - \alpha_-(s(\tau)-t_0) \\
 &> \alpha_+(s(\tau)+t_0)-\alpha_-(s(\tau)-t_0) = \beta(s(\tau),t_0) \\
 &\stackrel{\eqref{betazero}}{=} 0 .}

Then for all $\tau \in (0, \delta]$, by \eqref{lefthandbelow} and \eqref{righthandabove2} $\beta(\cdot, t_0-\frac{\tau}{2})$ takes both positive and negative values on $J=[r_3, s(\tau)-\frac{\tau}{2}]$, whilst for all $w_1,w_2 \in J$, arguing as above by \eqref{smallspacestep}, \eqref{smalltimestep} and \eqref{r3identity} we have $|\alpha_+(w_1+t_0-\frac{\tau}{2})-\alpha_+(w_2+t_0-\frac{\tau}{2})|<\eps$ which is what we set out to show. This completes the proof.
\end{proof}
The interval $I$ in Lemma \ref{jerrardslemma} may be chosen to be contained in any neighbourhood of the time $t_*$, but it is not always possible to choose an interval $I$ containing $t_*$. Indeed, it is possible that $\sin\left(\frac{\beta(\cdot,t_*)}{2}\right)$ takes both positive and negative values on an interval $[s_1,s_2]$ whilst $\gamma([s_1,s_2],t_*)$ is a $C^1$ immersed curve and $U(\cdot,t_*)$ admits a continuous extension to a unit tangent vector field along $\gamma(\cdot,t_*)$, as the following example illustrates.
\begin{ex}[Cusp reversal]\label{cuspswitcherexample}
Consider the $C^1$ initial curve defined by 
\al{c(s)= \begin{cases}
\left(s,-1\right) &\text{for} \hskip3pt s\in(-\infty,0] \\
\left(\sin s, -\cos s\right) &\text{for} \hskip3pt s\in (0,2\pi] \\
\left( \frac{1}{2} \sin 2s , \frac{-1}{2}(1+\cos 2s) \right) &\text{for} \hskip3pt s\in (2\pi, \frac{9\pi}{4}] \\
\left( \frac{1}{2}, -\frac{1}{2} + s - \frac{\pi}{4} \right) &\text{for} \hskip3pt s\in (\frac{9\pi}{4},\infty).
\end{cases}}
See Figure \ref{cuspswitcher}(a). Let $\phi(s,t)=(t,\gamma(s,t))$ be the evolution by isothermal gauge of the curve $\CC=(0,c)$ with initial velocity $V=(1,0,0)$. We have $\beta(s,\frac{\pi}{2})<0$ for $s\in[\frac{\pi}{2}-\eps,\frac{\pi}{2})$ and $\beta(s,\frac{\pi}{2})>0$ for $s\in(\frac{3\pi}{2},\frac{3\pi}{2}+\eps]$ for some $\eps>0$, whilst $\beta(s,\frac{\pi}{2})=0$ for $s\in[\frac{\pi}{2},\frac{3\pi}{2}]$. Moreover, $\lim_{s\to {\frac{\pi}{2}}^-} U(s,\frac{\pi}{2}) = \lim_{s\to {\frac{3\pi}{2}}^+} U(s,\frac{\pi}{2}) = (0,1)$. Thus $\gamma([\frac{\pi}{2}-\eps,\frac{3\pi}{2}+\eps],\frac{\pi}{2})$ is a $C^1$ immersed curve. See Figure \ref{cuspswitcher}(b). The numerical plot reveals some interesting geometry at the time $t_*=\frac{\pi}{2}$. We see that at this moment in time a cusp instantaneously reverses the direction of its axis, so that the spatial cross section is $C^1$ at $\phi(\frac{\pi}{2},\frac{\pi}{2})$. Although the spatial cross-section is regular at this point, the surface is not, and looks locally like a cone, with a pair of cusps tracing two ``cuts'' running down to the vertex. (One should be reminded that in this example $\frac{\pi}{2}$ is not the first time of singularity for the Cauchy evolution of $(\CC,V)$).
\begin{figure}[h]
    \centering
    	 \captionsetup{width=.9\linewidth}
    \begin{tabular}{cc}
    \begin{minipage}{0.45\textwidth}
        \centering
        \includegraphics[width=.9\textwidth]{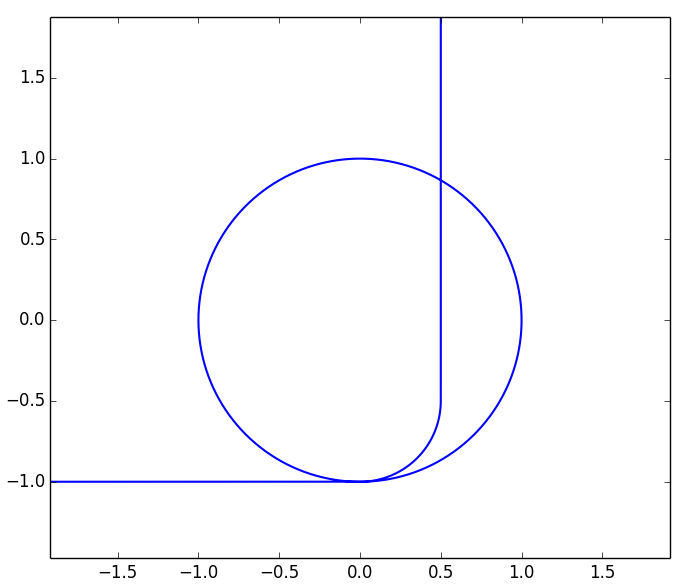}
            \end{minipage}\hfill
    &
    \begin{minipage}{0.45\textwidth}
        \centering
        \includegraphics[width=.9\textwidth]{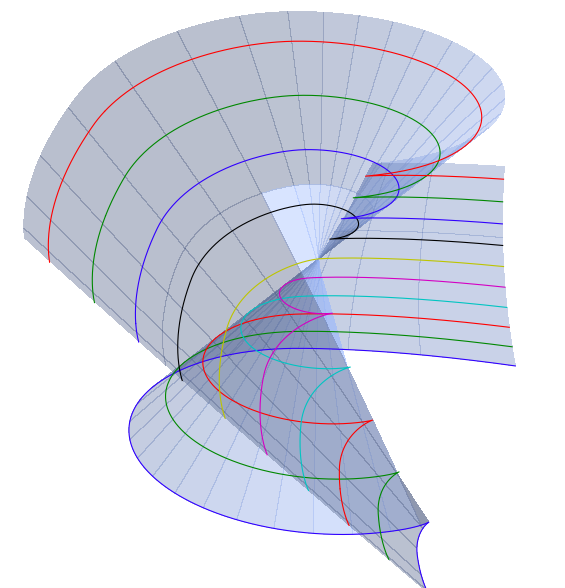}
    \end{minipage}\\
    (a) & (b)
    \end{tabular}
    \caption[An example of a `cusp reversal']{  (a) The initial curve of Example \ref{cuspswitcherexample}. (b) The evolution $\phi(s,t)$ of the curve in (a) by isothermal gauge, plotted for values $s\in[-2,10], t\in[-\pi,\pi]$. The coloured curves are $\{x^0 = \mathrm{constant}\}$ cross sections}
    \label{cuspswitcher}
\end{figure}
\end{ex}
\begin{proof}[Proof of Theorem \ref{gaugetheorem}:]
Letting $\phi(s,t)=(t,\gamma(s,t))$ be an evolution by isothermal gauge for a $C^1\times C^0$ initial data $(\CC,V)$, we are supposing that the image of the unit tangent along $\CC$ contains an arc of length $>\pi$, i.e.\ there exist $s_1, s_2\in \RR$ for which \eqref{sharpcondition} holds. By Lemma \ref{betaswitcherlemma} there exists a time $t_*\in \RR$ such that $\sin\frac{\beta(\cdot, t_*)}{2}$ takes both positive and negative values. By Lemma \ref{jerrardslemma} there exists an open interval $I$ such that for each $t\in I$ either $\Imm(\gamma(\cdot,t))$ is not a $C^1$ immersed curve or $\Imm(\gamma(\cdot,t))$ is a $C^1$ immersed curve but $U(\cdot,t)$ does not admit an extension to a continuous unit tangent vector field along $\gamma(\cdot, t)$. Theorem \ref{gaugetheorem} is proved.
\end{proof}
We conclude this section with an example where the set $\gamma([s_1,s_2],t_*)$ is a $C^1$ immersed curve, whilst $U(\cdot,t_*)$ admits no extension to a continuous unit tangent vector field along $\gamma(\cdot,t_*)$ on $[s_1,s_2]$ (thus $\gamma(\cdot, t_*)$ admits no monotone reparameterisation to a $C^1$ immersion).
\begin{ex}[Degenerate cusp singularities]\label{sheetingexample}
Consider the $C^1$ initial curve defined by
\al{c(s)= \begin{cases}
\left(- s,-1\right) &\text{for} \hskip3pt s\in(-\infty,0] \\
\left(-\sin s, -\cos s\right) &\text{for} \hskip3pt s\in (0, \frac{\pi}{2}] \\
\left(-2+\cos(s-\frac{\pi}{2}), -\sin (s-\frac{\pi}{2})\right) &\text{for} \hskip3pt s\in (\frac{\pi}{2},\pi] \\
\left(-2-\sin(s-\pi),2-\cos(s-\pi)\right) &\text{for} \hskip3pt s\in(\pi,2\pi]  \\
\left(-2+2\sin\frac{s-2\pi}{2},1+2\cos\frac{s-2\pi}{2}\right)  &\text{for} \hskip3pt s\in(2\pi,3\pi]  \\
\left(1-\cos(s-3\pi),1-\sin(s-3\pi)\right)   &\text{for} \hskip3pt s\in(3\pi,\frac{7\pi}{2}]  \\
\left(1+\sin(s-\frac{7\pi}{2}),-1+\cos(s-\frac{7\pi}{2})\right)   &\text{for} \hskip3pt s\in(\frac{7\pi}{2},4\pi]  \\
\left(2,-1-(s-4\pi)\right)     &\text{for} \hskip3pt s\in(4\pi,\infty).  \\
\end{cases}}
See Figure \ref{rivercurve}(a). Let $\phi(s,t)=(t,\gamma(s,t))$ be the evolution of $\CC(s)=(0,c(s))$ with initial velocity $V=(1,0,0)$. It may be seen that the curve $\gamma(s,\frac{3\pi}{2})=c(s+\frac{3\pi}{2})+c(s-\frac{3\pi}{2})$ will backtrack and retrace its steps twice, so that the map $s\mapsto U(s,\frac{3\pi}{2})=\nicefrac{\gamma_s(s,\frac{3\pi}{2})}{|\gamma_s(s,\frac{3\pi}{2})|}$ is discontinuous, whilst the image $\gamma([\frac{3\pi}{2},\frac{5\pi}{2}],\frac{3\pi}{2})$ is a $C^1$ curve. This phenomenon is illustrated in Figure \ref{rivercurve}(b). In this example, the degenerate behaviour is sandwiched between a pair of ordinary cusps which travel along $t=-s+2\pi, t>\pi$ and $t=s-\frac{3\pi}{2}, t>\frac{5\pi}{4}$, and the surface $\Sigma$ is not $C^1$.
\end{ex}
\begin{figure}[h]
    \centering
    	 \captionsetup{width=.9\linewidth}
    \begin{tabular}{cc}
    \begin{minipage}{0.45\textwidth}
        \centering
        \includegraphics[width=.7\textwidth]{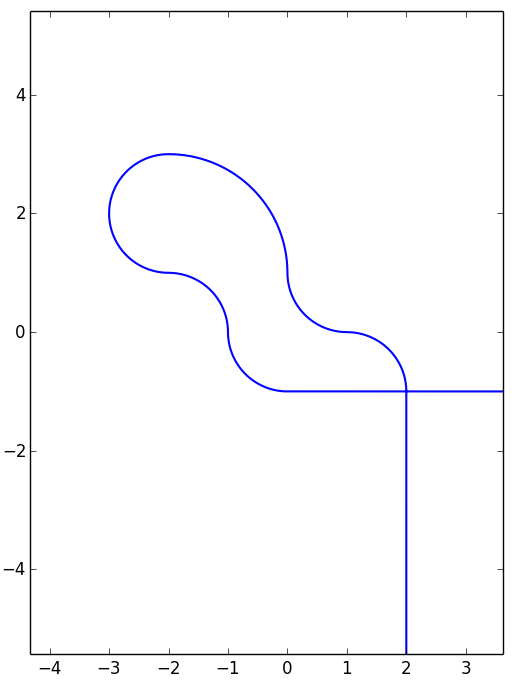}
            \end{minipage}\hfill
    &
    \begin{minipage}{0.45\textwidth}
        \centering
        \includegraphics[width=.9\textwidth]{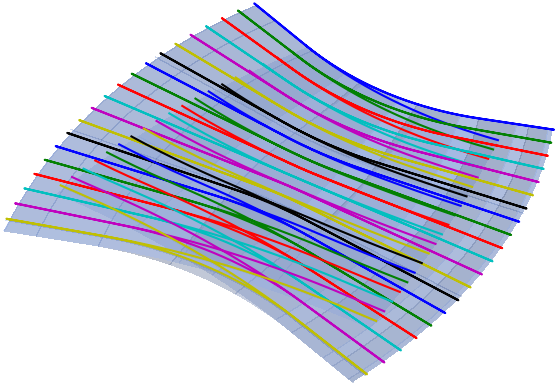}
    \end{minipage}\\
    (a) & (b)
    \end{tabular}
    \caption[An example of `sheeting']{  (a) The initial curve of Example \ref{sheetingexample}. (b) The evolution $\phi(s,t)$ of the curve in (a) by isothermal gauge, plotted for values $s\in [1.4\pi,2.6\pi], t\in[1.4\pi,1.6\pi]$. The coloured curves are $\{x^0 = \mathrm{constant}\}$ cross sections}
    \label{rivercurve}
\end{figure}
\bibliographystyle{plain}
\bibliography{embeddedness}

\begin{thebibliography}{10}

\bibitem{AAI}
P.~Allen, L.~Andersson, and J.~Isenberg.
\newblock Timelike minimal submanifolds of general co-dimension in {M}inkowski
  space time.
\newblock {\em J. Hyperbolic Differ. Equ.}, 3(4):691--700, 2006.

\bibitem{BMP}
Hajer Bahouri, Alaa Marachli, and Galina Perelman.
\newblock Blow-up dynamics for the hyperbolic vanishing mean curvature flow of
  surfaces asymptotic to a simons cone.
\newblock {\em Comptes Rendus Mathematique}, 357(10):778--783, October 2019.

\bibitem{BHNO}
G.\ Bellettini, J.\ Hoppe, M.\ Novaga, and G.\ Orlandi.
\newblock Closure and convexity results for closed relativistic strings.
\newblock {\em Complex Anal. Oper. Theory}, 4(3):473--496, 2010.

\bibitem{lorentzian_varifolds}
G.~Bellettini, M.~Novaga, and G.~Orlandi.
\newblock Lorentzian varifolds and applications to relativistic strings.
\newblock {\em Indiana Univ. Math. J.}, 61(6):2251--2310, 2012.

\bibitem{brendle}
S.\ Brendle.
\newblock Hypersurfaces in {M}inkowski space with vanishing mean curvature.
\newblock {\em Comm. Pure Appl. Math.}, 55(10):1249--1279, 2002.

\bibitem{brenier}
Y.~Brenier.
\newblock Non relativistic strings may be approximated by relativistic strings.
\newblock {\em Methods Appl. Anal.}, 12(2):153--167, 2005.

\bibitem{codandlev}
E.~A. Coddington and N.~Levinson.
\newblock {\em Theory of Ordinary Differential Equations}.
\newblock McGraw-Hill Book Company, 1955.

\bibitem{DKSW}
R.~Donninger, J.~Krieger, J.~Szeftel, and W.~W.~Y. Wong.
\newblock Codimension one stability of the catenoid under the vanishing mean
  curvature flow in {M}inkowski space.
\newblock {\em Duke Math. J.}, 165(4):723--791, 2016.

\bibitem{EH}
J.~Eggers and J.~Hoppe.
\newblock Singularity formation for time-like extremal hypersurfaces.
\newblock {\em Phys. Lett.}, B680(3):274--278, 2009.

\bibitem{EHHS}
J.~Eggers, J.~Hoppe, M.~Hynek, and N.~Suramlishvili.
\newblock Singularities of relativistic membranes.
\newblock {\em Geom. Flows}, 1(1):17--33, 2015.

\bibitem{jerrardetal}
R.~L. Jerrard, M.~Novaga, and G.~Orlandi.
\newblock On the regularity of timelike extremal surfaces.
\newblock {\em Commun. Contemp. Math.}, 17(1):1450048, 19, 2015.

\bibitem{KT}
T.~Kibble and N.~Turok.
\newblock Self intersection of cosmic strings.
\newblock {\em Phys. Lett. B 116 141-3}, 1982.

\bibitem{kulkarni}
R.~S. Kulkarni.
\newblock An analogue of the {R}iemann mapping theorem for {L}orentz metrics.
\newblock {\em Proc. R. Soc. Lond.}, A401(1820):117--130, 1985.

\bibitem{lindblad}
H.\ Lindblad.
\newblock A remark on global existence for small initial data of the minimal
  surface equation in {M}inkowski space time.
\newblock {\em Proceedings of the American Mathematical Society}, 2003.

\bibitem{LYY}
G.~K. Luli, S.~Yang, and P.~Yu.
\newblock On one-dimension semi-linear wave equations with null conditions.
\newblock {\em Adv. Math.}, 329:174--188, 2018.

\bibitem{NT}
L.~Nguyen and G.~Tian.
\newblock On smoothness of timelike maximal cylinders in three-dimensional
  vacuum spacetimes.
\newblock {\em Classical Quantum Gravity}, 30(16):165010, 26, 2013.

\bibitem{weinsteinbook}
T.\ Weinstein.
\newblock {\em An {I}ntroduction to {L}orentz {S}urfaces}.
\newblock Walter de Gruyter, 1996.

\bibitem{Wongsmalldata}
W.~W.~Y. Wong.
\newblock Global existence for the minimal surface equation on {$\mathbb{
  R}^{1,1}$}.
\newblock {\em Proc. Amer. Math. Soc.}, B4:47--52, 2017.

\bibitem{WWong}
W.~W.~Y. Wong.
\newblock Singularities of axially symmetric time-like minimal submanifolds in
  {M}inkowski space.
\newblock {\em J. Hyperbolic Differ. Eq.}, 15(1):1--13, 2018.

\bibitem{string}
B.\ Zwiebach.
\newblock {\em A First Course in String Theory}.
\newblock Cambridge University Press, 2003.

\end{thebibliography}
\end{document}